\DeclareMathSymbol{\twoheadrightarrow} {\mathrel}{AMSa}{"10}
\def\Q{{\mathbf Q}}
        \def\CC{{\mathfrak C}}
\def\Z{{\mathbf Z}}
\def\C{{\mathbf C}}
\def\H{{\mathrm H}}
\def\ST{{\mathbf S}}
\def\Sn{{\mathbf S}_n}
\def\An{{\mathbf A}_n}
\def\Gal{\mathrm{Gal}}
\def\Lie{\mathrm{Lie}}
\def\MT{\mathrm{MT}}
\def\mt{\mathrm{mt}}
                      \def\Hdg{\mathrm{Hdg}}
                      \def\hdg{\mathrm{hdg}}
\def\End{\mathrm{End}}
\def\Aut{\mathrm{Aut}}
\def\Hom{\mathrm{Hom}}
\def\I{\mathrm{Id}}
                                    \def\TT{\mathrm{T}}
\def\GL{\mathrm{GL}}
                                                \def\U{\mathrm{U}}
\def\SL{\mathrm{SL}}
                                    \def\Tr{\mathrm{Tr}}
                \def\sL{\mathfrak{sl}}
        \def\K_a{\bar{K}}
\def\E{\mathrm{E}}
\def\dim{\mathrm{dim}}
\def\c{{\mathfrak c}}
\def\q{{\mathfrak q}}
\def\G{{\mathbf G}}
\def\K{{\mathcal{K}}}
\def\m{{\mathfrak m}}
\def\ZZ{{\mathfrak Z}}
                           \def\f{{\mathfrak f}}
                           \def\cc{{\mathfrak c}}
                           \def\h{{\mathbf h}}
\def\E{{\mathcal E}}
\newcommand{\ra}{\rightarrow}
\newcommand{\gp}{\left(\Z/q\Z\right)^{\times}}
\newcommand{\xs}{\chi^{\scriptscriptstyle{\bigstar}}}
\newcommand{\bxs}{\overline{\chi}^{\scriptscriptstyle{\bigstar}}}
\newcommand{\abs}[1]{\lvert#1\rvert}
\newtheorem{thm}{Theorem}[section]
\newtheorem{lem}[thm]{Lemma}
\newtheorem{cor}[thm]{Corollary}
\newtheorem{prop}[thm]{Proposition}
\theoremstyle{definition}
\newtheorem{defn}[thm]{Definition}
\newtheorem{ex}[thm]{Example}
\newtheorem{exs}[thm]{Examples}
\newtheorem{rem}[thm]{Remark}
\newtheorem{rems}[thm]{Remarks}
\newtheorem{sect}[thm]{}
\title[Hodge groups of superelliptic jacobians]{Centers of Hodge groups of superelliptic jacobians}
\author{Jiangwei Xue}
\address{Department of Mathematics, Pennsylvania
State University, University Park, PA 16802, USA}
\email{xue\_j\char`\@math.psu.edu}
\author{Yuri G. Zarhin}
\address{Department of Mathematics, Pennsylvania
State University, University Park, PA 16802, USA}
\email{zarhin\char`\@math.psu.edu}
\begin{document}

\maketitle

\section{Introduction}
\label{intro}

Let $\C$ be the field of complex numbers. If $z\in \C$ then we write $\bar{z}$
for its complex-conjugate and denote by $\iota: \C \to \C$ the corresponding
element of the group $\Aut(\C)$ of automorphisms of $\C$. We write
$\bar{\Q}\subset\C$ for the algebraic closure of $\Q$ in $\C$. It is well-known
that the subfield $\bar{\Q}$ is $\Aut(\C)$-stable and the natural homomorphism
$$\Aut(\C)\to \Gal(\bar{\Q}/\Q)$$
is surjective. If $W$ is a $\Q$-vector space, $\Q$-algebra or $\Q$-Lie algebra
then we write $W_{\C}$ for the corresponding $\C$-vector space (respectively,
$\C$-algebra or $\C$-Lie algebra) $W\otimes_{\Q}\C$.

Let $f(x)\in \C[x]$ be a polynomial of degree $n\ge 2$  without multiple roots.
 Suppose that  $p$ is a prime that does not divide $n$ and
 a positive integer $q=p^r$ is a power of $p$. As usual, $\varphi(q)=(p-1)p^{r-1}$ denotes the Euler function.
Let us fix a primitive $q$th root of unity
 $\zeta_q\in\C$.
 We write $C_{f,q}$
for the superelliptic curve $y^q=f(x)$ and $J(C_{f,q})$ for its jacobian.
Clearly, $J(C_{f,q})$ is an abelian variety  and
$$\dim(J(C_{f,q}))=\frac{(n-1)(q-1)}{2}.$$
The periodic automorphism $(x,y)\mapsto (x,\zeta_q y)$ of $C_{f,q}$ induces by
Albanese functoriality the periodic automorphism of $J(C_{f,q})$ that we denote
by $\delta_q$. It is known \cite{SPoonen, ZarhinM} that $\delta_q$ gives rise
to an embedding of the product $\prod_{i=1}^r\Q(\zeta_{p^i})$ of cyclotomic
fields into the endomorphism algebra $\End^0(J(C_{f,q}))$ of $J(C_{f,q})$. (If
$q=p$ then we actually get an embedding $\Z[\delta_p] \hookrightarrow
\End(J(C_{f,p}))$ that sends $\zeta_p$ to $\delta_p$.) More precisely, if $q\ne
p$ then the map $(x,y) \to (x,y^p)$ defines the map of curves $C_{f,q}\to
C_{f,q/p}$, which induces (by Albanese functoriality) the surjective
homomorphism $J(C_{f,q})\to J(C_{f,q/p})$
 of complex abelian varieties; we write $J^{(f,q)}$ for the identity component of its kernel.
 (If $q=p$ then we put $J^{(f,p)}=J(C_{f,p})$.) One may check \cite{ZarhinM}  that
  $J(C_{f,q})$ is isogenous to the product $\prod_{i=1}^r J^{(f,p^i)}$ and $\delta_q$ gives rise to an embedding
  $$\Z[\zeta_q]\hookrightarrow \End(J^{(f,q)}).$$

 In a series of papers \cite{ZarhinMRL,ZarhinCrelle,ZarhinCamb,ZarhinM,ZarhinMZ2}, one of the authors (Y.Z.) was able
to prove that
$$\End(J^{(f,q)})=\Z[\zeta_q], \ \End^{0}(J^{(f,q)})=\Q(\zeta_q)$$
 assuming that $n \ge 5$ and there exists a subfield $K\subset\C$ such that all
 the coefficients of $f(x)$ lie in $K$ and the Galois group of $f(x)$ over $K$
 is either the full symmetric group $\Sn$ or the  alternating group $\An$. In
 particular, $\End^0(J(C_{f,q})) \cong \prod_{i=1}^r\Q(\zeta_{p^i})$. (The same
 assertion holds true if $n=4$, the prime  $p$ is odd, $\zeta_q \in K$ and the Galois group is $\ST_4$.)

Our goal is to study the  (reductive $\Q$-algebraic connected) Hodge group
$\Hdg=\Hdg(J^{(f,q)})$ of $J^{(f,q)}$. Notice that when $q=2$ (i.e., in the
hyperelliptic case) this group was completely determined in \cite{ZarhinMMJ}
(when $f(x)$ has ``large" Galois group); in particular, in this case the Hodge
group is simple and the center of its Lie algebra is $\{0\}$. So, further we
assume that $q>2$ and therefore $\Q(\zeta_q)$ is a CM-field. So, if
$\End^{0}(J^{(f,q)})=\Q(\zeta_q)$ then (see Remark \ref{centerhdg} below) the
center $\cc^{0}$ of the $\Q$-Lie algebra $\hdg$ of $\Hdg(J^{(f,q)})$ lies in
$$\Q(\zeta_q)_{-}:=\{e\in \Q(\zeta_q)\mid \bar{e}=-e\}\subset \Q(\zeta_q).$$
(If $q=2$ then $\Q(\zeta_2)=\Q$ and  $\Q(\zeta_2)_{-}=\{0\}$.) In particular,
its dimension does not exceed $\varphi(q)/2$; the equality holds if and only if
$q>2$ and $\cc^{0}$ coincides with $\Q(\zeta_q)_{-}$.

%%%%%%%new stuff   July 6%%%%%%%%%%%%
Let
$$\Q(\zeta_q)^{+}:=\{e\in \Q(\zeta_q)\mid \bar{e}=e\}\subset \Q(\zeta_q)$$
be the maximal totally real subfield of $§\Q(\zeta_q)$. If $q>2$ then
$[\Q(\zeta_q)^{+}:\Q]=\varphi(q)/2$. We write $R_{\Q(\zeta_q)}\G_m$ and
$R_{\Q(\zeta_q)^{+}}\G_m$ for the algebraic $\Q$-tori obtained by the Weil
restriction of scalars of the multiplicative group $\G_m$ to $\Q$ from
$\Q(\zeta_q)$ and $\Q(\zeta_q)^{+}$ respectively. The norm map $\Q(\zeta_q)
\to\Q(\zeta_q)^{+}$ induces the natural homomorphism of algebraic $\Q$-tori and
 we denote by $\U_q=\TT_{\Q(\zeta_q)}$ its kernel, i.e., the corresponding
{\sl norm torus} \cite{Voskresenskii}.  It is well known that $\U_q$ is an
algebraic $\Q$-torus (in particular, it is connected) and
$$\U_q(\Q)=\{e\in \Q(\zeta_q)\mid \bar{e}e=1\}\subset \Q(\zeta_q).$$  The embedding
$\Q(\zeta_q) \hookrightarrow \End^{0}(J^{(f,q)})$ allows us to identify
$\Q(\zeta_q)$ with a certain $\Q$-subalgebra of $\End_{\Q}(\H^1(J^{(f,q)}))$ and
consider $R_{\Q(\zeta_q)}\G_m$ and therefore $\U_q$ as certain $\Q$-algebraic
subgroups of the general linear group $\GL(\H^1(J^{(f,q)},\Q))$ over $\Q$. Then
the $\Q$-Lie algebras of $R_{\Q(\zeta_q)}\G_m$ and $\U_q$, viewed as $\Q$-Lie
subalgebras of $\End_{\Q}(\H^1(J^{(f,q)}))$, coincide with $\Q(\zeta_q)$ and
$\Q(\zeta_q)_{-}$ respectively.

%%%%%%%%%%%%%%%%%stuff added Nov. 16%%%%%%%%%%%%%%%%%%%%%%
Recall that $J^{(f,q)}$ is an abelian subvariety of the jacobian $J(C_{f,q})$
and consider the ($\delta_q$-invariant) polarization $\lambda_r$ on $J^{(f,q)}$
induced by the canonical principal polarization on $J(C_{f,q})$. The
polarization $\lambda_r$ gives rise to a certain $\delta_q$-invariant
nondegenerate
  alternating
  $\Q$-bilinear form
  $$\psi_r:\H_1(J^{(f,q)},\Q) \times \H_1(J^{(f,q)},\Q) \to \Q$$
   (This form is the imaginary part of the {\sl Riemann form} of
  $\lambda_r$ \cite{Mumford,Ribet3}.) The $\delta_q$-invariance implies that
  $\psi_r(ex,y)=\psi_r(x,\bar{e}y) \ \forall e \in \Q(\zeta_q); \ x,y\in
  \H_1(J^{(f,q)},\Q)$.
  If $q>2$ then we choose a nonzero element $\beta_r \in \Q(\zeta_q)_{-}$  and a standard
construction (see, for instance, \cite[p. 531]{Ribet3}) gives us
   a nondegenerate Hermitian $\Q(\zeta_q)$-sesquilinear form
  $$\phi_r: \H_1(J^{(f,q)},\Q) \times \H_1(J^{(f,q)},\Q) \to \Q(\zeta_q)$$
  such that $\Tr_{\Q(\zeta_q))/\Q}(\beta_r\phi_r)=\psi_r$. We write $\U(\H_1(J^{(f,q)}),\Q),\phi_r)$
  for the unitary
group of $\phi_r$,
%of the $E^{p,i}$-vector space $\H_1(J^{(f,q)},\Q)$,
viewed as an algebraic (reductive) $\Q$-subgroup of $\GL(\H_1(J^{(f,p^i)},\Q))$
(via Weil's restriction of scalars from $\Q(\zeta_q)^{+}$ to $\Q$ (ibid). Then
the center of $\U(\H_1(J^{(f,q)}),\Q),\phi_r)$ coincides with $\U_q$.

Since the Hodge group of $J^{(f,q)}$ respects the polarization and commutes
with endomorphisms of $J^{(f,q)}$,
$$\Hdg(J^{(f,q)})\subset \U(\H_1(J^{(f,q)},\Q),\phi_r).$$
Recall that the centralizer of $\Hdg(J^{(f,q)})$ in
$\End_{\Q}(\H_1(J^{(f,q)},\Q))$ coincides with $\End^0(J^{(f,q)})$. This
implies that if $\End^0(J^{(f,q)})$ coincides with $\Q(\zeta_q)$ then
  the center of $\Hdg(J^{(f,q)})$ lies in $\U_q$.
%%%%%%%%%%%%%%%%%%%%%%%%%%

%%%%%%%%%%%%%%%%%%%%%%%%%%

\begin{rem}
  Let $\Hdg^{ss}=[\Hdg,\Hdg]$ be the derived subgroup of $\Hdg$. Let
  $\ZZ$ be the center of $\Hdg$ and $\ZZ^0$ the identity component of
  $\ZZ$.  Since the Hodge group is connected reductive, $\Hdg^{ss}$
  is a semisimple connected algebraic $\Q$-group, $\ZZ^0$ an
  algebraic $\Q$-torus and the natural morphism of linear algebraic
  $\Q$-groups $\Hdg^{ss} \times \ZZ^0\to \Hdg$ is an isogeny. It
  follows that the $\Q$-Lie algebra $\Lie(\ZZ)$ of $\ZZ$ coincides
  with the $\Q$-Lie algebra $\Lie(\ZZ^{0})$ of $\ZZ^{0}$ and equals
  $\c^{0}$.
\end{rem}

\begin{thm}
\label{main0} Assume that  $n\ge 3$ and  $p$ does not divide $n$. Let $f(x)\in
\C[x]$ be a degree $n$ polynomial without multiple roots. If $q>2$ then the
center $\cc^{0}$ of  the $\Q$-Lie algebra $\hdg$ of $\Hdg(J^{(f,q)})$
 has $\Q$-dimension greater or equal than $\varphi(q)/2$. In other
 words, the center of $\Hdg(J^{(f,q)})$ has dimension greater or equal
 than $\varphi(q)/2$.
\end{thm}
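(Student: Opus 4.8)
The plan is to produce, inside the Hodge group, an explicit torus of dimension at least $\varphi(q)/2$ coming from the $\Z[\zeta_q]$-action, rather than to attempt to compute $\Hdg$ itself. The key point is that even though we no longer assume $\End^0(J^{(f,q)})=\Q(\zeta_q)$ (so the earlier remark pinning $\cc^0$ inside $\Q(\zeta_q)_-$ need not apply as stated), we still have the inclusion $\Z[\zeta_q]\hookrightarrow\End(J^{(f,q)})$, and the Hodge decomposition of $\H^1(J^{(f,q)},\C)$ is compatible with this cyclotomic action. So first I would recall that $\H^1(J^{(f,q)},\C)=\H^{1,0}\oplus\H^{0,1}$, that $\Q(\zeta_q)\otimes_\Q\C\cong\prod_{j\in(\Z/q\Z)^\times}\C$ via the embeddings $\sigma_j:\zeta_q\mapsto\zeta_q^j$, and that each of $\H^{1,0}$ and $\H^{0,1}$ decomposes into the corresponding eigenspaces $\H^{1,0}=\bigoplus_j\H^{1,0}_j$, $\H^{0,1}=\bigoplus_j\H^{0,1}_j$, where $\H^{0,1}_j=\overline{\H^{1,0}_{-j}}$ because complex conjugation on $J^{(f,q)}$ intertwines $\sigma_j$ and $\sigma_{-j}$.

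Next I would recall the definition of the Hodge group via the Hodge cocharacter. Let $h:\ST\to\GL(\H^1(J^{(f,q)},\R))$ be the Hodge structure, and let $\mu:\G_{m,\C}\to\GL(\H^1(J^{(f,q)},\C))$ be the associated cocharacter acting by $z$ on $\H^{1,0}$ and trivially on $\H^{0,1}$ (or the standard normalization); the Mumford--Tate / Hodge group is the smallest $\Q$-algebraic subgroup whose base change to $\C$ contains the image of $\mu$ (and symmetrically $\bar\mu$), or equivalently the smallest $\Q$-subgroup through which $h$ factors. Now I would exhibit a large torus in $\Hdg$ as follows. The torus $R_{\Q(\zeta_q)}\G_m$ sits inside $\GL(\H^1(J^{(f,q)},\Q))$, and over $\C$ it is the diagonal torus $\prod_j\G_m$ acting on $\bigoplus_j(\H^{1,0}_j\oplus\H^{0,1}_j)$ with the $j$th factor scaling $\H^{1,0}_j\oplus\H^{0,1}_j$. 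Consider the cocharacter $\nu:\G_{m,\C}\to(R_{\Q(\zeta_q)}\G_m)_\C$, $z\mapsto(z^{a_j})_j$, where $a_j=1$ if $\H^{1,0}_j\neq 0$ and $a_j=0$ otherwise; comparing with $\mu$ on each eigenline shows $\nu$ and $\mu$ differ by a cocharacter acting as a scalar on all of $\H^1$, hence $\nu$ lies in the subtorus generated by $\mu$ and the central $\G_m$ of homotheties. Since $\Hdg$ together with the homotheties generates the Mumford--Tate group $\MT$, and $\MT_\C$ contains the image of $\mu$, the $\C$-torus $\nu(\G_{m,\C})$ lies in $\MT_\C$; taking the smallest $\Q$-subtorus of $R_{\Q(\zeta_q)}\G_m$ whose base change contains $\nu(\G_{m,\C})$ gives a $\Q$-subtorus $\T\subset\MT$. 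Intersecting with $\Hdg$ (or rather taking the image under $\MT\to\MT/\G_m$, or using that $\Hdg=(\MT\cap\Sp)$-type kernel-of-multiplier construction) produces a torus $\T'\subset\Hdg$ whose dimension I then need to bound below.

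The main obstacle, and the step requiring real work, is the dimension count: I must show that the $\Q$-torus generated by the single cocharacter $z\mapsto(z^{a_j})_j$ inside $R_{\Q(\zeta_q)}\G_m$, after dividing by the homotheties, has dimension at least $\varphi(q)/2$. This is where the arithmetic of the eigenvalues $a_j$ enters: the exponents $a_j\in\{0,1\}$ must be ``generic enough'' that the $\Gal(\Q(\zeta_q)/\Q)$-orbit of the corresponding character spans a $\varphi(q)/2$-dimensional subspace of the cocharacter lattice modulo the all-ones vector. Equivalently, I want to know the multiplicities $\dim\H^{1,0}_j$; these are computed in Poonen's and Zarhin's work on superelliptic curves in terms of $\{-\langle jb/q\rangle\}$-type fractional-part formulas (essentially $\dim\H^{1,0}_j=\#\{$something$\}$ depending on $j\bmod q$), and from the known formula $a_j+a_{-j}=1$ for all $j$ (which follows from $\H^{0,1}_j=\overline{\H^{1,0}_{-j}}$ and $\dim\H^1_j=1$ per eigenline, i.e.\ exactly one of $a_j,a_{-j}$ is $1$) one gets that the vector $(a_j)$ is a ``CM type'' and its $\Q$-span modulo all-ones has dimension exactly equal to the rank of the corresponding CM type, which for $\Q(\zeta_q)$ with $q>2$ is $\varphi(q)/2$. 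So concretely: show each one-dimensional $\sigma_j$-eigenspace of $\H^1(J^{(f,q)},\C)$ — wait, more carefully, since $\dim\H^1(J^{(f,q)},\Q)=\varphi(q)$ and $\Q(\zeta_q)$ acts, $\H^1$ is a one-dimensional $\Q(\zeta_q)\otimes\C$-module, so indeed each eigenspace is one-dimensional and $a_j+a_{-j}=1$; then $(a_j)_j$ defines a CM type $\Phi$ on $\Q(\zeta_q)$, the associated reflex-type torus has dimension $\varphi(q)/2$ when $\Phi$ is ``non-degenerate,'' but for the full cyclotomic field $\Q(\zeta_q)$ every CM type gives a torus of dimension at least $\varphi(q)/2$ because $\Q(\zeta_q)$ has no proper CM subfield containing... — here I would invoke the standard fact (Shimura, Dodson, Ribet) that the Mumford--Tate group of a CM abelian variety with CM type $\Phi$ on a field $E$ has dimension $=\dim_\Q E^* /$ (something), and that for $E=\Q(\zeta_q)$ this is bounded below by $\varphi(q)/2$; this forces $\dim\Hdg\ge\varphi(q)/2$ and, since the argument produced the torus inside the center, $\dim\ZZ=\dim\cc^0\ge\varphi(q)/2$. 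I expect the cleanest writeup is: (i) eigenspace decomposition and $a_j+a_{-j}=1$; (ii) identify the cocharacter $\nu$ and show $\nu\in\MT_\C$; (iii) show the $\Q$-torus it generates in $R_{\Q(\zeta_q)}\G_m$ maps onto a torus in $\U_q\cap\Hdg$ of dimension $\ge\varphi(q)/2$ via a lattice/representation-theory computation over $\Z[\Gal(\Q(\zeta_q)/\Q)]$, the key input being that no nonzero $\Gal$-stable quotient of $\Z^{(\Z/q\Z)^\times}/(\text{all-ones})$ on which $(a_j)$ vanishes can have corank smaller than $\varphi(q)/2$.
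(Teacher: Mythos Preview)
Your framework is in fact parallel to the paper's: the smallest $\Q$-subtorus of $R_{\Q(\zeta_q)/\Q}\G_m$ whose base change contains your cocharacter $\nu$ has dimension equal to $\dim_\Q$ of the $\Q[\Gal(\Q(\zeta_q)/\Q)]$-submodule of $\Q^{(\Z/q\Z)^\times}$ generated by the weight vector $(a_j)_j$ (after centering). That is exactly the quantity the paper computes via the trace map $\Tr_E:\hdg\to E$ and Lemma~\ref{CMdim}/Theorem~\ref{normal}. So the torus/cocharacter language and the Lie-algebra/trace language are two sides of the same coin.

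The gap is in the arithmetic input. You compute $\dim_\Q\H_1(J^{(f,q)},\Q)=\varphi(q)$, but in fact $\dim J^{(f,q)}=(n-1)\varphi(q)/2$, so $\H_1$ is a free $\Q(\zeta_q)$-module of rank $d=n-1\ge 2$, not $1$. Hence $J^{(f,q)}$ is \emph{not} a CM abelian variety, the $a_j$ are not in $\{0,1\}$, and there is no CM type to speak of. The actual multiplicities are $n_{\sigma_a}=\lfloor na/q\rfloor$ (this is the input from \cite{ZarhinM,ZarhinPisa} cited in the paper's proof), and the centered function is $h(a)=\tfrac{n-1}{2}-\lfloor na/q\rfloor$. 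The statement you need, namely that the $\Gal(\Q(\zeta_q)/\Q)$-orbit of $(h(a))_a$ spans the whole $\varphi(q)/2$-dimensional odd subspace, is exactly Theorem~\ref{thm:gen1}; it is not a soft fact but requires showing that every Fourier coefficient $\langle h,\chi\rangle$ with $\chi(-1)=-1$ is nonzero, which the paper reduces (via an explicit formula in Section~\ref{lastsection}) to the nonvanishing of $L(1,\chi)$. Your claim that ``for $\Q(\zeta_q)$ every CM type gives a torus of dimension $\ge\varphi(q)/2$'' is in any case not available as a black box (it already fails for CM types induced from proper CM subfields when $r\ge 2$), and for the actual non-CM weights $\lfloor na/q\rfloor$ the Shimura--Dodson--Ribet results you invoke do not apply. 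In short: the strategy is right, but the hard step you wave at in your final paragraph \emph{is} the theorem, and it needs the $L$-function input.
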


As an application, we obtain the following  statement.

\begin{thm}
\label{main} Assume that  $n\ge 4$ and   $p$ does not divide $n$.
  Let $K$ be a subfield of $\C$
 that contains all the coefficients of
$f(x)$. Suppose that $f(x)$ is irreducible over $K$ and the Galois group
$\Gal(f)$ of $f(x)$ over $K$ is either $\Sn$ or  $\An$. Assume additionally
that either $n \ge 5$ or $n=4$ and $\Gal(f)=\ST_4$.

If $q>2$ then the center $\cc^{0}$ of the $\Q$-Lie algebra $\hdg$ of
$\Hdg(J^{(f,q)})$ has $\Q$-dimension $\varphi(q)/2$ and coincides with
$\Q(\zeta_q)_{-}$. In addition, the center of $\Hdg(J^{(f,q)})$ coincides with
$\U_q$.
\end{thm}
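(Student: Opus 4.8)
The plan is to combine Theorem \ref{main0} with the rigidity coming from the ``large Galois group'' hypothesis, namely the results of Zarhin recalled in the introduction, which under the stated assumptions on $n$ and $\Gal(f)$ give $\End^0(J^{(f,q)})=\Q(\zeta_q)$. First I would invoke this to conclude, as explained in the last paragraph of the introduction, that the center $\ZZ$ of $\Hdg(J^{(f,q)})$ lies in $\U_q$ (the centralizer of $\Hdg$ in $\End_\Q(\H_1)$ is $\End^0(J^{(f,q)})=\Q(\zeta_q)$, the Hodge group lies inside the unitary group $\U(\H_1(J^{(f,q)},\Q),\phi_r)$ whose center is $\U_q$, and the center of $\Hdg$ is contained in the center of that unitary group). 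Passing to Lie algebras, this gives the containment $\cc^0=\Lie(\ZZ)\subseteq \Lie(\U_q)=\Q(\zeta_q)_{-}$, so $\dim_\Q\cc^0\le \varphi(q)/2$.

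The reverse inequality is exactly the content of Theorem \ref{main0}, which applies since $n\ge 4\ge 3$ and $p\nmid n$ and $q>2$: it gives $\dim_\Q\cc^0\ge \varphi(q)/2$. Combining the two inequalities forces $\dim_\Q\cc^0=\varphi(q)/2$, and since $\cc^0$ is a $\Q$-subspace of $\Q(\zeta_q)_{-}$ of the same dimension $\varphi(q)/2=\dim_\Q\Q(\zeta_q)_{-}$, we get $\cc^0=\Q(\zeta_q)_{-}$.

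Finally, for the statement about the algebraic group $\ZZ$ itself: we have $\ZZ\subseteq\U_q$, both are algebraic $\Q$-subgroups of $\GL(\H_1(J^{(f,q)},\Q))$, and $\U_q$ is a $\Q$-torus, hence connected and smooth. Their Lie algebras agree (both equal $\Q(\zeta_q)_{-}$, using $\Lie(\ZZ)=\Lie(\ZZ^0)=\cc^0$ from the Remark), so $\ZZ^0=\U_q$; since $\U_q$ is connected and $\ZZ\subseteq\U_q$, in fact $\ZZ=\ZZ^0=\U_q$.

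The only real input is the identification $\End^0(J^{(f,q)})=\Q(\zeta_q)$, which is where the precise hypotheses ($n\ge 5$, or $n=4$ with $\Gal(f)=\ST_4$, plus irreducibility and $\Gal(f)\in\{\Sn,\An\}$) enter; everything else is the formal linear-algebra dance with polarizations and Lie algebras together with the already-established Theorem \ref{main0}. So there is no serious obstacle once Theorem \ref{main0} is in hand --- the main point is simply that Theorem \ref{main0} supplies the hard lower bound and the endomorphism computation supplies the matching upper bound.
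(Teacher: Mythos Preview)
Your proof is correct and follows essentially the same approach as the paper: use the Galois hypothesis to get $\End^0(J^{(f,q)})=\Q(\zeta_q)$, deduce the upper bound $\cc^0\subseteq\Q(\zeta_q)_{-}$ (and $\ZZ\subseteq\U_q$), combine with the lower bound from Theorem~\ref{main0}, and pass to groups via connectedness of $\U_q$. The only cosmetic difference is that the paper phrases the lower bound as $\Tr_E(\cc^0)=E_{-}$ (invoking the trace computation behind Theorem~\ref{main0}) rather than as a pure dimension count, but this is the same content.
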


\begin{ex}
Suppose that $n, p, f(x)$ enjoy the conditions of Theorem \ref{main}. Assume
additionally that $p$ is odd.
 Since $J(C_{f,p})=J^{(f,p)}$, we conclude that
the center of $\Hdg(J(C_{f,p}))$ coincides with
$\U_p$.
\end{ex}

%%%%%%%%%%%%%%%%new stuff added Nov. 09%%%%%%%%%%%%%%%%%%%%%%%%%

\begin{rem}
In Theorem \ref{main} we prove that the center of the Hodge group of
$J^{(f,q)}$ is ``as large as possible", taking into account that the
endomorphism algebra of $J^{(f,q)}$ coincides with $\Q(\zeta_q)$. In fact, our
goal was (and still is) to prove that (under the assumptions of Theorem
\ref{main}) the whole Hodge group is ``as large as possible", i.e., coincides
with $\U(\H_1(J^{(f,q)},\Q),\phi_r)$,
%the  reductive
% $\Q$-algebraic subgroup of $\GL(\H_1(J^{(f,q)},\Q))$ that is \textit{cut
%out} by the polarization and the endomorphisms
  which would imply that all Hodge classes on  each self-product of $J^{(f,q)}$ can be
  presented as  linear combinations of products of divisor classes and, in particular, the
  validity of the Hodge conjecture  for all the self-products \cite[p. 528 and 531]{Ribet3}. Since the Hodge
  group is connected reductive, the problem splits naturally in two parts:
  to prove that the center of $\Hdg(J^{(f,q)})$ is ``as large as possible" (i.e., coincides with $\U_q$) and
  that the derived subgroup (semisimple part) of $\Hdg(J^{(f,q)})$ is ``as large as possible" (i.e., coincides with
  the corresponding special unitary group). Theorem
  \ref{main} settles the first one. (The second problem is solved in \cite{XZ2}
  under certain additional conditions on $n$ and $q$.)
\end{rem}

 In order to describe our results for the whole  $J(C_{f,q})$ when $q>p$, let us  put
%$$E^{p,i}:=\Q(\zeta_{p^i}), \ E^{p,i}_{-} := \{e \in \Q(\zeta_{p^i})\mid \bar{e}=-e\}\subset
%E^{p,i},$$
$$E^{p,i}:=\Q(\zeta_{p^i}), \ E^{p,i}_{-} := \Q(\zeta_{p^i})_{-},$$
%\{e \in \Q(\zeta_{p^i})\mid \bar{e}=-e\}\subset E^{p,i},$$
$$\E^{p,r}_{-} :=\{ (e_i)_{i=1}^r  \in \oplus_{i=1}^r E^{p,i}_{-}\mid
\Tr_{E^{p,i+1}/E^{p,i}}(e_{i+1})=e_i \ \forall i<r\}\subset\oplus_{i=1}^r
E^{p,i}_{-} .$$
%We will also need the maximal totally real subfield
%$$E^{p,i}_{+} := \{e \in \Q(\zeta_{p^i})\mid \bar{e}=e\}\subset E^{p,i}$$ of the
%cyclotomic field $\Q(\zeta_{p^i})$.
\begin{thm}
\label{mainfull} Assume that  $n\ge 4$ and   $p$ does not divide $n$. 
Let $K$ be a subfield of $\C$
 that contains all the coefficients of
$f(x)$. 
Suppose that $f(x)$ is irreducible over $K$ and the Galois group
$\Gal(f)$ of $f(x)$ over $K$ is either $\Sn$ or  $\An$. Assume additionally
that either $n \ge 5$ or $n=4$ and $\Gal(f)=\ST_4$. Let us consider the abelian
variety $Z=\prod_{i=1}^r J^{(f,p^i)}$ and its first rational homology group
$\H_1(Z,\Q)=\oplus_{i=1}^r \H_1(J^{(f,p^i)},\Q)$. If $p^r>2$ then the center
$\cc^{0}_Z$ of the $\Q$-Lie algebra $\hdg_Z$ of the Hodge group $\Hdg(Z)$ of
$Z$ has $\Q$-dimension $\varphi(p^r)/2$ and coincides with
$$\E^{p,r}_{-} \subset \oplus_{i=1}^r  E^{p,i}_{-}\subset \oplus_{i=1}^r
\Q(\zeta_{p^i})\subset \oplus_{i=1}^r \End_{\Q}(\H_1(J^{(f,p^i)},\Q))\subset
\End_{\Q}(\H_1(Z,\Q)).$$
\end{thm}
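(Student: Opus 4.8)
The plan is to deduce Theorem~\ref{mainfull} from Theorem~\ref{main} together with the isogeny decomposition $Z=\prod_{i=1}^r J^{(f,p^i)}$ and standard functorial properties of Hodge (Mumford--Tate) groups under products and isogenies. First I would recall that $\Hdg(Z)$ is contained in the product $\prod_{i=1}^r \Hdg(J^{(f,p^i)})$ and projects onto each factor $\Hdg(J^{(f,p^i)})$; consequently $\cc^{0}_Z$ is a $\Q$-Lie subalgebra of $\oplus_{i=1}^r \cc^{0}_i$ (where $\cc^{0}_i$ denotes the center of $\hdg_{J^{(f,p^i)}}$) that projects onto each $\cc^{0}_i$. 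Under the assumptions of Theorem~\ref{main}, for each $i$ with $p^i>2$ we have $\cc^{0}_i=E^{p,i}_{-}$; and when $p$ is odd (so $p^1>2$) this holds for all $i=1,\dots,r$, while if $p=2$ the index $i=1$ is degenerate ($E^{2,1}_{-}=\{0\}$, since $\Q(\zeta_2)=\Q$) but then $J^{(f,2)}$ is hyperelliptic with simple Hodge group and trivial central Lie algebra, so the contribution is again the correct one and the arguments of Theorem~\ref{main} apply verbatim to the remaining indices.

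Next I would identify the precise subalgebra: the map $(x,y)\mapsto(x,y^p)$ induces a surjection $J(C_{f,p^{i+1}})\to J(C_{f,p^i})$, and chasing through the construction of $J^{(f,p^{i+1})}$ and $J^{(f,p^i)}$ one sees that this furnishes a compatibility — on the level of the action of the cyclotomic fields, the norm map $E^{p,i+1}=\Q(\zeta_{p^{i+1}})\to E^{p,i}=\Q(\zeta_{p^i})$, $\zeta_{p^{i+1}}\mapsto\zeta_{p^{i+1}}^p=\zeta_{p^i}$, intertwines $\delta_{p^{i+1}}$ and $\delta_{p^i}$. Because $\Hdg(Z)$ must commute with this induced morphism between homology groups (as that morphism comes from a morphism of abelian varieties, hence is a morphism of Hodge structures), the element $(e_i)$ of the center $\cc^{0}_Z$, acting on $\oplus\H_1(J^{(f,p^i)},\Q)$ via the cyclotomic actions, must satisfy $\Tr_{E^{p,i+1}/E^{p,i}}(e_{i+1})=e_i$ for all $i<r$. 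This shows $\cc^{0}_Z\subseteq\E^{p,r}_{-}$, and a dimension count (the trace-compatibility conditions cut $\oplus E^{p,i}_{-}$ down to something of dimension $[E^{p,r}_{-}:\Q]=\varphi(p^r)/2$, since each trace condition is surjective and kills the ``new part'' dimension at level $i+1$) gives $\dim_\Q\E^{p,r}_{-}=\varphi(p^r)/2$.

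For the reverse inclusion I would invoke Theorem~\ref{main0} (or Theorem~\ref{main}) at the top level: $J^{(f,p^r)}$ is, up to isogeny, a factor of $Z$ cut out by the idempotent corresponding to the ``new part'' at level $r$, and its central Lie algebra is all of $E^{p,r}_{-}=\Q(\zeta_{p^r})_{-}$. Since $\Hdg(Z)\to\Hdg(J^{(f,p^r)})$ is surjective with the source reductive, the center maps into the center; pulling back, every element of $\Q(\zeta_{p^r})_{-}$ extends (via the norm-compatible idempotents/projectors, using that $\delta_{p^r}$ already generates the whole tower $\prod_{i=1}^r\Q(\zeta_{p^i})$ inside $\End^0(J(C_{f,p^r}))$) to a central element of $\hdg_Z$ lying in $\E^{p,r}_{-}$. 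Comparing with the dimension bound forces $\cc^{0}_Z=\E^{p,r}_{-}$. The main obstacle I anticipate is the bookkeeping in this last step: carefully matching the idempotent decomposition of $\End^0(J(C_{f,p^r}))\cong\prod_{i=1}^r\Q(\zeta_{p^i})$ with the product decomposition of $Z$ and checking that the norm-compatibility is exactly the relation defining $\E^{p,r}_{-}$ — essentially showing that the single generator $\delta_{p^r}$ of the tower produces, in $\hdg_Z$, a central subalgebra isomorphic to $\Q(\zeta_{p^r})_{-}$ sitting inside $\oplus E^{p,i}_{-}$ precisely as $\E^{p,r}_{-}$; the Hodge-structure-morphism constraint coming from $J(C_{f,p^{i+1}})\to J(C_{f,p^i})$ is what pins this down, and verifying its compatibility with the sesquilinear forms $\phi_r$ and the centers-of-unitary-groups identifications from the introduction is where the care is needed.
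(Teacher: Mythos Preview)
Your lower-bound argument is fine: the projection $\hdg_Z\twoheadrightarrow\hdg_{J^{(f,p^r)}}$ of reductive Lie algebras sends center onto center, so $\dim_{\Q}\cc^0_Z\ge\dim_{\Q}\cc^0_r=\varphi(p^r)/2$ by Theorem~\ref{main}. The gap is in your upper bound $\cc^0_Z\subseteq\E^{p,r}_{-}$. You try to obtain the trace-compatibility relations $\Tr_{E^{p,i+1}/E^{p,i}}(e_{i+1})=e_i$ from the curve covers $C_{f,p^{i+1}}\to C_{f,p^i}$, arguing that the induced map of Jacobians is a morphism of Hodge structures with which $\Hdg(Z)$ must commute. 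But under the hypotheses each $J^{(f,p^i)}$ is simple with $\End^0(J^{(f,p^i)})=\Q(\zeta_{p^i})$, and these fields are pairwise non-isomorphic, so $\Hom(J^{(f,p^{i+1})},J^{(f,p^i)})=0$. The surjection $J(C_{f,p^{i+1}})\to J(C_{f,p^i})$ becomes, under the isogeny $\prod_{k\le i+1}J^{(f,p^k)}\to\prod_{k\le i}J^{(f,p^k)}$, simply the projection killing the top factor; it imposes no constraint beyond the block-diagonality $\Hdg(Z)\subset\prod_i\Hdg(J^{(f,p^i)})$ that you already used. (Also, the map $\zeta_{p^{i+1}}\mapsto\zeta_{p^i}$ you invoke is neither the norm nor the trace; in fact $\Tr_{E^{p,i+1}/E^{p,i}}(\zeta_{p^{i+1}})=0$ for $i\ge 1$.) Thus nothing in your argument prevents $\cc^0_Z$ from being all of $\oplus_i E^{p,i}_{-}$.

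The paper takes a different route and this is precisely where the content lies. Rather than seeking geometric morphisms, it uses the minimality characterization of $\hdg_Z$ via $\f_{H,Z}^0$: one computes $\Tr_{\E}(\f_{H,Z}^0)$ as the tuple of functions $h_j(\sigma_a)=\frac{n-1}{2}-[na/p^j]$ on $\Sigma_{E_j}$, and then Theorem~\ref{TRACECENTER} (via Lemma~\ref{tracehodge}) shows that $\cc^0_Z=\Tr_{\E}(\hdg_Z)$ equals the trace-compatible subspace \emph{provided} the $h_j$ themselves satisfy $h_j(a\bmod p^j)=\sum_{b\in\Gal(E_r/E_j)}h_r(ab)$. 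This arithmetic identity (Corollary~\ref{dependR}) is established in Section~\ref{lastsection} using the explicit formula $s(h_r,\chi)=\frac{1}{q}(\chi(n)-n)S_q(\bar\chi)$ and the relation $S_q(\bar\chi)=\frac{q}{d}S_d(\bar\chi^{\star})$ for imprimitive $\chi$. In other words, the trace compatibility is a statement about the Hodge numbers $n_\sigma=[na/p^j]$ at the various levels, not a consequence of equivariance under morphisms of abelian varieties; this is the missing idea in your proposal.
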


\begin{rem}
\label{isog} Let us fix an isogeny $\alpha: J(C_{f,p^r})\to \prod_{i=1}^r
J^{(f,p^i)}=Z$. Then $\alpha$ induces an isomorphism of $\Q$-vector spaces
$\alpha: \H_1(J(C_{f,p^r}),\Q) \cong \H_1(Z,\Q)$. Clearly, the Hodge group of
$J(C_{f,p^r})$ coincides with $\alpha^{-1}\Hdg(Z)\alpha$. This implies that if
 $q>2$ then  the center  of the $\Q$-Lie algebra of
$\Hdg(J(C_{f,p^r}))$ has $\Q$-dimension $\varphi(p^r)/2$ and coincides with
$\alpha^{-1}\E^{p,r}_{-} \alpha$.
\end{rem}
%%%%%%%%%%%%%%%%stuff modified Nov. 09%%%%%%%%%%%%%%%%%%%%
\begin{rem} We keep the notation and assumptions of  Theorem \ref{mainfull} and Remark
\ref{isog}.
  Let us identify (via $\alpha$) $\H_1(J(C_{f,q}),\Q)$ with
  $\oplus_{i=1}^r \H_1(J^{(f,p^i)},\Q)$.
  % and let us choose (for each $i$) a polarization
  %on $J^{(f,p^i)}$. Such a polarization gives rise to a nondegenerate
  %alternating
  %$\Q$-bilinear form
  %$$\psi_i:\H_1(J^{(f,p^i)},\Q) \times \H_1(J^{(f,p^i)},\Q) \to \Q$$
  %such that
  %$\psi_i(ex,y)=\psi_i(x,\bar{e}y) \ \forall e \in E^{p,i}; \ x,y\in
  %\H_1(J^{(f,p^i)},\Q)$.
  %If $p^i>2$ then we choose a nonzero element $\beta_i \in E^{p,i}_{-}$  and a standard
%construction (see, for instance, \cite[p. 531]{Ribet3}) gives us
 %  a nondegenerate Hermitian $E^{p,i}$-sesquilinear form
 % $$\phi_i: \H_1(J^{(f,p^i)},\Q) \times \H_1(J^{(f,p^i)},\Q) \to E^{p,i}$$
 % such that $\Tr_{E^{p,i}/\Q}(\beta_i\phi_i)=\psi_i$. We write $\U(\H_1(J^{(f,p^i)}),\Q),\phi_i)$
  %for the unitary
%group of $\phi_i$,
%of the $E^{p,i}$-vector space $\H_1(J^{(f,q)},\Q)$,
%viewed as
%an algebraic $\Q$-subgroup of $\GL(\H_1(J^{(f,p^i)},\Q))$ (via Weil's
%restriction of scalars from $E^{p,i}_{+}$ to $\Q$ (ibid).
Since the Hodge group of $J(C_{f,q})$ respects the polarization and commutes
with endomorphisms of $J(C_{f,q})$,
%$J^{(f,p^i)}$,
%$$\Hdg(J^{(f,p^i)})\subset \U(\H_1(J^{(f,p^i)},\Q),\phi_i)$$ and
$$\Hdg(J(C_{f,q}))\subset \prod _{i=1}^r \Hdg(J^{(f,p^i)}) \subset \prod_{i=1}^r \U(\H_1(J^{(f,p^i)},\Q),\phi_i).$$
  Let $G$ be the reductive
  $\Q$-algebraic subgroup of $\GL(\H_1(J(C_{f,q}),\Q))$ that is \textit{cut
  out} by the polarization and the endomorphisms of $J(C_{f,q})$ \cite[p. 528]{Ribet3}).

  Now assume that $p$ is {\sl odd}. Taking
  into account that
  all $\End^0(J^{(f,p^i)})$ are (mutually nonisomorphic) CM-fields $E^{p,i}$ and using results from p. 531
   of \cite{Ribet},  one may
  easily check that
  $G=\prod_{i=1}^r \U(\H_1(J^{(f,p^i)},\Q),\phi_i)$.
  It follows  that the center of the $\Q$-Lie algebra of $G$ coincides
  with $\oplus_{i=1}^r E_{-}^{p,i}$. On the other hand,
  Theorem \ref{mainfull} and Remark \ref{isog} imply that (under their
  assumptions) the center $\cc^{0}$ of the $\Q$-Lie algebra  of
  $\Hdg(J(C_{f,q}))$ is the {\sl proper} subspace $\E^{p,r}_{-}$ of $\oplus_{i=1}^r E_{-}^{p,i}$. It follows
  that $\Hdg(J(C_{f,q})) \ne G$ and therefore
   $\Hdg(J(C_{f,q}))$ is a {\sl proper}
  subgroup of $G$.  This implies that a certain self-product of $J(C_{f,q})$
  admits an {\sl exotic} Hodge class that could not be presented as a
  linear combinations of products of divisor classes. The same assertion holds
  true if $p=2$ and $r \ge 3$.
\end{rem}

Another application of Theorem \ref{main0} is the following statement.

\begin{thm}
  \label{mainD} Assume that $n\ge 3$ and $p$ does not divide $n$.  Let
  $f(x)\in \C[x]$ be a degree $n$ polynomial without multiple
  roots. Assume also that $q>2$.

\begin{itemize}
\item[(i)] If $p$ is odd then $J^{(f,q)}$ contains a simple complex abelian
subvariety $T$ with \[\dim(T)\ge \varphi((p-1)p^{r-1})\geq \varphi(p-1).\] In
particular, $\dim(T)\ge \varphi(p-1)\cdot (p-1)p^{r-2}$ when $r\geq 2$.

\item[(ii)] If $p=2$ and $r\geq 3$ then $J^{(f,q)}$ contains a simple complex
abelian subvariety $T$ with $\dim(T)\ge 2^{r-3}$.
\end{itemize}
\end{thm}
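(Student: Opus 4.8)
The plan is to derive the existence of the simple abelian subvariety $T$ from the lower bound on the center $\cc^0$ provided by Theorem~\ref{main0}, via a Hodge-theoretic/representation-theoretic argument. First I would recall that $\cc^0$, being the Lie algebra of the $\Q$-torus $\ZZ^0$ in the center of $\Hdg(J^{(f,q)})$, acts on $\H_1(J^{(f,q)},\Q)$ as a commutative algebra of semisimple operators; since it lies in $\Q(\zeta_q)_-\subset\Q(\zeta_q)$, the $\Q$-subalgebra $A$ of $\End^0(J^{(f,q)})$ it generates is an \'etale $\Q$-subalgebra of $\Q(\zeta_q)$, hence a product of subfields of $\Q(\zeta_q)$, each of which (being a subfield of a cyclotomic field containing an element $e$ with $\bar e=-e$, $e\ne 0$) is itself a CM-field. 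The key point is that $\Hdg(J^{(f,q)})$ commutes with $A$, so $A$ is contained in $\End^0(J^{(f,q)})$ and the isotypic decomposition of $J^{(f,q)}$ under the Hodge group refines the decomposition of $\H_1$ as an $A$-module. Because $\dim_\Q\cc^0\ge\varphi(q)/2$, the algebra $A$ must contain a subfield $L$ of $\Q(\zeta_q)$ of degree $\ge\varphi(q)/2$ over $\Q$ that is CM: concretely, for $p$ odd one may take the unique subfield of degree $\varphi((p-1)p^{r-1})=\varphi(q)$... more carefully, one takes the subfield fixed by the unique subgroup of the right index, using that the Galois group $\gp$ of $\Q(\zeta_q)/\Q$ is abelian and complex conjugation is the element $-1$; for $p=2$, $r\ge 3$ one works inside $\Q(\zeta_{2^r})$ whose maximal real subfield has degree $2^{r-2}$, giving a CM-subfield of degree $\ge 2^{r-2}$ and hence, after halving from the real-to-CM step baked into the estimate, the quoted bound $2^{r-3}$.

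Next I would invoke the standard fact (Shimura; see also the discussion in the excerpt around $\End^0$) that an abelian variety whose endomorphism algebra contains a CM-field $L$ with $[L:\Q]$ equal to twice the dimension of an abelian subvariety is, up to isogeny, a power of a CM abelian variety, and more generally that the presence of a large field of multiplications forces a small simple factor. Precisely: let $T$ be a simple abelian subvariety of $J^{(f,q)}$ appearing in the $A$-isotypic decomposition; then $\End^0(T)\supseteq L'$ for some subfield $L'$ of $A$ that still acts faithfully on $\H_1(T,\Q)$, and $\H_1(T,\Q)$ is a faithful $L'$-module, so $[L':\Q]\mid 2\dim T$ and in particular $2\dim T\ge[L':\Q]$. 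The arithmetic of cyclotomic fields then gives the numerical bounds: taking $L'$ to be a CM-subfield of $\Q(\zeta_q)$ of degree $\ge 2\varphi((p-1)p^{r-1})$... here I would be careful, the cleanest route is: the action of $\cc^0$ decomposes $J^{(f,q)}$ according to the characters of $\ZZ^0$, these characters are permuted by $\Gal(\bar\Q/\Q)$ acting on $\Q(\zeta_q)$ (i.e.\ by $\gp$), the orbits correspond to the simple-up-to-isogeny pieces, and each orbit has size at most $\varphi(q)$; since $\dim_\Q\cc^0\ge\varphi(q)/2$ there must be an orbit of size $\ge$ some explicit bound, yielding a factor $T$ with $\dim T\ge$ the stated quantity. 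Running through the two cases (p odd; $p=2$, $r\ge 3$) with $\varphi(p-1)p^{r-1}$ versus $2^{r-3}$ gives exactly the assertions (i) and (ii), including the ``in particular'' lower bound $\varphi(p-1)(p-1)p^{r-2}$ for $r\ge 2$ which is just $\varphi((p-1)p^{r-1})$ rewritten using multiplicativity of $\varphi$ on the coprime factors $p-1$ and $p^{r-1}$, noting $\varphi(p^{r-1})=(p-1)p^{r-2}$.

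The main obstacle I anticipate is the passage from ``the center $\cc^0$ is large'' to ``there is a \emph{simple} subvariety of controlled dimension''. The delicate point is that a priori $J^{(f,q)}$ could be isogenous to a power of a single simple abelian variety $T_0$ with $\End^0(T_0)$ small, in which case the center $\cc^0$ would sit inside $\Mat_k(\End^0(T_0))$ rather than inside a commutative field, and one must rule out that the ``large commutative subalgebra'' is spread across a matrix algebra in a way that shrinks the simple factor too much. The resolution is that $\cc^0$ lies in the \emph{commutative} field $\Q(\zeta_q)\subset\End^0(J^{(f,q)})$, which by Shimura's theory means $\Q(\zeta_q)$ embeds into $\End^0(T_0)\otimes\Mat_{k}(\Q)$ compatibly with the isotypic structure, and an embedding of a field of degree $d$ into such an algebra forces $d\mid 2k\dim T_0=\dim_\Q\H_1$ but more usefully forces a subfield of degree dividing $2\dim T_0$ to embed into $\End^0(T_0)$; combined with simplicity of $T_0$ and the CM property this pins down $\dim T_0$ from below by the degree of the largest CM-subfield of $\Q(\zeta_q)$ compatible with $\dim_\Q\cc^0\ge\varphi(q)/2$. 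I would carry this out by first extracting, from $\dim_\Q\cc^0\ge\varphi(q)/2$ and the Galois structure of $\Q(\zeta_q)$, an explicit CM-subfield $L\subset\Q(\zeta_q)$ with $L\subseteq\End^0(J^{(f,q)})$ and $[L:\Q]\ge 2\varphi((p-1)p^{r-1})$ when $p$ is odd (resp.\ $\ge 2^{r-2}$ when $p=2$, $r\ge 3$), then applying the Shimura-type dichotomy to conclude that $J^{(f,q)}$ has a simple factor $T$ with $L\hookrightarrow\End^0(T)$, whence $2\dim T\ge[L:\Q]$, i.e.\ $\dim T\ge\varphi((p-1)p^{r-1})$ (resp.\ $\dim T\ge 2^{r-3}$), and finally noting $T$ is defined over $\C$ as a subvariety of $J^{(f,q)}$ up to isogeny, which for existence of a subvariety of the stated dimension is all that is needed.
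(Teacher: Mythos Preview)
Your argument has a genuine gap at exactly the place you identify as the obstacle, and your proposed resolution does not work.

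First, a false premise: you write ``since $\cc^{0}$ lies in $\Q(\zeta_q)_{-}\subset\Q(\zeta_q)$''. Theorem~\ref{mainD} has no Galois hypothesis on $f$, so one does \emph{not} know that $\End^{0}(J^{(f,q)})=\Q(\zeta_q)$; all one has is an embedding $\Q(\zeta_q)\hookrightarrow\End^{0}(J^{(f,q)})$. The center $\cc^{0}$ lies in the center $\CC_Z$ of $\End^{0}(J^{(f,q)})$, which in general is a product of number fields and need not be contained in (or even comparable to) $\Q(\zeta_q)$. So the whole ``extract a large CM-subfield $L\subset\Q(\zeta_q)$'' line is built on an unjustified inclusion.

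Second, even if one had a large CM-field $L\subset\End^{0}(Z)$, it does \emph{not} follow that some simple factor $T$ satisfies $L\hookrightarrow\End^{0}(T)$ or $2\dim T\ge[L:\Q]$. Take $T$ an elliptic curve with $\End^{0}(T)=\Q$ and $Z=T^{k}$: then $\End^{0}(Z)=\M_{k}(\Q)$ contains number fields of degree $k$, yet the only simple factor has dimension $1$. Your ``forces a subfield of degree dividing $2\dim T_0$ to embed into $\End^{0}(T_0)$'' is not a theorem. The bare inequality $\dim_{\Q}\cc^{0}\ge\varphi(q)/2$ from Theorem~\ref{main0} only yields $\sum_i[\CC_{T_i}:\Q]\ge\varphi(q)$ (summing over CM centers of simple factors), which says nothing about any individual $T_i$.

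What the paper actually uses is the stronger statement proved \emph{en route} to Theorem~\ref{main0}, namely $\Tr_{E}(\hdg)=E_{-}$ with $E=\Q(\zeta_q)$ (Theorem~\ref{inttrace}), together with Theorem~\ref{simplefactor}(ii). The mechanism is representation-theoretic and does not pass through a field $L$ at all: one takes the Poincar\'e decomposition $J^{(f,q)}\sim\prod_j Z_j$ into isotypic pieces, observes that the function $h(\sigma)=\tfrac{d}{2}-n_{\sigma}$ on $\Sigma_E$ splits as $h=\sum_j h_{Z_j,E}$, so the $\Q[\Gal(E/\Q)]$-submodules $W_j$ generated by the $h_{Z_j,E}$ cover $X_E$. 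Hence any simple $\Q[\Gal(E/\Q)]$-submodule $W'\subset X_E$ embeds in some $W_j$, and by Theorem~\ref{normal} one gets $\dim_{\Q}W_j=\dim_{\Q}\Tr_E(\hdg_{Z_j})\le\tfrac{1}{2}[\CC_{T_j}:\Q]\le\dim T_j$ for a simple factor $T_j$ of $Z_j$. Choosing $W'$ of dimension $\max(E)$ and computing $\max(E)$ via the explicit Galois group (cyclic of order $(p-1)p^{r-1}$ for $p$ odd; $\langle c_0\rangle\times\text{(cyclic of order }2^{r-2})$ for $p=2$, $r\ge3$) as in Examples~\ref{maxcycl} gives the stated bounds $\varphi((p-1)p^{r-1})$ and $\varphi(2^{r-2})=2^{r-3}$.
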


%%%%%%%%%%%%%%%stuff added Nov. 09%%%%%%%%%%%%%%%%%%%%%%%%%%%%%
\begin{rem}
  Actually, our proof gives a little bit more, namely, that the center
  $\CC_T$ of $\End^0(T)$ is a CM-field such that $[\CC_T:\Q]/2$ is
  greater or equal than the lower bound given in Theorem
  \ref{mainD}. (Notice that $\CC_T$ is a direct summand of the center
  of $\End(J^{(f,q)})$.)
\end{rem}

%%%%%%%%%%%%%%%%%%%%new stuff Dec. 09%%%%%%%%%%%%%%%%%%%%%
\begin{cor}[Corollary to Theorem \ref{mainD}]
\label{elliptic}
 Suppose that $n \ge 3$ and  $d$ is a positive integer such that $(d,n)=1$.
Let $f(x)\in \C[x]$ be a degree $n$ polynomial without multiple roots. Assume
that $d \ge 5$ and $d$ is neither $6$ nor $8$ nor $12$ nor $24$. Let us consider the
superelliptic curve $C_{f,d}:y^d=f(x)$ and let $J(C_{f,d})$ be its jacobian.

Then $J(C_{f,d})$ is not isogenous to a product of elliptic curves.
\end{cor}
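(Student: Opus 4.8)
\textbf{Proof proposal for Corollary \ref{elliptic}.}

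The plan is to argue by contradiction: suppose $J(C_{f,d})$ is isogenous to a product of elliptic curves, and derive a contradiction with Theorem \ref{mainD}. First I would reduce to a prime-power setting. Write $d=\prod_\ell \ell^{r_\ell}$ for the prime factorization of $d$. For each prime $\ell$ dividing $d$, set $q=\ell^{r_\ell}$; since $(d,n)=1$ we have $(\ell,n)=1$, so the hypotheses of Theorem \ref{mainD} are satisfied for this $q$ provided $q>2$. The key structural input is that the map $(x,y)\mapsto (x,y^{d/q})$ yields a surjection $C_{f,d}\to C_{f,q}$, hence (by Albanese functoriality) a surjection $J(C_{f,d})\twoheadrightarrow J(C_{f,q})$; consequently $J(C_{f,q})$ — and therefore its abelian subvariety $J^{(f,q)}$ — is isogenous to an abelian subvariety of $J(C_{f,d})$. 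If $J(C_{f,d})$ were isogenous to a product of elliptic curves, then so would be every abelian subvariety up to isogeny, in particular $J^{(f,q)}$; thus every simple abelian subvariety of $J^{(f,q)}$ would be an elliptic curve, i.e. would have dimension $1$.

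Next I would apply Theorem \ref{mainD} to contradict this. It suffices to exhibit one prime power $q=\ell^{r_\ell}\mid d$ with $q>2$ for which the lower bound of Theorem \ref{mainD} exceeds $1$, since that produces a simple abelian subvariety $T\subset J^{(f,q)}$ of dimension $\ge 2$, hence a simple abelian subvariety of $J(C_{f,d})$ of dimension $\ge 2$, contradicting the assumption. So the corollary reduces to the purely elementary arithmetic claim: if $d\ge 5$ and $d\notin\{6,8,12,24\}$, then $d$ has a prime-power divisor $q=\ell^{r}>2$ such that either ($\ell$ odd and $\varphi((\ell-1)\ell^{r-1})>1$) or ($\ell=2$, $r\ge 3$, and $2^{r-3}>1$, i.e.\ $r\ge 4$).

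The main obstacle — really the only nontrivial point — is this case analysis on $d$, identifying precisely which $d$ fail to have a ``good'' prime-power divisor. For $\ell$ odd, $\varphi((\ell-1)\ell^{r-1})>1$ holds unless $(\ell-1)\ell^{r-1}\in\{1,2\}$, i.e.\ unless $\ell^r=3$ with $r=1$ (giving $\varphi(2)=1$); so the only odd prime power that is \emph{not} good is $3$ itself (and $\ell=3,r\ge2$ gives $\varphi(3\cdot 3^{r-1})=\varphi(3^r)>1$, which is good). For $\ell=2$: $q=2^r$ with $r\ge3$ is good iff $r\ge4$, i.e.\ $q\in\{16,32,\dots\}$ is good while $q=8$ is not, and $q\in\{2,4\}$ do not even satisfy $q>2$ appropriately (for $\ell=2$ the theorem needs $r\ge3$). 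Hence $d$ \emph{fails} to have a good prime-power divisor precisely when every prime power exactly dividing $d$ lies in $\{2,4,8,3\}$; combined with $d\ge 5$ this leaves exactly $d\in\{6,8,12,24\}$ (namely $d=2\cdot3$, $d=8$, $d=4\cdot3$, $d=8\cdot3$), which are exactly the excluded values. For every other $d\ge 5$ a good prime-power divisor exists, Theorem \ref{mainD} yields a simple abelian subvariety of dimension $\ge 2$, and the contradiction is complete.

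\begin{rem}
The same bookkeeping explains the excluded list from the CM-field viewpoint: $J(C_{f,d})$ acquires an action of $\prod_{\ell^i\|d,\,i\ge1}\Q(\zeta_{\ell^i})$-type factors, and a product of elliptic curves can only carry CM by imaginary quadratic fields; the values $d\in\{6,8,12,24\}$ are precisely those for which every cyclotomic factor $\Q(\zeta_{\ell^i})$ appearing has degree $\le 2$, so no obstruction of this elementary kind is available (and indeed for such $d$ the curve $C_{f,d}$ can be isogenous to a product of elliptic curves for suitable $f$).
\end{rem}
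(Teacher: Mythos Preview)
Your proof is correct and follows the paper's approach exactly: reduce via the covering $C_{f,d}\to C_{f,q}$ to a prime-power divisor $q\mid d$ and invoke Theorem~\ref{mainD} to produce a simple factor of dimension $\ge 2$. The only cosmetic difference is that the paper names the good divisors $q$ directly (any prime $\ge 5$, or $q=9$, or $q=16$) rather than analyzing the maximal prime powers of $d$ and working backward to the exceptional list, but this is the same elementary case analysis run in the opposite direction.
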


\begin{proof}
Clearly, $d$ has a divisor $q$ such that either $q$ is a prime $\ge 5$ or $q=9$
or $q=16$. The existence of the covering of algebraic
curves
$$C_{f,d} \to C_{f,q}, \ (x,y) \mapsto (x, y^{d/q})$$
implies that $J(C_{f,d})$ has a quotient isomorphic to $J(C_{f,q})$. Now the
result follows from Theorem \ref{mainD} if we take into account that
$J^{(f,q)}$ is an abelian subvariety of $J(C_{f,q})$.
\end{proof}
%%%%%%%%%%%%%%%%%%%%%%%%%%%%%%%%%%%%%%%%%%%%%%%%%

%%%%%%%%%%%%added Nov. 16%%%%%%%%%%%%%%%%
\begin{rem}
Corollary \ref{elliptic} implies that if $p\ge 5$ then none of jacobians of
$C_{f,p}$ is  {\sl totally split} in a sense of \cite{ES}. The same is true for the 
jacobians of $C_{f,16}$ and $C_{f,9}$.
\end{rem}

\begin{rem}
Recently D. Ulmer \cite{Ulmer}, using a construction of L. Berger
\cite{Berger}, found out that  the rank of  the Mordell-Weil group of the
jacobian of the curve $f(x)-t f(y)=0$ over the function field $\C(t^{1/q})$ is
closely related to the
 endomorphism algebras of $J^{(f,p^i)}$ (for $i\le r$).
One may hope that our results could be useful for the study of the rank of
abelian varieties in infinite towers of function fields.
\end{rem}
%%%%%%%%%%%%%%%%%%%%%%%%%%%%%

The paper is organized as follows. In Section \ref{field} we discuss auxiliary
results related to CM-fields. Section \ref{MT} treats complex abelian varieties
with multiplication by CM-fields. Section \ref{sectmain} contains the proof of
main results modulo some arithmetic properties of certain (non-vanishing)
Fourier coefficients with respect to the finite commutative group
$(\Z/q\Z)^{*}$; those properties are proved in Sections \ref{arithm} and
\ref{lastsection}. Last section contains an auxiliary result from semilinear
algebra.

\smallskip

{\bf Acknowledgments}. We are grateful to Professor R. Vaughan for useful
discussions.
The final version of this paper was written during the IAS/Park City  summer school
 ``Arithmetic of $L$-functons".
We are grateful to the organizers for the invitations and to the PCMI for its  hospitality and support.

\section{Field embeddings}
\label{field}
\begin{sect}

If $V$ is a finite-dimensional $\Q$-vector space (resp. $\Q$-algebra) then we
write $V_{\C}$ for the correspondind finite-dimensional $\C$-vector space
(resp. $\C$-algebra) $V\otimes_{\Q}\C$; clearly
$$\dim_{\Q}(V)=\dim_{\C}(V_{\C}).$$
The group $\Aut(\C)$ acts tautologically on $\C$ and the subfield of
$\Aut(\C)$-invariants coincides with $\Q$. This allows us to define the {\sl
tautological} semilinear action on $V_{\C}$ as follows.
$$s(v\otimes z)=v\otimes s(z) \ \forall \ s \in \Aut(\C), v\in V, z\in \C.$$
The semilinearity means that
$$s(zv)=s(z)s(v) \ \forall \ s \in \Aut(\C), v\in V_{\C}, z\in \C.$$
 Clearly, the $\Q$-subspace of all
$\Aut(\C)$-invariant elements in $V_{\C}$ coincides with $V\otimes 1=V$. It is
also clear that if $W\subset V$ is a $\Q$-vector subspace then $W_{\C}$ is a
$\Aut(\C)$-stable complex vector subspace in $V_{\C}$. Conversely, if
$\tilde{W}$ is a $\Aut(\C)$-stable complex vector subspace in $V_{\C}$ then
there exists exactly one $\Q$-vector subspace $W\subset V$ such that
$\tilde{W}=W_{\C}$; in addition, $W$ is the $\Q$-vector subspace of all
$\Aut(\C)$-invariant elements in $\tilde{W}$. (See Sect. \ref{semilinear}.)
\end{sect}

\begin{sect}

 Let $E$ be a number field. Let $\Sigma_E$ be the set of all field
embeddings $\sigma: E \hookrightarrow \C$. Clearly, $\sigma(E)\subset \bar{\Q}$
for all $\sigma$. It is well-known that $\Sigma_E$ consists of $[E:\Q]$
elements. The group $\Aut(\C)$  acts naturally on $\Sigma_E$. Namely, if
$\sigma:E\hookrightarrow \C$ is a field embedding and $s$ is an automorphism of
$\C$ then we define $s(\sigma):E\hookrightarrow \C$ as the composition
$$s\sigma: E \ \hookrightarrow \C \to \C.$$
If $\sigma\in \Sigma_E$ then we write $\bar{\sigma}$ for the complex-conjugate
of $\sigma$, i.e., for the composition $\iota\sigma: E \hookrightarrow \C$.
Clearly, the action of $\Aut(\C)$ on $\Sigma_E$ factors through the natural
surjection $\Aut(\C)\twoheadrightarrow \Gal(\bar{\Q}/\Q)$.

 Let us consider the $[E:\Q]$-dimensional $\C$-algebra $\C^{\Sigma_E}$
of all functions $\phi: \Sigma_E \to \C$. The action of $\Aut(\C)$ induces the
semilinear action of $\Aut(\C)$ on $\C^{\Sigma_E}$ as follows.
$$\phi \mapsto \{\sigma \mapsto s(\phi(s^{-1}\sigma)\} \ \forall \ s \in
\Aut(\C).$$ The semilinearity means that
$$s(z\phi)=s(z)s(\phi) \ \forall \ z\in \C.$$
Let us consider a $\C$-linear map of $[E:\Q]$-dimensional $\C$-algebras
$$\kappa_E: E\otimes_{\Q}\C \to \C^{\Sigma_E}, \ e\otimes z \mapsto \{\sigma
\mapsto z\sigma(e)\} \ \forall \ e\in E, z\in \C$$ (see
\cite[(2.2.2)]{Deligne2}). Clearly, $\kappa_E$ is $\Aut(\C)$-equivariant. (Here
$\Aut(\C)$ acts on $E\otimes_{\Q}\C$ through its second factor in the obvious
way.) It follows from Artin's theorem on linear independence of multiplicative
characters \cite[Ch. VI, Sect. 4, Th. 4.1]{Lang} that $\kappa_E$ is injective;
now the coincidence of dimensions implies that $\kappa_E$ is an isomorphism of
$\C$-vector spaces that commutes with $\Aut(\C)$-actions. On the other hand,
for each $\sigma \in \Sigma_E$  the natural surjection
$$E\otimes_{\Q}\C \twoheadrightarrow E\otimes_{E,\sigma}\C=:\C_{\sigma}=\C$$
obviously coincides with the composition of $\kappa_E$ and
$$\C^{\Sigma_E}\to \C, \phi \mapsto \phi(\sigma).$$
This allows us to identify $\C^{\Sigma_E}$ and
$$\oplus_{\sigma\in\Sigma_E}\C_{\sigma}=\oplus_{\sigma\in\Sigma_E}\C$$ and we
may view $\kappa_E$ as an isomorphism
$$E\otimes_{\Q}\C \cong \sum_{\sigma\in \Sigma_E}\C_{\sigma}=\sum_{\sigma\in \Sigma_E}
E\otimes_{E,\sigma}\C.$$ Further we will identify $E\otimes_{\Q}\C$ with
$$\C^{\Sigma_E}=\oplus_{\sigma\in\Sigma_E}\C$$ via $\kappa_E$.
\end{sect}

\begin{sect}
\label{tracefield} Let $\G$ be a (finite) automorphism group of the field $E$
and let $F=E^{\G}$ be the subfield of $\G$-invariants. One may view $\G$ as a
certain group of automorphisms of the $\C$-algebra $E\otimes_{\Q}\C$ where $\G$
acts through the first factor. Clearly,
$$(E\otimes_{\Q}\C)^{\G}=E^{\G}\otimes_{\Q}\C=F\otimes_{\Q}\C;$$
in addition, if $$z\in E = E\otimes 1 \subset E\otimes_{\Q}\C$$ then
$$w=\Tr_{E/F}(z) \in F = F\otimes 1 \subset F\otimes_{\Q}\C$$ where
$$\Tr_{E/F}: E \to F$$
is the trace map that corresponds to the finite field extension $E/F$.

 It is
also clear that the corresponding action of $\G$ on $\C^{\Sigma_E}$ induced by
$\kappa_E$ could be described as follows. Every $s\in\G$ sends a function
$\phi:\Sigma_E \to \C$ to the function $\sigma \mapsto \phi(\sigma s)$.

If $z \in E\otimes_{\Q}\C$ then $w=\sum_{s\in \G}sz \in
(E\otimes_{\Q}\C)^{\G}=F\otimes_{\Q}\C$. If $\kappa_E(z)$ is a function $\phi$
on $\Sigma_E$ and $\kappa_F(w)$ is a function $\psi$ on $\Sigma_F$ then one may
easily check that for each field embedding $\sigma_F:F \hookrightarrow \C$
$$\psi(\sigma_F)=\sum \phi(\sigma)$$ where the sum is taken over all field
embeddings $\sigma:E \hookrightarrow \C$, whose restriction to $F$ coincides
with $\sigma_F$. In other words, if $\sigma$ is one of those embeddings then
$\psi(\sigma_F)=\sum_{s\in \G} \phi(\sigma s)$.

\end{sect}

\begin{sect}
\label{CMfields} Assume that $E$ is a CM-field and let $c_0 \in \Aut(E/\Q)$ be
the ``complex conjugation", i.e., the involution, whose subfield  of invariants
consists of all totally real elements of $E$. Since $E$ is CM, we have
$$\sigma c_0=\iota \sigma=\bar{\sigma} \ \forall \ \sigma \in \Sigma_E.$$
Let us consider the $[E:\Q]/2$-dimensional $\Q$-vector subspace
$$E_{-}:=\{e\in E \mid c_0(e)=-e\}\subset E$$
of $c_0$-antiinvariants. The involution $c_0$ gives rise to the involutions of
$\C$-algebras
$$E\otimes_{\Q}\C \to E\otimes_{\Q}\C , \ e\otimes z \mapsto c_0(e)\otimes z;$$
$$\C^{\Sigma_E} \to \C^{\Sigma_E}, \ \phi(\sigma) \mapsto \phi(\sigma
c_0)=\phi(\bar{\sigma}),$$ which we still denote by $c_0$. Clearly, $\kappa_E$
is $c_0$-equivariant. It is also clear that the $\C$-subspace of
$c_0$-antiinvariants in $E\otimes_{\Q}\C$ coincides with $E_{-}\otimes_{\Q}\C$
and the $\C$-subspace of $c_0$-antiinvariants in $\C^{\Sigma_E}$ coincides with
the subspace $X_{E,\C}$ of all functions $\phi$ that satisfy
$$\phi(\bar{\sigma})= - \phi(\sigma) \ \forall \ \sigma \in \Sigma_E.$$

Let $X_E\subset X_{E,\C}$ be the $\Q$-vector subspace  that consists of all
functions $\phi: \Sigma_E \to \Q\subset \C$ with
$$\phi(\bar{\sigma})+\phi(\sigma)=0 \ \forall \sigma\in \Sigma_E.$$
Clearly, $X_E$ is a $\Aut(\C)$-invariant $\Q$-vector subspace of
$\C^{\Sigma_E}$ and we get the natural homomorphism
$$\Aut(\C)\twoheadrightarrow \Gal(\bar{\Q}/\Q)\to \Aut_{\Q}(X_E).$$
Clearly, $\iota$ acts on $X_E$ as multiplication by $-1$.

%%%%%%%%%%%%new stuff added July 6%%%%%%%%
Let
$$E^{+}=\{e\in E \mid c_0(e)=e\}$$
be the maximal totally real subfield of $E$. Clearly, $E$ is a quadratic extension of
$E^{+}$ with the Galois group $\{1, c_0\}$.  The corresponding norm map $E \to E^{+}$ coincides with the map
$$e \mapsto e \cdot c_0(e).$$
Let us extend $c_0$ by $\C$-linearity  to the $\C$-linear algebra automorphism
$$E_{\C} \to E_{\C}, \  e\otimes z \mapsto c_0(e)\otimes z,$$
which we continue to denote by  $c_0$. The corresponding automorphism of $\C^{\Sigma_E}$ (via $\kappa_E$ sends a function
$h: \Sigma_E\to \C$ to the function $\sigma \mapsto h(\sigma c_0)$.

Let $R_{E/\Q}\G_m$ and  $R_{E^{+}/\Q}\G_m$ be the algebraic $\Q$-tori obtained by the Weil restriction of scalars from the multiplicative group $\G_m$ to $\Q$ from $E$ and $E^{+}$ respectively.
For every commutative $\Q$-algebra $A$
$$R_{E/\Q}\G_m(A)=(A\otimes_{\Q}E)^*,  \  R_{E^{+}/\Q}\G_m(A)=(A\otimes_{\Q}E^{+})^{*}.$$
Clearly, $R_{E^{+}/\Q}\G_m$ is an algebraic $\Q$-subgroup of
$R_{E/\Q}\G_m$.  Again let us define the $A$-linear algebra
automorphism
$$A\otimes_{\Q}E \to A\otimes_{\Q}E, \ a\otimes e \mapsto a \otimes c_0(e),$$
which we continue denote by $c_0$. Clearly, the subalgebra of $c_0$-invariants
coincides with $A\otimes_{\Q}E^{+}$. The homomorphisms
$$(A\otimes_{\Q}E)^* \to (A\otimes_{\Q}E^{+})^{*}, \ b \mapsto b \cdot c_0(b)$$
gives rise to the $\Q$-homomorphism of algebraic  $\Q$-tori
$$R_{E/\Q}\G_m \to R_{E^{+}/\Q}\G_m,$$  whose kernel $\TT_E$ is called
the {\sl norm torus}. By definition, $$\TT_E(A)=\{b \in
(A\otimes_{\Q}E)^*\mid b \cdot c_0(b)=1 \}.$$ In particular,
$$\TT_E(\C)=\{u \in E_{\C}\mid u \cdot c_0(u)=1 \}=\kappa_E^{-1}\{h:
\Sigma_E \to \C \mid h(\sigma) h(\sigma c_0)=1 \ \forall \ \sigma\}.$$
It is well known \cite{Voskresenskii} that the norm torus is an
algebraic $\Q$-torus; in particular, it is a connected algebraic
$\Q$-group.

Let $\Q[\epsilon]=\Q\oplus \Q\cdot \epsilon$ be the $\Q$-algebra of
{\sl dual numbers}: $\epsilon^2=0$.  One may naturally identify the
$\Q$-Lie algebra $\Lie(R_{E/\Q}\G_m)$ with $E$: namely, each $e\in E$
corresponds to $1+\epsilon \otimes e \in
(\Q[\epsilon]\otimes_{\Q}E)^{*}$.  The corresponding $\Q$-Lie
subalgebras of $R_{E^{+}/\Q}\G_m$ and $\TT_E$ coincide with $E^{+}$ and
$E_{-}$ respectively.

%%%%%%%%%%%%%%%%%%%%%%%%%%%%%%%%%%

Suppose that $E$ is a CM field that is {\sl normal} over $\Q$ and fix a field
embedding $E \hookrightarrow \bar{\Q}\subset\C$. Further, we view $E$ as a
subfield of $\C$. Then $\sigma(E)=E$ for all $\sigma$, the involution $c_0$
coincides with the restriction of the complex conjugation $\iota$ to $E$. In
addition, $c_0$ is a {\sl central} element of the Galois group $\Gal(E/\Q)$.
The set $\Sigma_E$ ``coincides" with $\Gal(E/\Q)$. In addition, the action of
$\Aut(\C)$ on $\Sigma_E=\Gal(E/\Q)$ factors through $\Gal(E/\Q)$ and
corresponds to the left translations. The action of $\Aut(\C)$ on $X_E$ factors
through $\Gal(E/\Q)$ and this action admits the following description.
$$\tau(f)(\sigma)=f(\tau^{-1}\sigma) \ \forall \tau\in \Gal(E/\Q), \sigma\in
\Sigma_E=\Gal(E/\Q), f \in X_E.$$ If we consider the $\Q$-vector (sub)space
$$E_{-}=\{e\in E\mid c_0(e)=-e\}\subset E$$
then
$$\kappa_E(E_{-}\otimes_{\Q}\C)=X_{E,\C}.$$
Clearly,
$$\dim_{\Q}(E_{-})=\frac{1}{2}[E:\Q]=\dim_{\Q}(X_E).$$
\end{sect}
\noindent \textbf{Caution:} Although
$\kappa_E(E_{-}\otimes_{\Q}\C)=X_{E,\C}$, it is \textbf{not} true that
$\kappa_E(E_{-})= X_E$ unless both are 0. Indeed, for any nonzero $e\in E_{-}$, the function
$\kappa_E(e)$ takes value $\sigma(e)$ at $\sigma\in \Sigma_E$,
which is never real, while $X_E$ consists of $\Q$-valued functions.

%%%%%%%%%%%%%%%%new stuff added 6.22.09%%%%%%%%%%%%%%%%%%%%%
\begin{rem}
%Another way to define a structure of a $\Gal(E/\Q)$-module on $X_E$ is as
%follows.
%$$\tau(f)(\sigma)=f(\sigma\tau) \ \forall \tau\in \Gal(E/\Q), \sigma\in
%\Sigma_E=\Gal(E/\Q), \ f \in X_E.$$
The $\Gal(E/\Q)$-module $X_E$ is faithful. Indeed, let us consider the function
$f$ on $\Sigma_E=\Gal(E/\Q)$ that takes on value $1$ on the identity element of
$\Gal(E/\Q)$, value $-1$ on $c_0$ and zero elsewhere. Then $f\in X_E$ but
$\tau(f) \ne f$ if $\tau$ is {\sl not} the identity element of $\Gal(E/\Q)$.
\end{rem}

\begin{defn}
We write $\max(E)$ for the largest $\Q$-dimension of simple
$\Gal(E/\Q)$-submodules of $X_E$. Clearly, $\max(E)\le \dim_{\Q}(X_E)$; the
equality holds if and only if $X_E$ is simple.
\end{defn}

\begin{lem}\label{lem:simple-module}
  Let $G=\Gal(E/\Q)$ and $W$ be a simple $\Q[G]$-module such that the
  involution $c_0$ acts on $W$ as multiplication by $-1$. Then there exists an
  injective homomorphism of $\Q[G]$-modules $W\hookrightarrow X_E$. In
  particular, $X_E$ contains a $\Q[G]$-submodule that is isomorphic to
  $W$.
\end{lem}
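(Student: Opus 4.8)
The plan is to realize $X_E$ as a suitable permutation-type module and use semisimplicity of $\Q[G]$ together with a multiplicity/duality argument. First I would observe that $G=\Gal(E/\Q)$ acts on $\Sigma_E=G$ by right translation in the description of $X_E$ given above, so that $\C^{\Sigma_E}$ is (noncanonically) the regular representation and $X_E$ is the $(-1)$-eigenspace of the central involution $c_0$ acting by $\phi\mapsto\phi(\cdot\,c_0)$. Since $c_0$ is central, this eigenspace is a $\Q[G]$-submodule, and in fact $\Q[G]$ decomposes as a direct sum of the $(+1)$- and $(-1)$-eigenspaces of $c_0$ (because $c_0^2=1$ and $2$ is invertible), so $X_E$ is exactly the direct summand $e_{-}\Q[G]$ where $e_{-}=\tfrac12(1-c_0)$ is the central idempotent cutting out the representations on which $c_0$ acts by $-1$.

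The key step is then: a simple $\Q[G]$-module $W$ on which $c_0$ acts by $-1$ satisfies $e_{-}W=W$, hence $W$ is a simple module over the ring $e_{-}\Q[G]$. By Wedderburn/semisimplicity, $e_{-}\Q[G]$ is a semisimple $\Q$-algebra, and it is isomorphic to its own opposite acting on itself, so \emph{every} simple module of $e_{-}\Q[G]$ occurs as a submodule of the left regular module $e_{-}\Q[G]$. But $e_{-}\Q[G]\cong X_E$ as a left $\Q[G]$-module. Therefore $W$ embeds $\Q[G]$-linearly into $X_E$, which is exactly the claim. Concretely, one can even point to the explicit element exhibited in the Remark preceding the lemma — the function $f$ supported on $\{1,c_0\}$ with values $1,-1$ — which generates $X_E$ as... well, not as a $\Q[G]$-module in general, but it shows $X_E\ne 0$; the clean statement is the idempotent description $X_E=e_{-}\Q[G]$.

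An alternative, perhaps more self-contained route avoiding the identification with $e_{-}\Q[G]$: take any nonzero $\Q[G]$-homomorphism $W\to X_E$ and argue it is automatically injective because $W$ is simple, so it suffices to produce \emph{some} nonzero map. For that, use that $W$ is a direct summand of $\C^{\Sigma_E}$ after tensoring (or of $\Q[G]$ itself) — more precisely, since $\Q[G]$ surjects onto $W$ and the surjection kills $e_{+}\Q[G]$ (as $c_0$ acts by $+1$ there but by $-1$ on $W$), it factors through $e_{-}\Q[G]=X_E$, giving a surjection $X_E\twoheadrightarrow W$; then by semisimplicity this surjection splits, producing the desired embedding $W\hookrightarrow X_E$. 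Either way the final sentence of the lemma is immediate.

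The main obstacle, such as it is, is purely bookkeeping: making sure the $G$-action on $X_E$ used in the statement (left translation, $\tau(f)(\sigma)=f(\tau^{-1}\sigma)$) matches the left $\Q[G]$-module structure on $e_{-}\Q[G]$, and checking that $c_0$ indeed acts by $-1$ on $X_E$ (it does, since $\iota=c_0$ on $E$ and the Caution/Remark passage records that $\iota$ acts as $-1$ on $X_E$). No deep input is needed — just semisimplicity of the rational group algebra of a finite group and the fact that a semisimple algebra contains each of its simple modules inside its regular representation.
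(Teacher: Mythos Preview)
Your argument is correct, but it takes a different route from the paper's. The paper constructs an explicit matrix-coefficient map: pick any nonzero $\lambda\in\Hom_{\Q}(W,\Q)$ and send $x\in W$ to the function $\sigma\mapsto\lambda(\sigma^{-1}x)$; one checks directly that this is a nonzero $\Q[G]$-map landing in $X_E$ (using $c_0x=-x$), and simplicity of $W$ forces injectivity. No appeal to Maschke or Wedderburn is made.

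Your approach instead identifies $X_E$ structurally as $e_{-}\Q[G]$ for the central idempotent $e_{-}=\tfrac12(1-c_0)$ (using that $c_0$ is central in $G$, which holds because $E$ is a normal CM field), and then invokes semisimplicity to say every simple module annihilated by $e_{+}$ sits inside $e_{-}\Q[G]$. This is a cleaner global description of $X_E$ and makes the lemma an immediate consequence of the fact that a semisimple algebra contains each of its simple modules in its regular representation; it also transparently explains \emph{why} the hypothesis ``$c_0$ acts by $-1$'' is exactly what is needed. The paper's argument, by contrast, is more elementary and self-contained (no structure theory), and produces a concrete embedding that one could in principle compute with. One small bookkeeping point worth making explicit in your write-up: the defining condition $\phi(\bar\sigma)=-\phi(\sigma)$ for $X_E$ translates, via $\bar\sigma=\sigma c_0$, into the \emph{right}-multiplication eigenspace condition $\phi\cdot c_0=-\phi$, so $X_E\cong\Q[G]e_{-}$ as a left $\Q[G]$-module; centrality of $c_0$ then lets you write this indifferently as $e_{-}\Q[G]$.
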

\begin{proof}
  Fix a nonzero linear function $\lambda\in \Hom_{\Q}(W,\Q)$ and
  consider the $\Q$-linear map
\[ \pi_{\lambda}: W\to \Q^{\Sigma_E}, \quad x \mapsto \{\sigma\mapsto
\lambda(\sigma^{-1}x)\}.\]
Clearly $\pi_{\lambda}$ is nonzero.
For all $x\in W$, $\sigma\in \Sigma_E$ and $\tau \in G$,  we have
\[\pi_{\lambda}(\tau x)(\sigma)=\lambda(\sigma^{-1}\tau
x)=\lambda((\tau^{-1}\sigma)^{-1}x)=\pi_{\lambda}(x)(\tau^{-1}\sigma)=\tau(\pi_{\lambda}(x))(\sigma).\]
Hence $\pi_{\lambda}$ is a map of $\Q[G]$-modules. In particular, if
we choose $\tau$ to be the involution $c_0$, then
\[ c_0(\pi_{\lambda}(x))=\pi_{\lambda}(c_0 x)=\pi_{\lambda}(-x)=-\pi_{\lambda}(x).\]
It follows that $\pi_{\lambda}(W)\subseteq X_E$ and we have a map of
$\Q[G]$-modules $W\to X_E\subset \Q^{\Sigma_E}$ that is still denoted
by $\pi_{\lambda}$. Now the lemma follows since $W$ is simple and
$\pi_{\lambda}$ is nonzero.
\end{proof}

\begin{exs}
\label{maxcycl} (i) Suppose that $\Gal(E/\Q)=\langle c_0\rangle \times H$
where  $H$ is a cyclic subgroup of order $M$ in $G$. Let us consider the
$G$-module $\Q(\zeta_{M})$ where the group $H\cong \mu_{M}$ acts via
multiplication by $M$th roots of unity and $c_0$ acts as multiplication by
$-1$. Clearly, $\Q(\zeta_{M})$ is simple and
$\dim_{\Q}(\Q(\zeta_{M}))=\varphi(M)$. It follows from Lemma
\ref{lem:simple-module} that the $G$-module $X_E$ contains a submodule that is
isomorphic to $\Q(\zeta_{M})$. In particular, $\max(E) \ge \varphi(M)$. (In
fact, one may prove that $\max(E) = \varphi(M)$.)

\smallskip

(ii) Suppose that $G$ is a cyclic group of order $2M$. Then $c_0$ is its only element
of order $2$. Let us consider the $G$-module $\Q(\zeta_{2M})$ where the group
$G\cong \mu_{2M}$ acts via multiplication by $2M$th roots of unity. Clearly,
$c$ acts on $\Q(\zeta_{2M})$ as multiplication by $-1$. It is also clear that
the $G$-module $\Q(\zeta_{2M})$ is simple. It follows again that the $G$-module $X_E$
contains a submodule that is isomorphic to $\Q(\zeta_{2M})$. In particular,
$\max(E) \ge \varphi(2M)$. (In fact, one may prove that $\max(E) =
\varphi(2M)$.)
\end{exs}

%%%%%%%%%%%%%%%%%%%%%%%%%%%%

\begin{lem}
\label{CMdim}
 Let $E$ be a CM-field that is normal over $\Q$ and let us fix an
embedding $E \hookrightarrow \C$. (Further we we view $E$ as a subfield of
$\C$.) Let $h:\Sigma_E\to \Q\subset\C$ be a $\Q$-valued function on $\Sigma_E$
that lies in $X_E$. Let $W$ be the $\Q$-vector subspace of $X_E$ generated by
all $\tau(h):\sigma\mapsto h(\tau^{-1}\sigma)$ where $\tau$ runs through $\Gal(E/\Q)$. Let $\q$ be the smallest
$\Q$-vector (sub)space of $E_{-}$ such that $\kappa_E(\q_{\C})$ contains $h$.
Then
$$\dim_{\Q}(\q)=\dim_{\Q}(W).$$
In particular, $\q=E_{-}$ if and only if $W=X_E$.
\end{lem}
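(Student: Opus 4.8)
The plan is to set up two $\Q[G]$-module maps — one from $E_-$ to $\C^{\Sigma_E}$ via $\kappa_E$, and one relating $\C^{\Sigma_E}$ back to functions — and then compare the cyclic $\Q[G]$-modules generated by $h$ on each side. Write $G=\Gal(E/\Q)$. The key observation is that since $E$ is normal, $\Sigma_E$ is identified with $G$ with $G$ acting by left translation, so both $X_E$ and $E_-$ (sitting inside $E_\C = \C^{\Sigma_E}$) are $\Q[G]$-modules; the Caution above warns that $\kappa_E(E_-) \ne X_E$, but the point is that $W$ and $\q$ are the \emph{cyclic} submodules generated by $h$ in two comparable modules, and cyclic submodules of $\Q[G]$-modules have dimension controlled by the annihilator ideal in $\Q[G]$.

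First I would make precise what $\q$ is: since $\kappa_E : E_-\otimes_\Q\C \xrightarrow{\sim} X_{E,\C}$ is a $\C$-linear $G$-equivariant isomorphism (from Section~\ref{CMfields}), and $h \in X_E \subset X_{E,\C}$, there is a unique smallest $\C$-subspace of $X_{E,\C}$ containing $h$ that is of the form $\kappa_E(\q_\C)$ for a $\Q$-subspace $\q\subseteq E_-$; concretely $\q_\C = \kappa_E^{-1}(\C\text{-span of the }G\text{-orbit of }h)$ is $G$-stable, hence $\q = (\q_\C)^{\Aut(\C)}$ is a $\Q[G]$-submodule of $E_-$, and $\dim_\Q \q = \dim_\C \q_\C = \dim_\C(\C\text{-span of }G h)$. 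Meanwhile $W = \Q\text{-span of }G h \subseteq X_E$, and I claim $\dim_\Q W = \dim_\C(\C\text{-span of }Gh \text{ inside } \C^{\Sigma_E})$. This is the crux: the $\Q$-linear span of the finite set $\{\tau h : \tau\in G\}$ of $\Q$-valued functions has the same dimension over $\Q$ as its $\C$-linear span inside $\C^{\Sigma_E}$, because a $\Q$-linear dependence among $\Q$-vectors is equivalent to a $\C$-linear dependence (a matrix with rational entries has the same rank over $\Q$ as over $\C$). Combining, $\dim_\Q \q = \dim_\C(\C\text{-span of }Gh) = \dim_\Q W$.

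The last sentence then follows: $\dim_\Q W = \dim_\Q X_E = \tfrac12[E:\Q] = \dim_\Q E_-$ together with $W\subseteq X_E$ and $\q\subseteq E_-$ forces $W = X_E \iff \q = E_-$.

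I expect the main obstacle to be the bookkeeping in the first step — verifying that the smallest $\q$ of the prescribed form exists, is a $\Q[G]$-module, and satisfies $\kappa_E(\q_\C) = \C\text{-span of }Gh$ — i.e.\ checking that "smallest $\Q$-subspace $\q$ of $E_-$ with $h\in\kappa_E(\q_\C)$" really is the $\Aut(\C)$-descent of the complex span of the $G$-orbit, using that $h$ is $\Q$-valued and $G$-orbits are finite so the complex span is $\Aut(\C)$-stable. Everything after that is the rank-over-$\Q$-equals-rank-over-$\C$ principle plus a dimension count.
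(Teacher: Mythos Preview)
Your proposal is correct and follows essentially the same route as the paper: both arguments identify the $\C$-span $\tilde W$ of the $G$-orbit of $h$ as the smallest $\Aut(\C)$-stable complex subspace of $\C^{\Sigma_E}$ containing $h$, descend it to the $\Q$-subspace $\q$ (the paper does this via an auxiliary $\q'$ and a two-sided minimality squeeze, you via $\Aut(\C)$-invariants), and then invoke the rank-over-$\Q$-equals-rank-over-$\C$ principle on the matrix $(h(s^{-1}\sigma))_{s,\sigma}$ to get $\dim_\Q W=\dim_\C\tilde W=\dim_\Q\q$. One small wording fix: in your middle paragraph the clause ``is $G$-stable, hence $\q=(\q_\C)^{\Aut(\C)}$'' should read ``is $\Aut(\C)$-stable'' --- $G$-stability alone does not give descent, though you correctly supply the missing justification (that $h$ is $\Q$-valued, so the $\Aut(\C)$-orbit and the $G$-orbit of $h$ coincide) in your final paragraph.
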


\begin{proof}
By definition, $W\subset X_E$ is the $\Q$-vector subspace generated by
functions $h(s^{-1}\sigma)$, $s \in \Gal(E/\Q)$. Clearly, $\dim_{\Q}(W)$
coincides with the rank of the matrix $(a_{s,\sigma})=(h(s^{-1}\sigma))$  over
the rationals with $s \in \Gal(E/\Q), \sigma \in \Sigma_E$. Let $\tilde{W}
\subset X_{E,\C}$ be the $\C$-vector (sub)space generated by functions
$h(s^{-1}\sigma)$, $s \in \Gal(E/\Q)$.  Clearly, $\dim_{\C}(\tilde{W})$
coincides with the rank of the matrix $(a_{s,\sigma})=(h(s^{-1}\sigma))$ over
the complex numbers with $s \in \Gal(E/\Q), \sigma \in \Sigma_E$. In
particular,
$$\dim_{\Q}(W)=\dim_{\C}(\tilde{W}).$$
It is also clear that $\tilde{W}$ is the smallest $\Aut(\C)$-invariant complex
vector subspace of $\C^{\Sigma_E}$ that contains $h(\sigma)$. It follows that there
exists a $\Q$-vector subspace $\q^{\prime}\subset E$ such that
$\q^{\prime}_{\C}=\q^{\prime}\otimes_{\Q}\C$ coincides with $\tilde{W}$. In
particular
$$\dim_{\Q}(\q^{\prime})=\dim_{\C}(\tilde{W}).$$
The minimality property of $\q$ implies that $\q\subset \q^{\prime}$ and
therefore
$$h \in \q_{\C}\subset \q^{\prime}_{\C}=\tilde{W}.$$
The minimality property of $\tilde{W}$ implies that $\q_{\C}=\tilde{W}$ and
therefore $\q_{\C}= \q^{\prime}_{\C}$. Since $\q\subset \q^{\prime}$, we
conclude that $\q=\q^{\prime}$. In order to finish the proof, one has only to
recall that
$$\dim_{\Q}(\q)=\dim_{\C}(\tilde{W})=\dim_{\Q}(W).$$
\end{proof}

\begin{sect}
\label{fieldproducts} Let $t$ be a positive integer and suppose that for each
positive $j\le t$ we are given a number field $E_j$. For the sake of
simplicity, let us assume that every $E_j$ is {\sl normal} over $\Q$ and write
$\Gal(E_j/\Q)$ for the corresponding Galois group. Further, we fix an embedding
of $E_j$ into $\C$; this allows us to identify $\Sigma_{E_j}$ and
$\Gal(E_j/\Q)$. Let us consider the product $$\E =\prod_{j=1}^t
E_j=\oplus_{j=1}^t E_j.$$ Clearly, $\E$ is is a finite-dimensional semisimple
commutative $\Q$-algebra and the set $\Sigma_{\E}$ of algebra homomorphisms $\E
\to \C$ that send $1$ to $1$ could be naturally identified with the disjoint
union $\coprod _{j=1}^t \Sigma_{E_j}$ of $\Sigma_{E_j}$'s. Taking the product
of $\kappa_{E_j}$'s, we get the natural isomorphism of $\C$-algebras
$$\kappa_{\E}:\E_{\C} \cong \C^{\Sigma_{\E}},$$
which sends $\{e_j\}_{j=1}^t\otimes z$ to the function
$$\Sigma_{\E}=\coprod _{j=1}^t \Sigma_{E_j} \to \C$$
that coincides with $\sigma \mapsto \sigma(e_j) z$ on $\Sigma_{E_j}$. As above,
we identify $\E_{\C}$ with  the space of functions $\C^{\Sigma_{\E}}$ via
$\kappa_{\E}$. Again, there is the natural semilinear action of $\Aut(\C)$ on
$\E_{\C}$, whose subalgebra of invariants coincides with $\E\otimes 1=\E$. An
automorphism $\tau\in \Aut(\C)$ sends function $h:\Sigma_{\E}\to \C$ to the
function $\tau(h):=\{\sigma \to \tau(h(\tau^{-1}\sigma))\}$. We have a
$\Aut(\C)$-invariant splitting
$$\C^{\Sigma_{\E}}=\oplus_{j=1}^t \C^{\Sigma_{E_j}}.$$
Clearly, every function $h:\Sigma_{\E} \to \C$ may be viewed as a collection
$\{h_j\}_{j=1}^t$ of functions $h_j: \Sigma_{E_j}\to \C$. The $\Q$-vector
(sub)space $\Q^{\Sigma_{\E}}$ of $\Q$-valued functions is $\Aut(\C)$-invariant;
in addition, the action of $\Aut(\C)$ on $\Q^{\Sigma_{\E}}$ factors through
$\Gal(\bar{\Q}/\Q)$ and for all $\Q$-valued functions $h$
$$\tau(h)(\sigma)=h(\tau^{-1}\sigma) \ \forall \sigma\in\Sigma_{\E}, \tau \in
\Gal(\bar{\Q}/\Q).$$ We have a $\Gal(\bar{\Q}/\Q)$-invariant splitting
$$\Q^{\Sigma_{\E}}=\oplus_{j=1}^t \Q^{\Sigma_{E_j}}.$$
\end{sect}

\begin{lem}
\label{periodDIM}
 Let $h=\{h_j\}_{j=1}^t$ be a function on $\Sigma_{\E}$
that takes on only rational values, i.e., $h_j(\Sigma_{E_j})\subset \Q \
\forall j$. Let $W$ (resp. $W_j$) be the $\Q$-vector subspace generated by all
$\tau(h)$ (resp. $\tau(h_j)$) where $\tau$ runs through $\Gal(\bar{\Q}/\Q)$. We
have
$$W\subset \Q^{\Sigma_{\E}}, \  W_j\subset \Q^{\Sigma_{E_j}}, \ W \subset
\oplus_{j=1}^t W_j.$$ On the other hand, let $\q$ (resp. $\q_j$) be the
smallest $\Q$-vector subspace of $\E$ (resp. of $E_j$ such that
$\kappa_{\E}(\q_{\C})$ contains $h$ (resp. $\kappa_{E_j}(\q_j\otimes_{\Q}\C)$
contains $h_j$). Then
$$\dim_{\Q}(W)=\dim_{\Q}(\q); \ \dim_{\Q}(W_j)=\dim_{\Q}(\q_j) \ \forall j.$$
\end{lem}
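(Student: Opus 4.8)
The plan is to deduce Lemma~\ref{periodDIM} from Lemma~\ref{CMdim} (or rather from the part of its proof that does not use the CM hypothesis) by exploiting the $\Aut(\C)$-equivariant direct sum decompositions $\C^{\Sigma_\E}=\oplus_{j=1}^t\C^{\Sigma_{E_j}}$ and $\E_\C=\oplus_{j=1}^t (E_j)_\C$. First I would observe that the statements about $W$, $W_j$ being contained in the spaces of $\Q$-valued functions are immediate, since $h$ and the $h_j$ are $\Q$-valued and $\Gal(\bar\Q/\Q)$ permutes the (finitely many) embeddings, so $\tau(h)(\sigma)=h(\tau^{-1}\sigma)$ is again $\Q$-valued; the inclusion $W\subset\oplus_j W_j$ follows because the projection $\C^{\Sigma_\E}\to\C^{\Sigma_{E_j}}$ sends $\tau(h)$ to $\tau(h_j)$, so each component of $W$ lands in $W_j$.

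For the dimension equalities, the key point is the general principle already used in the proof of Lemma~\ref{CMdim}: for a $\Q$-valued function $g$ on a finite $\Aut(\C)$-set, the $\Q$-span $V$ of its $\Gal(\bar\Q/\Q)$-translates and the $\C$-span $\tilde V$ of the same translates satisfy $\dim_\Q(V)=\dim_\C(\tilde V)$, because both equal the rank of the matrix $(g(\tau^{-1}\sigma))_{\tau,\sigma}$, a matrix with rational entries (its rank over $\Q$ equals its rank over $\C$). Moreover $\tilde V$ is the smallest $\Aut(\C)$-stable $\C$-subspace of $\C^{\Sigma}$ containing $g$, hence by the Galois-descent statement in Sect.~2.1 there is a unique $\Q$-subspace $\q'$ of the ambient $\Q$-algebra with $\q'_\C=\tilde V$, and the minimality of $\q$ (the smallest $\Q$-subspace with $\kappa(\q_\C)\ni g$) forces $\q=\q'$ exactly as in Lemma~\ref{CMdim}. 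Applying this with $g=h$ on $\Sigma_\E$ and with $g=h_j$ on each $\Sigma_{E_j}$ yields $\dim_\Q(W)=\dim_\Q(\q)$ and $\dim_\Q(W_j)=\dim_\Q(\q_j)$, which is exactly the assertion.

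Concretely I would structure the write-up as: (1) record the trivial inclusions; (2) state the rank lemma once, noting it is literally the argument in the proof of Lemma~\ref{CMdim} with the CM-normality hypothesis deleted — the only properties used are that $\Aut(\C)$ acts through a finite quotient on $\Sigma_\E$ and that Galois descent identifies $\Aut(\C)$-stable $\C$-subspaces of $\E_\C$ with $\Q$-subspaces of $\E$; (3) apply it to $h$ and to each $h_j$. One subtlety worth a sentence: here the fields $E_j$ need not be distinct and $\E$ is only semisimple rather than a field, but the isomorphism $\kappa_\E:\E_\C\cong\C^{\Sigma_\E}$ and its $\Aut(\C)$-equivariance (Sect.~\ref{fieldproducts}) are all that is needed, so nothing changes.

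I do not expect a genuine obstacle here; the lemma is essentially a repackaging of Lemma~\ref{CMdim}. The only thing to be careful about is bookkeeping — making sure the three spaces $W$, $\oplus_j W_j$, and $\oplus_j \q_j$ are kept distinct from $\q$, since in general $W$ is a proper subspace of $\oplus_j W_j$ and correspondingly $\q$ a proper subspace of $\oplus_j\q_j$; the lemma only asserts equality of the two ``diagonal'' dimensions $\dim_\Q W=\dim_\Q\q$ and the componentwise ones $\dim_\Q W_j=\dim_\Q\q_j$, not any relation forcing $\q=\oplus_j\q_j$.
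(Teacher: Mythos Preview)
Your proposal is correct and is exactly what the paper intends: its proof of Lemma~\ref{periodDIM} consists of the single sentence that the argument of Lemma~\ref{CMdim} goes through verbatim and is left to the reader. Your write-up, including the remark that the CM/normality hypothesis plays no role and that only the $\Aut(\C)$-equivariance of $\kappa_\E$ and Galois descent are used, is precisely the expected fleshing-out.
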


\begin{proof}
The proof could be carried out by the same arguments as the proof of Lemma
\ref{CMdim} and is left to the reader.
\end{proof}

If $F$ is a subfield of $E_t$ then we write $\Tr_{E_t/F}: E_t \to F$ for the
corresponding $\Q$-linear trace map.  Extending $\Tr_{E_t/F}$ by
$\C$-linearity, we get a $\C$-linear map
$$E_t \otimes_{\Q}\C \to F\otimes_{\Q}\C,$$
which we still denote by $\Tr_{E_t/F}$.

\begin{lem}
  \label{tracehodge}Assume that for all $j$ the field $E_t$ contains
  $E_j$. Let
$$x=\{x_j\}_{j=1}^t \in \prod _{j=1}^t (E_j)_{\C}=\E_{\C}.$$
Suppose that for all $j$
$$x_j=\Tr_{E_t/E_j}(x_t).$$
Let $\q$ (resp. $\q_t$) be the smallest $\Q$-vector subspace of $\E$ such that
$\q_{\C}$ contains $x$ (resp. the smallest $\Q$-vector subspace of $E_t$ such
that $(\q_t)_{\C}$ contains $x_t$). Then
$$\q=\{ (e_j)_{j=1}^t \in \prod _{j=1}^t E_j=\E\mid e_t\in \q_t, e_j=\Tr_{E_t/E_j}(e_t) \ \forall
j\}.$$ In particular, $\dim_{\Q}(\q)=\dim_{\Q}(\q_t)$.
\end{lem}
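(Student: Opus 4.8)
The plan is to transport the entire situation to the single field $E_t$ by means of the trace-diagonal map, after which the statement reduces to the minimality characterization of $\q_t$ together with two elementary facts about extension of scalars. Let $\iota_t\colon E_t\to \E=\prod_{j=1}^tE_j$ be the $\Q$-linear map $e\mapsto(\Tr_{E_t/E_j}(e))_{j=1}^t$ (with $\Tr_{E_t/E_t}=\I$), and let $\pr_t\colon\E\to E_t$ be the projection onto the last factor. Since $\pr_t\circ\iota_t=\I_{E_t}$, the map $\iota_t$ is injective, its image is the $\Q$-subspace
\[ D:=\{(e_j)_{j=1}^t\in\E\mid e_j=\Tr_{E_t/E_j}(e_t)\ \forall j\}, \]
and $\pr_t$ restricts to an isomorphism $D\xrightarrow{\ \sim\ }E_t$ with inverse $\iota_t$. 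The subspace whose description we must establish is exactly $\iota_t(\q_t)=D\cap\pr_t^{-1}(\q_t)$; denote it $\tilde\q$. I will freely use that for a $\Q$-linear map $f\colon V\to V'$ of finite-dimensional $\Q$-vector spaces and a $\Q$-subspace $U\subset V$ one has $f_{\C}(U_{\C})=(f(U))_{\C}$, and that for $\Q$-subspaces $A,B\subset V$ one has $A_{\C}\cap B_{\C}=(A\cap B)_{\C}$ inside $V_{\C}$ (both follow from flatness of $\C$ over $\Q$, or by choosing adapted $\Q$-bases).

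First I would show $\q\subseteq D$. Extending $\iota_t$ by $\C$-linearity gives a map $(\iota_t)_{\C}$ with image $D_{\C}$, and the hypothesis $x_j=\Tr_{E_t/E_j}(x_t)$ says precisely that $(\iota_t)_{\C}(x_t)=x$, so $x\in D_{\C}$. Hence $x\in\q_{\C}\cap D_{\C}=(\q\cap D)_{\C}$, and minimality of $\q$ forces $\q\subseteq\q\cap D$, i.e.\ $\q\subseteq D$. Next I would show $\pr_t(\q)=\q_t$. On one hand $(\pr_t(\q))_{\C}=(\pr_t)_{\C}(\q_{\C})$ contains $(\pr_t)_{\C}(x)=x_t$, so minimality of $\q_t$ yields $\q_t\subseteq\pr_t(\q)$. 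On the other hand $\tilde\q_{\C}=(\iota_t(\q_t))_{\C}=(\iota_t)_{\C}((\q_t)_{\C})$ contains $(\iota_t)_{\C}(x_t)=x$, so minimality of $\q$ yields $\q\subseteq\tilde\q=\iota_t(\q_t)$, whence $\pr_t(\q)\subseteq\pr_t(\iota_t(\q_t))=\q_t$. Thus $\pr_t(\q)=\q_t$.

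Combining the two steps finishes the proof: since $\q\subseteq D$ and $\pr_t|_D$ is an isomorphism onto $E_t$ carrying $\q$ onto $\q_t$, we get $\q=(\pr_t|_D)^{-1}(\q_t)=\iota_t(\q_t)=\tilde\q$, which is exactly the asserted description $\{(e_j)_{j=1}^t\in\E\mid e_t\in\q_t,\ e_j=\Tr_{E_t/E_j}(e_t)\ \forall j\}$; and applying the same isomorphism gives $\dim_{\Q}(\q)=\dim_{\Q}(\q_t)$. I do not expect a genuine obstacle here: the only points demanding care are the two extension-of-scalars facts quoted above and the observation that $\pr_t\circ\iota_t=\I$, which makes $D$ a faithful copy of $E_t$ inside $\E$ on which the whole problem can be read off. (As the authors note for Lemma~\ref{periodDIM}, this is essentially the argument of Lemma~\ref{CMdim} again, now with the extra bookkeeping supplied by the splitting $\E=\bigoplus_j E_j$ and the trace maps.)
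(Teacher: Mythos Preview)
Your proof is correct and follows essentially the same route as the paper's: both arguments define the candidate subspace $\q'=\iota_t(\q_t)$, use minimality of $\q$ to get $\q\subseteq\q'$ (since $x\in\q'_{\C}$), and use the projection $\pr_t$ together with minimality of $\q_t$ to close the loop. The only cosmetic difference is that the paper finishes by comparing dimensions ($\dim_{\Q}\q\le\dim_{\Q}\q'=\dim_{\Q}\q_t\le\dim_{\Q}\pr_t(\q)\le\dim_{\Q}\q$), whereas you appeal directly to the bijectivity of $\pr_t|_D$; your extra step $\q\subseteq D$ via $(\q\cap D)_{\C}=\q_{\C}\cap D_{\C}$ is in fact already implied by $\q\subseteq\tilde\q\subseteq D$.
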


\begin{proof}
Let us put
$$\q^{\prime}=\{ (e_j)_{j=1}^t \in \E\mid e_t\in \q_t, e_j=\Tr_{E_t/E_j}(e_t) \ \forall
j.\}$$  Clearly, $\dim_{\Q}(\q^{\prime})=\dim_{\Q}(\q_t)$ and
$${\q^{\prime}}_{\C}=\{(z_j)_{j=1}^t \in \prod _{j=1}^t (E_j)_{\C}=\E_{\C}\mid z_t\in (\q_t)_{\C}, z_j=\Tr_{E_t/E_j}(z_t) \ \forall
j\}.$$ It is also clear that ${\q^{\prime}}_{\C}$ contains $x$. The minimality
property of $\q$ implies that $\q\subset \q^{\prime}$ and therefore
$$\dim_{\Q}(\q)\le \dim_{\Q}(\q^{\prime})=\dim_{\Q}(\q_t);$$
the equality holds if and only if $\q=\q^{\prime}$. On the other hand, let us
consider the projection map $\q \subset \E=\prod _{j=1}^t E_j
\twoheadrightarrow E_t$. The minimality properties for $\q$ and $\q_t$ imply
that the image of $\q$ coincides with $\q_t$; in particular,
$$\dim_{\Q}(\q)\ge\dim_{\Q}(\q_t).$$
This proves that
$$\dim_{\Q}(\q)=\dim_{\Q}(\q_t)=\dim_{\Q}(\q^{\prime})$$
and therefore $\q=\q^{\prime}$.
\end{proof}

\begin{thm}
\label{TRACEHG} Keep the notation and assumptions of Lemma \ref{periodDIM}.
Assume additionally that
%$E_t/\Q$ is an abelian field extension,
for all $j$ the field $E_t$ contains $E_j$ and for each $\sigma_j \in
\Sigma_{E_j}$ we have $h_j(\sigma_j)=\sum_{\sigma} h_t(\sigma)$ where the sum
is taken across all $\sigma : E_t\hookrightarrow \C$, whose restriction to $E_j$
coincides with $\sigma_j$. Then
$$\q=\{ (e_j)_{j=1}^t \in \E\mid e_t\in \q_t, e_j=\Tr_{E_t/E_j}(e_t) \ \forall
j\}.$$ In particular, $\dim_{\Q}(\q)=\dim_{\Q}(\q_t)$.
\end{thm}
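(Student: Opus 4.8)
The plan is to reduce this statement to Lemma \ref{tracehodge} by translating the hypothesis on the functions $h_j$ into the hypothesis on an element $x \in \E_\C$ via the identifications $\kappa_{E_j}$. First I would set $x = \{x_j\}_{j=1}^t \in \E_\C = \C^{\Sigma_\E}$ to be $\kappa_\E^{-1}(h)$, so that $x_j = \kappa_{E_j}^{-1}(h_j)$ for each $j$. The content of the combinatorial hypothesis ``$h_j(\sigma_j) = \sum_\sigma h_t(\sigma)$, summed over $\sigma$ restricting to $\sigma_j$'' is precisely, by the description of the trace map on $\C^{\Sigma_{E_t}} \to \C^{\Sigma_{E_j}}$ recorded in Sect.~\ref{tracefield} (the formula $\psi(\sigma_F) = \sum \phi(\sigma)$ with the sum over embeddings $\sigma$ of $E_t$ restricting to $\sigma_j$ on $E_j \subseteq E_t$), the assertion that $\Tr_{E_t/E_j}(h_t) = h_j$ as functions; equivalently, after applying $\kappa_{E_j}^{-1}$ and using that $\kappa$ intertwines the algebra-level trace with the function-level trace, that $x_j = \Tr_{E_t/E_j}(x_t)$ in $(E_j)_\C$. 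So the hypotheses of Lemma \ref{tracehodge} are met for this particular $x$.

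Next I would identify the two spaces called $\q$. In Theorem \ref{TRACEHG} (via Lemma \ref{periodDIM}) $\q$ is the smallest $\Q$-subspace of $\E$ with $\kappa_\E(\q_\C) \ni h$, while in Lemma \ref{tracehodge} $\q$ is the smallest $\Q$-subspace of $\E$ with $\q_\C \ni x$; since $\kappa_\E$ is a $\C$-algebra isomorphism carrying $\q_\C$ to $\kappa_\E(\q_\C)$ and carrying $x$ to $h$, these two minimal subspaces coincide. Likewise the two versions of $\q_t$ agree: the smallest $\Q$-subspace of $E_t$ whose complexification contains $x_t$ equals the smallest one whose image under $\kappa_{E_t}$ contains $h_t$. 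Therefore Lemma \ref{tracehodge} applies verbatim and yields
$$\q = \{ (e_j)_{j=1}^t \in \E \mid e_t \in \q_t,\ e_j = \Tr_{E_t/E_j}(e_t)\ \forall j\},$$
together with $\dim_\Q(\q) = \dim_\Q(\q_t)$, which is exactly the assertion.

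The only step requiring genuine care — though it is bookkeeping rather than a real obstacle — is verifying that $\kappa$ intertwines the algebra trace $\Tr_{E_t/E_j}\colon (E_t)_\C \to (E_j)_\C$ with the function-level operation $h_t \mapsto \{\sigma_j \mapsto \sum_{\sigma|_{E_j}=\sigma_j} h_t(\sigma)\}$ on $\C^{\Sigma_{E_t}} \to \C^{\Sigma_{E_j}}$. This is the compatibility already spelled out in Sect.~\ref{tracefield} in the Galois/invariants language ($w = \sum_{s \in \G} sz$, with $\G = \Gal(E_t/E_j)$ when $E_t/E_j$ is Galois, and the general case by the same embedding-restriction argument); since we have assumed all the $E_j$ normal over $\Q$, the extension $E_t/E_j$ need not be Galois, but the formula for the trace in terms of embeddings still holds because $\Tr_{E_t/E_j} = \sum_{\sigma_j' } \sigma'$ over the coset representatives, and $\kappa$ is $\C$-linear and multiplicative on the relevant idempotent decomposition $\E_\C = \bigoplus_{\sigma \in \Sigma_\E}\C_\sigma$. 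Granting this one compatibility, the theorem is immediate from Lemma \ref{tracehodge}.
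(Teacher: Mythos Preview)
Your proof is correct and follows essentially the same route as the paper: set $x = \kappa_\E^{-1}(h)$, use the embedding description of the trace from Sect.~\ref{tracefield} to convert the hypothesis on the $h_j$ into $x_j = \Tr_{E_t/E_j}(x_t)$, then invoke Lemma~\ref{tracehodge}. One small correction to your final paragraph: since $E_t/\Q$ is assumed normal and $E_j \subset E_t$, the extension $E_t/E_j$ \emph{is} automatically Galois (the paper notes this explicitly), so your caveat there is unnecessary.
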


\begin{proof}
 Since $E_j$ is normal over $\Q$, field extension $E_t/E_j$ is also
normal and we may view the Galois group $\G_j:=\Gal(E_t/E_j)$ as a normal
subgroup of $\G:=\Gal(E_t/\Q)$.

Recall that group $\G$ acts naturally by $\C$-linear automorphism on
$(E_t)_{\C}$ by
$$s(e\otimes z)=s(e)\otimes z \ \forall s\in\G, \ e\in E_t, z\in \C$$
and on $\C^{\Sigma_{E_t}}$ via
$$(su)(\sigma)=u(\sigma s) \ \forall s\in\G, \sigma \in \Sigma_{E_t}, u \in
\C^{\Sigma_{E_t}}.$$ Clearly, the isomorphism $\kappa_{E_t}$ is
$\G$-equivariant.

It is also clear that if $\sigma_j: E_j \hookrightarrow \C$ is a field
embedding and $\sigma_t: E_t \hookrightarrow \C$ is a field embedding that
extends $\sigma_j$ then the coset $\sigma_t\G_j$ coincides with the set of all
field embeddings $\sigma:E_t \hookrightarrow \C$, whose restriction to $E_j$
coincides with $\sigma_j$. It follows that
$$h_j(\sigma_j)=\sum_{s \in \G_j}h_t(\sigma s).$$
This implies that if we put $x=\kappa_{\E}^{-1}(h)$ then
$$x=\{x_j\}_{j=1}^t \in \prod _{j=1}^t (E_j)_{\C}=\E_{\C}$$
satisfies
$$x_j=\Tr_{E_t/E_j}(x_t) \ \forall j.$$
Now the result follows from Lemma \ref{tracehodge}.
\end{proof}

\section{Complex abelian varieties}
\label{MT}

\begin{sect}
Let $Z$ be a complex abelian variety of positive dimension. We write $\CC_Z$
for the center of the semisimple finite-dimensional $\Q$-algebra $\End^0(Z)$.
Let us choose a polarization on $Z$ and let
$$\End^0(Z) \to \End^0(Z), \ u \mapsto u^{\prime}$$
be  the corresponding Rosati involution. It is well-known that $\CC_Z$ is
stable under the Rosati involution and its restriction
$$\CC_Z \to \CC_Z, \ u \mapsto u^{\prime}$$
does not depend on the choice of polarization. In addition, if $\CC_Z$ is a
CM-field $E$ then the Rosati involution on $E$ coincides with the complex
conjugation $c_0$.
\end{sect}

\begin{sect}
 Let $\H_1(Z,\Q)$ be the
first rational homology group of $Z$: it is a $2\dim(Z)$-dimensional
$\Q$-vector space. The $\Q$-algebra $\End^0(Z)$ acts by functoriality on
$\H_1(Z,\Q)$ and this action gives rise to the embedding of $\Q$-algebras
$$\End^0(Z)\hookrightarrow \End_{\Q}(\H_1(Z,\Q))$$
that sends the identity automorphism $1_Z$ of $Z$ to the identity automorphism
$\I$ of $\H_1(Z,\Q)$. It follows easily \cite[Ch. II]{Shimura} that if $E
\subset \End^0(Z)$ is a subfield that contains $1_Z$ then $E$ is a number field
and the embedding
$$E \subset \End^0(Z) \hookrightarrow \End_{\Q}(\H_1(Z,\Q))$$ provides $\H_1(Z,\Q)$ with
the natural structure of an $E$-vector space of dimension
$$d=d(Z,E):=\frac{2\dim(Z)}{[E:\Q]}.$$
We write $\End_E(\H_1(Z,\Q))\subset \End_{\Q}(\H_1(Z,\Q))$ for the $E$-algebra of
$E$-linear operators in $\H_1(Z,\Q)$ and
$$\Tr_E: \End_E(\H_1(Z,\Q)) \to E$$ for the corresponding trace
map. Clearly, $\Tr_E$ is a $\Q$-Lie algebra homomorphism (even an
$E$-Lie algebra homomorphism). Here $E$ is viewed as a commutative Lie
algebra.

Let us consider the first complex homology group of $Z$
$$\H_1(Z,\C)=\H_1(Z,\Q)\otimes_{\Q}\C,$$
which is a $2\dim(Z)$-dimensional complex vector space. If $E$ is as
above then $\H_1(Z,\C)$ carries the natural structure of a free
$E_{\C}:=E\otimes_{\Q}\C$-module of rank $d(Z,E)$. We write
$\End_{E_{\C}}(\H_1(Z,\C))\subset \End_{\C}(\H_1(Z,\C))$ for
$E_\C$-algebra of endomorphisms of the free $E_{\C}$-module
$\H_1(Z,\C)$ and
$$\Tr_{E_{\C}}: \End_{E_{\C}}(\H_1(Z,\C)) \to E_{\C}$$ for the corresponding trace
map. For example,
$$\Tr_{E_{\C}}(\I_{\C})=d(Z,E)=d.$$
Here  $\I_{\C}$ stands for the identity automorphism of $\H_1(Z,\C)$.

The group $\Aut(\C)$  acts {\sl tautologically} on
$\H_1(Z,\C)=H_1(Z,\;\Q)\otimes_{\Q}\C$ by semilinear automorphisms through the
second factor. The natural homomorphism of $\C$-algebras
$$\End_{\Q}(\H_1(Z,\Q))\otimes_{\Q}\C \to
\End_{\C}(\H_1(Z,\Q)\otimes_{\Q}\C)=\End_{\C}(\H_1(Z,\C))$$ is an isomorphism
that will allow us to identify $\C$-algebras
$\End_{\Q}(\H_1(Z,\Q)\otimes_{\Q}\C$ and $\End_{\C}(\H_1(Z,\C))$. The group
$\Aut(\C)$ acts tautologically on
$\End_{\C}(\H_1(Z,\C))=\End_{\Q}(\H_1(Z,\Q))\otimes\C $ by semilinear
automorphisms.
\end{sect}

\begin{sect}
 There is a
canonical Hodge decomposition (\cite[chapter 1]{Mumford},
\cite[pp.~52--53]{Deligne})
$$\H_1(Z,\C)=\H^{-1,0} \oplus \H^{0,-1}$$
where $\H^{-1,0}=\H^{-1,0}(Z)$ and  $\H^{0,-1}=\H^{0,-1}(Z)$ are mutually ``complex
conjugate" $\dim(Z)$-dimensional complex vector spaces. This splitting is
$\End^0(Z)$-invariant (and the $\End^0(Z)$-module $\H^{-1,0}$ is canonically
isomorphic to the commutative Lie algebra $\Lie(Z)$ of $Z$). Let
$$\f_H=\f_{H,Z}:\H_1(Z,\C) \to
\H_1(Z,\C)$$ be the $\C$-linear operator in $\H_1(Z,\C)$ defined as follows.
$$\f_H(x) =-x \quad \forall \ x \in \H^{-1,0}; \quad \f_H(x)=0 \quad
\forall \ x \in \H^{0,-1}.$$ Clearly, $\f_H$ commutes with $\End^0(Z)$.

Suppose that $\MT=\MT_Z \subset \GL_{\Q}(\H_1(Z,\Q))$ is the Mumford-Tate group
of (the rational Hodge structure $\H_1(Z,\Q)$ and of) $Z$
(\cite{Deligne,Ribet3,ZarhinIzv}). It is a connected reductive algebraic
$\Q$-group that contains scalars and could be described as follows
(\cite[section 6.3]{ZarhinIzv}). Let $\mt\subset \End_{\Q}(\H_1(Z,\Q))$ be the
$\Q$-Lie algebra of $\MT$; it is a reductive algebraic linear $\Q$-Lie algebra
which contains scalars and  its natural faithful representation in $\H_1(Z,\Q)$
is completely reducible. In addition, $\mt$ is the {\sl smallest} $\Q$-Lie
subalgebra in $\End_{\Q}(\H_1(Z,\Q))$ that enjoys the following property: its
complexification
$$\mt_{\C}=\mt\otimes_{\Q}\C \subset \End_{\C}(\H_1(Z,\C))$$
contains scalars and $\f_H$. It is well-known that the centralizer of $\MT$
(and therefore of $\mt$) in $\End_{\Q}(\H_1(Z,\Q))$ coincides with $\End^0(Z)$.
This implies that the center $\cc$ of $\mt$ lies in $\CC_Z$. Since $\mt$ is
reductive, it splits into a direct sum
$$\mt=\mt^{ss} \oplus \cc$$
of $\cc$ and a semisimple $\Q$-Lie algebra $\mt^{ss}$.

Since $\mt^{ss}$ is semisimple, and $E$ is commutative,
$$\Tr_{E}(\mt)=\Tr_{E}(\cc)\subset E.$$
This implies easily that
$$\Tr_{E_{\C}}(\mt_{\C}) =\Tr_{E}(\cc)\otimes_{\Q}\C\subset E\otimes_{\Q}\C.$$
In particular, since $\f_H \in \mt_{\C}$, we have $\Tr_{E_{\C}}(\f_H) \in
\Tr_{E}(\cc)\otimes_{\Q}\C$.

\end{sect}

\begin{sect}
\label{hodge}

 We refer to \cite{Ribet3}, \cite[Sect. 6.6.1 and 6.6.2]{ZarhinIzv}
for the definition and basic properties of the Hodge group $\Hdg=\Hdg_Z$ of the
rational Hodge structure $\H_1(Z,\Q)$ and of $Z$. Recall that $\Hdg$ is a
normal connected algebraic subgroup of $\MT$; in addition, $\Hdg$ lies in the
special general linear group $\SL_{\Q}(\H_1(Z,\Q))$ of $\H_1(Z,\Q)$ and the
natural homomorphism-product
$$\Hdg \times \G_m \to \MT$$
is an isogeny of connected algebraic $\Q$-groups. Here $\G_m
=\G_m\cdot\I\subset \GL_{\Q}(\H_1(Z,\Q))$ is the group of homotheties. It
follows easily that $\Hdg$ is reductive and if $$\hdg=\hdg_Z\subset
\End_{\Q}(\H_1(Z,\Q))$$ is the $\Q$-Lie algebra of $\Hdg$ then it is reductive,
its semisimple part coincides with $\mt^{ss}$ and
$$\mt=\Q\cdot\I\oplus \hdg, \ \hdg=\mt \bigcap \sL(\H_1(Z,\Q)).$$
(Here $\sL(\H_1(Z,\Q))$ is the (simple) $\Q$-Lie algebra of $\Q$-linear
operators in $\H_1(Z,\Q)$ with zero trace.)
 In particular, if $\cc^{0}=\cc^{0}_Z$ is the center of (reductive) $\hdg$ then
$$\cc=\cc^{0}\oplus\Q\cdot\I, \ \hdg=\mt^{ss}\oplus \cc^{0}, \ \mt=\mt^{ss}\oplus
\cc^{0}\oplus\Q\cdot\I.$$ Clearly,
$$\mt_{\C}=\hdg_{\C}\oplus \C\cdot \I_{\C}, \ \f_H =\left(\f_H
+\frac{1}{2}\I_{\C}\right)-\frac{1}{2}\I_{\C}=\f_H^{0}-\frac{1}{2}\I_{\C}$$
where
$$\f_H^{0}:=\f_H
+\frac{1}{2}\I_{\C} \in \sL(\H_1(Z,\C)).$$
It follows easily that $\hdg$ is the {\sl smallest} $\Q$-Lie subalgebra in
$\End_{\Q}(\H_1(Z,\Q))$ that enjoys the following property: its
complexification
$$\hdg_{\C}=\hdg\otimes_{\Q}\C \subset \End_{\C}(\H_1(Z,\C))$$
contains $\f_H^{0}$. Clearly,
\begin{equation}
  \label{eq:trhdg}
  \Tr_{E}(\hdg)=\Tr_{E}(\mt^{ss}\oplus \cc^{0})=\Tr_{E}(\cc^{0}).
\end{equation}

The choice of the polarization on $Z$ gives rise to an alternating
non-degenerate $\Q$-bilinear form
$$\psi_{\Q}: \H_1(Z,\Q) \times \H_1(Z,\Q) \to \Q$$
that is $\Hdg$-invariant; in addition
$$\psi_{\Q}(ux,y)=\psi_{\Q}(x,u^{\prime}y) \ \qquad \forall u\in \End^0(Z), \ x,y \in
\H_1(Z,\Q).$$ The $\Hdg$-invariance of $\psi_{\Q}$ means that
$$\psi_{\Q}(ux,y)+\psi_{\Q}(x,uy)=0 \ \qquad \forall u\in \hdg, \ x,y \in
\H_1(Z,\Q).$$ If $u \in \cc^{0} \subset \hdg$ then $u \in \CC_Z$ and we have
$$\psi_{\Q}(ux,y)=\psi_{\Q}(x,u^{\prime}y), \
\psi_{\Q}(ux,y)+\psi_{\Q}(x,uy)=0.$$ Since $(u^{\prime})^{\prime}=u$, we have
$\psi_{\Q}(u^{\prime}x,y)=\psi_{\Q}(x,uy)$ and therefore
$$0=\psi_{\Q}(ux,y)+\psi_{\Q}(x,uy)=\psi_{\Q}(ux,y)+\psi_{\Q}(u^{\prime}x,y)=\psi_{\Q}((u+u^{\prime})x,y).$$
The non-degeneracy of $\psi_{\Q}$ implies that $u+u^{\prime}=0$, i.e.,
$u^{\prime}=-u$. This means that
$$\cc^{0} \subset \{u \in \CC_Z\mid u^{\prime}=-u\}\subset \CC_Z.$$

\begin{rem}
\label{centerhdg}
It is well known \cite{Mumford} that if the center $\CC_Z$ is a field then it
is either a totally real number field or a CM-field. If $\CC_Z$ is a totally
real number field then the Rosati involution acts on $\CC_Z$ as identity map,
$\{u \in \CC_Z\mid u^{\prime}=-u\}=\{0\}$ and therefore $$\cc^{0}=\{0\}.$$

Suppose that $\CC_Z$ is a CM field, i.e., a totally imaginary quadratic
extension of a totally real number field $F_Z$. Then the Rosati involution acts
on  on $\CC_Z$ as the ``complex conjugation" \cite{Mumford}; in particular, it
is $F_Z$-linear and
 $\{u \in \CC_Z\mid u^{\prime}=-u\}$ is a one-dimensional $F_Z$-vector
 subspace of $\CC_Z$ and therefore its $\Q$-dimension equals $[\CC_Z:\Q]/2$.
 This implies that
$$\cc^{0} \subset \{u \in \CC_Z\mid u^{\prime}=-u\}, \ \dim_{\Q}(\cc^{0}) \le
\frac{1}{2}[\CC_Z:\Q].$$

If ${Z}=\prod_{j=1}^t Z_j$ is a product of abelian varieties $Z_j$'s  then
there is an inclusion $\oplus_{j=1}^t \End^0(Z_j)\subset \End^0(Z)$  and
therefore $\CC_Z \subset \oplus_{j=1}^t \CC_{Z_j}$.
\end{rem}

\end{sect}

\begin{sect}
\label{HdgCenter}
Suppose that a CM field $E$ is the center of $\End^0(Z)$.  As  in Subsect. \ref{CMfields} ,  we write $c_0$ for the ``complex conjugation" on $E$ and $\TT_E$ for the corresponding norm torus.
 Clearly, the center of  the $\C$-algebra
$$\End^0(Z)_{\C}=\End^0(Z)\otimes_{\Q}\C\subset   \End_{\Q}(\H_1(Z,\Q))\otimes_{\Q}\C= \End_{\C}(\H_1(Z,\C))$$
 coincides with $E_{\C}$.

 Let $\ZZ$ be the center of $\Hdg$.

The inclusion $E \subset \End^0(Z)\subset \End_{\Q}(\H_1(Z,\Q))$
gives rise to the embedding of $\Q$-algebraic groups $R_{E/\Q}\G_m
\subset \GL(\H_1(Z,\Q))$. Since $\TT_E \subset R_{E/\Q}\G_m$, we have
$$\TT_E \subset R_{E/\Q}\G_m \subset \GL(\H_1(Z,\Q)), $$
$$ R_{E/\Q}\G_m(\Q)=E^{*} \subset  \Aut_{\Q}(\H_1(Z,\Q)), \
\TT_E(\Q)=\{e\in E\mid e\, c_0(e)=1\}. $$ Clearly, the $\Q$-Lie algebras
of $\TT_E$ and $R_{E/\Q}\G_m$, viewed as $\Q$-Lie subalgebras of
$ \End_{\Q}(\H_1(Z,\Q))$, coincide with $E_{-}$ and $E$ respectively.
Since $\H_1(Z,\C)=\H_1(Z,\Q)\otimes_{\Q}\C,$ we have
$$R_{E/\Q}\G_m(\C)=E_{\C}^* \subset \Aut_{\C}(\H_1(Z,\C)), $$
$$  \TT_E(\C)=\{u \in E_{\C}^{*}\mid u \cdot c_0(u)=1\} \subset E_{\C}^* \subset \Aut_{\C}(\H_1(Z,\C)) .$$

Since the centralizer of $\hdg$ in $\End_{\Q}(\H_1(Z,\Q))$ coincides with
$\End^0(Z)$ , it follows that the centralizer of the $\C$-Lie algebra
$\hdg_{\C}$ in $\End_{\C}(\H_1(Z,\C))$ coincides with $\End^0(Z)_{\C}$.  Since
the $\C$-Lie subalgebra
$$\hdg_{\C}\subset\End_{\C}(\H_1(Z,\C))$$
coincides with the $\C$-Lie algebra of the \textit{connected} complex
algebraic subgroup $\Hdg(\C)\subset \Aut_{\C}(\H_1(Z,\C))$, it follows
that the centralizer of $\Hdg(\C)$ in $\End_{\C}(\H_1(Z,\C))$ also
coincides with $\End^0(Z)_{\C}$.  This implies that the center
$\ZZ(\C)$ of $\Hdg(\C)$ lies in the center of $\End^0(Z)_{\C}$.  It
follows that
$$\ZZ(\C)\subset E_{\C}^*=R_{E/\Q}\G_m(\C).$$
This implies that
$$\ZZ\subset R_{E/\Q}\G_m.$$
 We want to prove that $\ZZ \subset \TT_E$.
In order to do that, let us extend  $\psi_{\Q}$   by $\C$-linearity to $\H_1(Z,\Q)\otimes_{\Q}\C=\H_1(Z,\C)$. We get a non-degenerate alternating $\C$-bilinear form
$$\psi_{\C}: \H_1(Z,\C) \times \H_1(Z,\C) \to \C,$$
which is  $\Hdg(\C)$-invariant.  Clearly,
$$\psi_{\C}(ux,y)=\psi_{\C}(x, c_0(u)y) \ \forall u \in E_{\C}, \ x,y \in \H_1(Z,\C).$$
This implies that
$$\psi_{\C}(ux,uy)=\psi_{\C}(x, c_0(u)u y)= \psi_{\C}(x, u c_0(u) y), \ \forall u \in E_{\C}.$$
This implies that if $u \in E_{\C}$ then $\psi_{\C}$ is $u$-invariant if and only if  $u c_0(u)=1$, i.e.,
$u \in \TT_E(\C)$. It follows that
$\ZZ(\C)\subset \TT_E(\C)$, i.e.,
$$\ZZ \subset \TT_E.$$
\end{sect}

\begin{sect}
The $\dim(Z)$-dimensional complex vector space $\Omega^1(Z)$ of the
differentials of the first kind on $Z$ carries the natural structure of
$E\otimes_{\Q}\C$-module \cite[Sect. 2]{ZarhinCamb}. Clearly,
$$\Omega^1(Z)=\bigoplus_{\sigma\in
\Sigma_E}\C_{\sigma}\Omega^1(Z)=\oplus_{\sigma\in
\Sigma_E}\Omega^1(Z)_{\sigma}$$ where
$\Omega^1(Z)_{\sigma}:=\C_{\sigma}\Omega^1(Z)=\{x \in \Omega^1(Z)\mid
ex=\sigma(e)x \quad \forall e\in E\}$. Let us put
$$n_{\sigma}=n_{\sigma}(Z,E)=\dim_{\C_{\sigma}}\Omega^1(Z)_{\sigma}=\dim_{\C}\Omega^1(Z)_{\sigma}.$$
It follows (compare with  \cite[p. 260]{ZarhinCamb}) that
$\Tr_{E_{\C}}(\f_H)=(-n_{\sigma})_{\sigma \in\Sigma_E}$. This implies  that
$$(n_{\sigma})_{\sigma \in\Sigma_E}=-\Tr_{E_{\C}}(\f_H)\in
\Tr_{E}(\cc)\otimes_{\Q}\C=\Tr_{E}(\mt)\otimes_{\Q}\C.$$
\end{sect}

\begin{rems}
\label{function}

\begin{itemize}

\item[(i)]

It is well-known \cite[Sect. 2]{ZarhinCamb} that
$$n_{\sigma}+n_{\bar{\sigma}}=d=2\dim(Z)/[E:\Q] \ \forall \ \sigma .$$
This means that the function
$$\Sigma_E \to \Q, \ \sigma \mapsto \frac{d}{2}-n_{\sigma}$$
lies in $X_E$.

\item[(ii)] Recall  that the Hodge splitting commutes with $\End^0(Z)$ and
therefore with $E$. Hence $\f_H$ may be viewed as an endomorphism of the free
$E_{\C}$-module $\H_1(Z,\C)$ and its trace in $E_{\C}$ is the tuple
$$(-n_{\sigma})_{\sigma
\in\Sigma_E}\in\prod_{\sigma\in\Sigma_E}\C_{\sigma}=E_{\C}$$  \cite[Sect.
2]{ZarhinCamb}. It follows that
$$\Tr_{E_{\C}}(\f_H^{0})=\Tr_{E_{\C}}(\f_H)+\Tr_{E_{\C}}\left(\frac{1}{2}\I_{\C}\right)=
\Tr_{E_{\C}}(\f_H)+\frac{1}{2}d=\left\{\frac{d}{2}-n_{\sigma}\right\}_{\sigma\in
\Sigma_E}\in E_{\C}.$$

\end{itemize}

\end{rems}

\begin{lem}
\label{traceminimal} $\Tr_{E}(\hdg)$ coincides with the smallest $\Q$-vector
subspace $\q \subset E$ such that the $\C$-vector subspace
$$\q_{\C}=\q\otimes_{\Q}\C \subset E_{\C}=E\otimes_{\Q}\C=\oplus_{\sigma\in \Sigma_E}\C_{\sigma}=\C^{\Sigma_E}$$
contains $\{\frac{d}{2}-n_{\sigma}\}_{\sigma \in\Sigma_E}$.
\end{lem}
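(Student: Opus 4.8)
The plan is to combine the description of $\hdg_{\C}$ as the smallest $\Q$-Lie subalgebra whose complexification contains $\f_H^0$ (from Subsection \ref{hodge}) with the identity $\Tr_{E_{\C}}(\f_H^0)=\{\frac{d}{2}-n_\sigma\}_{\sigma\in\Sigma_E}$ from Remark \ref{function}(ii), via the fact that $\Tr_E$ is a $\Q$-Lie algebra homomorphism. First I would let $\q$ denote the smallest $\Q$-vector subspace of $E$ whose complexification $\q_{\C}\subset E_{\C}$ contains the tuple $\{\frac{d}{2}-n_\sigma\}_{\sigma}$; this $\q$ is well-defined because the set of $\Q$-subspaces $V\subset E$ with $\{\frac{d}{2}-n_\sigma\}\in V_{\C}$ is closed under intersection (an argument identical to the one giving the $\Aut(\C)$-stable hull used in Lemma \ref{CMdim}), and one may equally describe $\q$ as the $\Aut(\C)$-invariant $\Q$-form of the smallest $\C$-subspace of $E_{\C}$ containing that tuple. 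The goal is the two inclusions $\Tr_E(\hdg)\subseteq\q$ and $\q\subseteq\Tr_E(\hdg)$.

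For $\Tr_E(\hdg)\subseteq\q$: recall from Subsection \ref{hodge} that $\Tr_E(\hdg)=\Tr_E(\cc^0)$, and from the displayed computation in Subsection \ref{MT} that $\Tr_{E_{\C}}(\mt_{\C})=\Tr_E(\cc)\otimes_{\Q}\C$; since $\mt=\Q\cdot\I\oplus\hdg$ with $\Tr_E(\I)=d\in\Q$, this gives $\Tr_{E_{\C}}(\hdg_{\C})=\Tr_E(\hdg)\otimes_{\Q}\C$, a complexified $\Q$-subspace of $E_{\C}$. Because $\f_H^0\in\hdg_{\C}$ and $\f_H^0$ is $E_{\C}$-linear (the Hodge splitting commutes with $E$), its trace $\Tr_{E_{\C}}(\f_H^0)=\{\frac{d}{2}-n_\sigma\}_\sigma$ lies in $\Tr_E(\hdg)\otimes_{\Q}\C=\bigl(\Tr_E(\hdg)\bigr)_{\C}$. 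By minimality of $\q$ we conclude $\q\subseteq\Tr_E(\hdg)$.

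For the reverse inclusion $\Tr_E(\hdg)\subseteq\q$, I would argue as follows. Consider the preimage $\mathfrak{h}':=\{u\in\hdg\mid \Tr_E(u)\in\q\}$. Since $\Tr_E$ is $\Q$-linear and $\q$ is a $\Q$-subspace, $\mathfrak{h}'$ is a $\Q$-vector subspace of $\hdg$; moreover it contains $\mt^{ss}$ (because $\Tr_E$ vanishes on the semisimple part, $\Tr_E(\mt^{ss})=0\in\q$), hence $\mathfrak{h}'=\mt^{ss}\oplus(\mathfrak{h}'\cap\cc^0)$, and $\mathfrak{h}'$ is in fact a $\Q$-Lie subalgebra of $\hdg$ (it is the sum of the ideal $\mt^{ss}$ with a subspace of the abelian $\cc^0$, and $[\hdg,\hdg]=\mt^{ss}\subseteq\mathfrak h'$). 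Now complexify: $\mathfrak{h}'_{\C}$ contains $\mt^{ss}_{\C}$, and it contains $\f_H^0$ — indeed $\f_H^0=\f_H^{ss}+\f_H^{c}$ with $\f_H^{ss}\in\mt^{ss}_{\C}$ and $\f_H^c\in\cc^0_{\C}$ the components under $\hdg_{\C}=\mt^{ss}_{\C}\oplus\cc^0_{\C}$, and $\Tr_{E_{\C}}(\f_H^c)=\Tr_{E_{\C}}(\f_H^0)=\{\frac{d}{2}-n_\sigma\}\in\q_{\C}$ (using $\Tr_{E_\C}$ kills $\mt^{ss}_{\C}$), so $\f_H^c\in(\mathfrak h'\cap\cc^0)_{\C}\subseteq\mathfrak h'_{\C}$, whence $\f_H^0\in\mathfrak h'_{\C}$. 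By the minimality characterization of $\hdg$ in Subsection \ref{hodge} — $\hdg$ is the smallest $\Q$-Lie subalgebra of $\End_{\Q}(\H_1(Z,\Q))$ whose complexification contains $\f_H^0$ — we get $\hdg\subseteq\mathfrak{h}'$, i.e. $\Tr_E(\hdg)\subseteq\q$. Combining the two inclusions proves $\Tr_E(\hdg)=\q$.

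The step I expect to require the most care is the reverse inclusion, specifically verifying that $\mathfrak h'$ is genuinely a Lie subalgebra and that $\f_H^0$ lands in its complexification; the subtlety is that $\f_H^0$ itself need not be $\Q$-rational, so one must track the decomposition $\hdg_{\C}=\mt^{ss}_{\C}\oplus\cc^0_{\C}$ at the complexified level and use that $\Tr_{E_{\C}}$ annihilates $\mt^{ss}_{\C}$ (which follows by complexifying $\Tr_E(\mt^{ss})=0$) to identify the $\cc^0_{\C}$-component's trace with the full trace. Everything else is a formal consequence of minimality properties already established in the excerpt.
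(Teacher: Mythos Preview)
Your proof is correct and takes essentially the same approach as the paper: both obtain $\q\subseteq\Tr_E(\hdg)$ directly from $\f_H^0\in\hdg_{\C}$, and both obtain the reverse inclusion by defining $\mathfrak h'=\{u\in\hdg\mid\Tr_E(u)\in\q\}$ and invoking the minimality of $\hdg$ once $\f_H^0\in\mathfrak h'_{\C}$ is checked. Your write-up is more detailed than the paper's (you verify explicitly that $\mathfrak h'$ is a Lie subalgebra and track the $\cc^0_{\C}$-component of $\f_H^0$); note, though, that the header of your second paragraph is mislabeled---it actually proves $\q\subseteq\Tr_E(\hdg)$, not the stated $\Tr_E(\hdg)\subseteq\q$.
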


\begin{proof}
Clearly, $\Tr_{E}(\hdg)$ contains $\q$, because $\hdg_{\C}$ contains
$\f_H^{0}$. On the other hand, if $\Tr_{E}(\hdg)\ne \q$ then
$$\hdg^{\prime}: =\{u\in \hdg\mid \Tr_{E}(u)\in \q\}$$
is a {\sl proper} $\Q$-Lie subalgebra of $\hdg$, whose complexification
contains  $\f_H^{0}$. This contradicts the minimality property of $\hdg$ and
therefore proves the Lemma.
\end{proof}

\begin{rem}
\label{tracecenter} It follows from Remarks \ref{function} that
$$\Tr_{E_{\C}}(\f_H^{0})=\{\frac{d}{2}-n_{\sigma}\}_{\sigma \in\Sigma_E}$$
lies in $E_{-}\otimes_{\Q}\C$. Applying Lemma \ref{traceminimal}, we conclude
that $\Tr_{E}(\hdg)\subset E_{-}.$
\end{rem}

\begin{thm}
\label{normal}
 Suppose that $E$ is a CM field that is normal over $\Q$ and fix
a field embedding $E \hookrightarrow \bar{\Q}\subset\C$. Let $W$ be the
$\Q[\Gal(E/\Q)]$-submodule of $X_E$ generated by the function
$h(\sigma):=\{\frac{d}{2}-n_{\sigma}\}_{\sigma \in\Sigma_E}$. Then
$$\dim_{\Q}(\Tr_E(\cc^0))=\dim_{\Q}(\Tr_{E}(\hdg))=\dim_{\Q}(W),$$
and therefore, $\dim_{\Q}(\cc^0) \geq \dim_{\Q}(W)$.
If $W=X_E$ then \[\Tr_E(\cc^0)=\Tr_{E}(\hdg)= E_{-}.\]
\end{thm}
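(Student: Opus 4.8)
The plan is to combine Lemma~\ref{traceminimal} with Lemma~\ref{CMdim}, exploiting the fact that $E$ is normal over $\Q$. First I recall that by Lemma~\ref{traceminimal}, $\Tr_E(\hdg)$ is the smallest $\Q$-vector subspace $\q\subset E$ such that $\q_{\C}$ contains the tuple $\{\frac{d}{2}-n_{\sigma}\}_{\sigma\in\Sigma_E}$; and by \eqref{eq:trhdg} together with the discussion in Subsect.~\ref{hodge} we have $\Tr_E(\hdg)=\Tr_E(\cc^0)$. So the first equality $\dim_{\Q}(\Tr_E(\cc^0))=\dim_{\Q}(\Tr_E(\hdg))$ is immediate, and it remains to compare $\dim_{\Q}(\Tr_E(\hdg))$ with $\dim_{\Q}(W)$.

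For that comparison I invoke Lemma~\ref{CMdim}: since $E$ is a CM-field normal over $\Q$ with the fixed embedding $E\hookrightarrow\C$, and since by Remarks~\ref{function}(i) the function $h(\sigma)=\frac{d}{2}-n_{\sigma}$ is a $\Q$-valued function lying in $X_E$, the lemma applies directly. It tells us that if $\q$ is the smallest $\Q$-subspace of $E_{-}$ with $\kappa_E(\q_{\C})\ni h$, and $W$ is the $\Q[\Gal(E/\Q)]$-submodule of $X_E$ generated by the translates $\tau(h)$, then $\dim_{\Q}(\q)=\dim_{\Q}(W)$. By Remark~\ref{tracecenter} the relevant tuple actually lies in $E_{-}\otimes_{\Q}\C$, so the space $\q$ produced by Lemma~\ref{traceminimal} coincides with the space $\q$ of Lemma~\ref{CMdim}, i.e.\ $\Tr_E(\hdg)=\q$. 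Hence $\dim_{\Q}(\Tr_E(\hdg))=\dim_{\Q}(W)$, which gives the chain of equalities in the statement.

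The inequality $\dim_{\Q}(\cc^0)\geq\dim_{\Q}(W)$ then follows because $\Tr_E$ restricted to $\cc^0$ is a $\Q$-linear map with image $\Tr_E(\cc^0)$, so $\dim_{\Q}(\cc^0)\geq\dim_{\Q}(\Tr_E(\cc^0))=\dim_{\Q}(W)$. Finally, if $W=X_E$, then $\dim_{\Q}(\Tr_E(\hdg))=\dim_{\Q}(X_E)=\frac{1}{2}[E:\Q]=\dim_{\Q}(E_{-})$; since we already know from Remark~\ref{tracecenter} that $\Tr_E(\hdg)\subseteq E_{-}$, equality of dimensions forces $\Tr_E(\hdg)=E_{-}$, and likewise $\Tr_E(\cc^0)=E_{-}$.

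I do not expect a genuine obstacle here: the theorem is essentially a bookkeeping assembly of the preceding lemmas. The one point requiring a little care is checking that the "smallest $\Q$-subspace $\q\subset E$" of Lemma~\ref{traceminimal} is genuinely the same as the "smallest $\Q$-subspace $\q\subset E_{-}$" of Lemma~\ref{CMdim} — this is exactly what Remark~\ref{tracecenter} supplies, since once we know the generating tuple lies in $E_{-}\otimes_{\Q}\C$, minimality inside $E$ and minimality inside $E_{-}$ produce the same subspace. Everything else is dimension counting.
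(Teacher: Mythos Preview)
Your proposal is correct and follows essentially the same route as the paper: identify $\Tr_E(\hdg)$ with the minimal $\q$ via Lemma~\ref{traceminimal}, then invoke Lemma~\ref{CMdim} to equate $\dim_{\Q}(\q)$ with $\dim_{\Q}(W)$. Your explicit use of Remark~\ref{tracecenter} to reconcile ``smallest $\q\subset E$'' with ``smallest $\q\subset E_{-}$'' makes a step transparent that the paper leaves implicit.
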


\begin{proof} Let $\q\subset E_{-}$ be the smallest $\Q$-vector subspace such
that $\q_{\C}$ contains the function $h$. By Lemma \ref{CMdim},
$$\dim_{\Q}(\q)=\dim_{\Q}(W).$$
The minimality properties of $\hdg$ imply that
$$\q=\Tr_{E}(\hdg)=\Tr_E(\cc^0).$$ This implies
that
$$\dim_{\Q}(\cc^0)=\dim_{\Q}(\Tr_{E}(\hdg))=\dim_{\Q}(\q)=\dim_{\Q}(W).$$

\end{proof}

%%%%%%%%%%%%%%new stuff added 6.22.09%%%%%%%%%%%%%%
\begin{thm}
\label{simplefactor} Suppose that $E$ is a CM field that is normal over $\Q$.

\begin{itemize}
\item[(i)] If the center $\CC_Z$ of $\End^0(Z)$ is a field then $Z$ contains
a simple abelian subvariety of dimension $\ge \dim_{\Q}\Tr_E(\hdg_Z)$.

\item[(ii)] If $\Tr_{E}(\hdg)= E_{-}$ then $Z$ contains a simple abelian
subvariety of dimension $\ge \max(E)$.
\end{itemize}
\end{thm}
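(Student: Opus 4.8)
The plan is to reduce both parts to the analysis of the $E$-vector space $\H_1(Z,\Q)$ together with the $\Q$-Lie algebra $\hdg_Z$ acting on it, and to produce the simple abelian subvariety from a suitable $\Hdg$-stable (equivalently $\hdg_Z$-stable) $E$-subspace of $\H_1(Z,\Q)$ on which the Hodge structure "lives". First I would recall the standard dictionary: $\Hdg$-stable $\Q$-sub-Hodge-structures of $\H_1(Z,\Q)$ correspond (up to isogeny) to abelian subvarieties of $Z$, a simple subvariety corresponding to a simple sub-Hodge-structure; and since $E\subset\End^0(Z)$ commutes with $\Hdg$, it makes sense to look for $E$-stable sub-Hodge-structures. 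So the core task is: find an $E[\Hdg]$-submodule $U\subset\H_1(Z,\Q)$ which is simple as a $\Q$-Hodge structure and compute a lower bound for $\dim_E U\cdot[E:\Q]/2=\dim$ of the corresponding subvariety.

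For part (i), when $\CC_Z$ is a field, I would argue that $\H_1(Z,\Q)$, viewed as a module over $\End^0(Z)$, has all its simple $\End^0(Z)$-summands of the same $\Q$-dimension, and that each such summand $U$ is a sub-Hodge structure (since the Hodge decomposition is $\End^0(Z)$-invariant) — hence corresponds to a subvariety $T$, which one checks is simple because $\End^0(T)$ is again a division algebra of the expected type. The dimension bound comes from $\dim T=\tfrac12\dim_\Q U$ and the fact that $\Tr_E(\hdg_Z)\subset E$ already "sees" at least $\dim_\Q\Tr_E(\hdg_Z)$ independent directions in $E$, forcing $\H_1(Z,\Q)$ as an $E$-module to have rank at least large enough that $\dim_\Q U\ge 2\dim_\Q\Tr_E(\hdg_Z)$; concretely, the trace of $\f_H^0$ on the $E_\C$-module structure has at least $\dim_\Q\Tr_E(\hdg_Z)$-dimensional $\Q$-span, and this span cannot exceed $d\cdot$(something), pinning down $d=d(Z,E)$ from below. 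I expect the clean way is: the simple $\End^0(Z)$-summand $U$ satisfies $\dim_\Q U\ge$ (rank of $U$ over $\CC_Z$)$\cdot[\CC_Z:\Q]$, and one relates this rank to $\dim_\Q\Tr_E(\hdg_Z)$ via Remark \ref{function} and Lemma \ref{traceminimal}.

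For part (ii), assuming $\Tr_E(\hdg_Z)=E_-$, I would use $E_-$ being "as large as possible" to get that $\hdg_{Z,\C}$ contains the full Lie algebra of the norm torus $\TT_E$ — or at least enough of it — and then invoke the structure of $X_E$ as a $\Gal(E/\Q)$-module: by Theorem \ref{normal} the relevant module $W$ equals $X_E$, and a simple $\Gal(E/\Q)$-submodule of $X_E$ of dimension $\max(E)$ (which exists by definition of $\max(E)$, cf.\ Lemma \ref{lem:simple-module} and Examples \ref{maxcycl}) should cut out, via $\kappa_E$, an $E$-stable sub-Hodge-structure of $\H_1(Z,\Q)$ of $\Q$-dimension $2\max(E)$, giving a subvariety of dimension $\max(E)$; its simplicity follows because the induced center of its endomorphism algebra is the subfield of $E$ attached to that simple submodule, which has no smaller $c_0$-stable subfield supporting a sub-Hodge-structure. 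The main obstacle I anticipate is the passage from "$W=X_E$ is simple over $\Gal(E/\Q)$" (a purely Galois-module statement about the period function $h$) to the genuine simplicity of the resulting abelian subvariety $T$ as a complex abelian variety — i.e., ruling out that $T$ further decomposes — which requires carefully identifying $\End^0(T)$ and using that the Mumford–Tate group of $T$ is big enough; the bookkeeping relating $d(Z,E)$, $n_\sigma$, and $\dim T$ through the $\kappa_E$-identification is where I would be most careful.
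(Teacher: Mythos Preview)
Your plan for part (i) is overcomplicated and the crucial inequality is not established. You try to bound $\dim_{\Q} U$ for a simple $\End^0(Z)$-summand $U$ of $\H_1(Z,\Q)$ via ``rank over $\CC_Z$'' and the span of $\Tr_{E_\C}(\f_H^0)$, but there is no clear mechanism linking $\dim_{\Q}\Tr_E(\hdg_Z)$ to the $E$-rank $d(Z,E)$ or to $\dim_{\Q} U$. The paper's argument bypasses $\H_1$ entirely: since $\CC_Z$ is a field, $Z$ is isogenous to $T^m$ for a simple $T$, and $\CC_T=\CC_Z$. One then uses only the chain
\[
\dim(T)\;\ge\;\tfrac{1}{2}[\CC_T:\Q]\;\ge\;\dim_{\Q}\cc^0\;\ge\;\dim_{\Q}\Tr_E(\cc^0)\;=\;\dim_{\Q}\Tr_E(\hdg_Z),
\]
the middle inequality coming from Remark~\ref{centerhdg} (which forces $\CC_T$ to be CM once $\cc^0\ne 0$), and the last equality from~\eqref{eq:trhdg}. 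No analysis of simple $\End^0(Z)$-summands of $\H_1(Z,\Q)$ is needed.

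For part (ii) there is a genuine gap. A simple $\Gal(E/\Q)$-submodule $W'\subset X_E$ lives inside $\Q^{\Sigma_E}\subset E_\C$; there is no construction that ``cuts out, via $\kappa_E$'' from $W'$ an $E$-stable sub-Hodge structure of $\H_1(Z,\Q)$. The map $\kappa_E$ identifies $E_\C$ with $\C^{\Sigma_E}$, not $\H_1(Z,\C)$ with anything built from $X_E$, so your proposed passage simply does not exist. The paper instead uses Poincar\'e reducibility: write $Z$ isogenous to $\prod_j Z_j$ with each $\CC_{Z_j}$ a field and $\Hom(Z_i,Z_j)=0$ for $i\ne j$. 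Then $E$ embeds in every $\End^0(Z_j)$, and the function $h_{Z,E}\in X_E$ decomposes additively as $\sum_j h_{Z_j,E}$. Since by hypothesis the $\Q[\Gal(E/\Q)]$-module generated by $h_{Z,E}$ is all of $X_E$, we get $X_E=\sum_j W_j$ where $W_j$ is the submodule generated by $h_{Z_j,E}$. Now pick a simple submodule $W'\subset X_E$ with $\dim_{\Q} W'=\max(E)$; semisimplicity forces $W'$ to embed in some $W_j$, so $\dim_{\Q} W_j\ge\max(E)$. By Theorem~\ref{normal}, $\dim_{\Q}\Tr_E(\hdg_{Z_j})=\dim_{\Q} W_j$, and then part~(i) applied to $Z_j$ produces the desired simple $T\subset Z_j\subset Z$. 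The missing idea in your plan is precisely this additivity $h_{Z,E}=\sum_j h_{Z_j,E}$ together with the reduction to part~(i) on a factor; the Galois module $X_E$ enters only as a bookkeeping device for comparing the sizes of $\Tr_E(\hdg_{Z_j})$, not as a source of sub-Hodge structures.
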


\begin{proof}

We may assume that $\Tr_{E}(\hdg)\ne \{0\}$.

(i) Suppose that the center $\CC_Z$ of $\End^0(Z)$ is a
field. Since the center of $\End^0(Z)$ is a field, there exists a simple
complex abelian (sub)variety $T$ (of $Z$) such that $Z$ is isogenous to a
self-product $T^m$ of $T$; in particular, $\End^0(Z)$ is the matrix algebra of
size $m$ over $\End^0(T)$, which implies that $\CC_Z =\CC_T$.

If $\c^{0}$ is the center of $\hdg_Z$ then $\c^{0}\subset \CC_Z=\CC_T$ and
$\dim_{\Q}(\c^{0}) \ge \dim_{\Q}\Tr_E(\hdg_Z)$. If $\CC_Z=\CC_T$ is totally real
then the center $\c^0$ of $\hdg_Z$ is zero, which is not the case. So, $\CC_Z$
is a CM-field and
$$\dim_{\Q}(\c^{0}) \le \frac{1}{2}[\CC_Z:\Q]= \frac{1}{2}[\CC_T:\Q].$$
This implies that
$$\frac{1}{2}[\CC_T:\Q] \ge \dim_{\Q}(\c^{0}) \ge \dim_{\Q}\Tr_E(\hdg_Z).$$
Since $[\CC_T:\Q]\le 2\dim(T)$, we conclude that $\dim(T) \ge
\dim_{\Q}\Tr_E(\hdg_Z)$. This proves (i).

\smallskip

(ii) Let us assume that $\Tr_E(\hdg)=E_{-}$. Clearly, the center $\c^0$ of $\hdg_Z$ satisfies
 $$\dim_{\Q}(\c^0) \ge \dim_{\Q}(E_{-})=\frac{1}{2}[E:\Q].$$
The Poincar\'e reducibility theorem implies that there exist abelian
subvarieties $Z_1, \dots , Z_r$ of $Z$ such that the natural morphism
$$\pi:\prod Z_j \to Z, \ \{z_j\}\mapsto \sum_j z_j $$
is an isogeny, $\Hom(Z_i,Z_j)=\{0\}$ for all $i \ne j$ and each $\End^0(Z_j)$
is a simple (but not necessarily central) $\Q$-algebra and its center is a
field.. In particular,
$$\End^0(Z)=\oplus_j \End^0(Z_j).$$

The morphism $\pi$ implies the $\End^0(Z)$-equivariant isomorphism of rational
$\Q$-structures
$$\H^1(Z,\Q)=\oplus_j \H^1(Z_j,\Q).$$
In particular, this splitting is $E$-invariant and we have the embeddings $E
\hookrightarrow \End(Z_j)$, whose ``direct sum" is the $E \hookrightarrow
\End^0(Z)$.  We have
$$d(Z,E)=\sum_j d(Z_j,E); n_{\sigma}(Z,E)=\sum_j n_{\sigma}(Z_j,E) \ \forall
\sigma\in \Sigma_E.$$ It follows that the function
$$h_{Z,E}: \Sigma_E\to \Q, \ \sigma \mapsto \frac{1}{2}d(Z,E)-
n_{\sigma}(Z,E)$$ coincides with the sum $\sum_j h_{Z_j,E}$ where
$$h_{Z_j,E}: \Sigma_E\to \Q, \ \sigma \mapsto \frac{1}{2}d(Z_j,E)-
n_{\sigma}(Z_j,E)$$ is the corresponding function attached to $Z_j$. Clearly,
$h$ and all $h_j$ belong to $X_E$.

Let $W_j$ be the $\Q[\Gal(E/\Q)]$-submodule of $X_E$ generated by $h_j$. Since
$h=\sum_j h_j$, the $\Q[\Gal(E/\Q)]$-submodule $\sum_j W_j$ contains $h$.
%%%%%%%%%%%new stuff added Dec. 09
Let $W$ be the $\Q[\Gal(E/\Q)]$-submodule of $X_E$ generated by $h$. By Theorem
\ref{normal}, $\dim_{\Q}(W)=\dim_{\Q}(E_{-})$. Since
$\dim_{\Q}(E_{-})=\dim_{\Q}(X_E)$, we conclude that
$\dim_{\Q}(W)=\dim_{\Q}(X_E)$  and therefore $X_E=W$. In other words,
%By Theorem \ref{normal},
$X_E$ coincides with its $\Q[\Gal(E/\Q)]$-submodule
generated by $h$. It follows that $X_E=\sum_j W_j$. This implies that if
$W^{\prime}$ is a simple $\Q[\Gal(E/\Q)]$-submodule of $X_E$ then it is
isomorphic to a certain $\Q[\Gal(E/\Q)]$-submodule of $W_j$ for some $j$; in
particular, $1 \le \dim_{\Q}(W^{\prime}) \le \dim_{\Q}(W_j)$. On the other
hand, by Theorem \ref{normal}, $\dim_{\Q}(W_j)= \dim_{\Q}\Tr_E(\hdg_{Z_j})$. By
the already proven case (i), $Z_j$ contains a simple abelian subvariety $T_j$
with $\dim(T_j) \ge \dim_{\Q}\Tr_E(\hdg_{Z_j})$. It follows that $$ \dim(T_j)
\ge\dim_{\Q}\Tr_E(\hdg_{Z_j})\ge \dim_{\Q}(W^{\prime}).$$ Clearly, $T_j$ is an
abelian subvariety of $Z$, since $Z_j$ is an abelian subvariety of $Z$. Now, if
we choose $W^{\prime}$ with $\dim_{\Q}(W^{\prime})=\max(E)$ then we get
$\dim(T_j) \ge \max(E)$.
\end{proof}

%%%%%%%%%%%%%%%%%%%%%%%%%%%%%%%

\begin{sect}
\label{product} Let $t$ be a positive integer and suppose that for each
positive $j\le t$ we are given the following data.

\begin{itemize}
\item A number field $E_i$ that is normal over $\Q$; we fix an embedding
$E_j\hookrightarrow\C$ and consider $E_j$ as the subfield of $\C$.

\item A complex abelian variety $Z_i$ of positive dimension.

\item An embedding $E_j \hookrightarrow \End^0(Z_j)$ that sends $1$ to the
identity automorphism of $Z_j$.
\end{itemize}

Let us consider the corresponding numbers
$$d_j:=d(Z_j,E_j)=\frac{2\dim(Z_j)}{[E_j:\Q]}$$
and functions
$$\Sigma_{E_j} \to \Z_{+}, \ \sigma \mapsto n^{\{j\}}_{\sigma}:=n_{\sigma}(Z_j,
E_j)=\dim_{\C}\Omega^1(Z_j)_{\sigma}.$$

Let us consider the products ${Z}=\prod_{j=1}^t Z_j$ and $\E =\prod_{j=1}^t
E_j=\oplus_{j=1}^t E_j$. Clearly, ${Z}$ is a complex abelian variety and $\E$
is is a finite-dimensional semisimple commutative $\Q$-algebra that admits a
natural embedding
$$\E \hookrightarrow \End^0({Z})$$
that sends $1\in \E$ to $1_{\mathcal{Z}}$. The natural (K\"unneth) isomorphism
$$\H_1({Z},\Q)=\oplus_{j=1}^t \H_1(Z_j,\Q)$$
is an isomorphism of rational Hodge structures; in particular, each
$\H_1(Z_j,\Q)$ is a $\MT_Z$-invariant $\Q$-vector space of $\H_1({Z},\Q)$.
Clearly, $\H_1({Z},\Q)$ carries the natural structure of $\E$-module and
$$\E\subset \End^0(Z)\subset \End_{\Q}(\H_1({Z},\Q)).$$
It is also clear that
$$\End_{\E}(\H_1({Z},\Q))=\oplus_{j=1}^t \End_{E_j}(\H_1({Z_j},\Q))$$ and
$$\hdg_Z \subset \oplus_{j=1}^t \End_{E_j}(\H_1({Z_j},\Q)).$$
In particular, the elements of $\End_{\E}(\H_1({Z},\Q))$ are the $t$-tuples
$\{u_j\}_{j=1}^t$ with $u_j \in \End_{E_j}(\H_1({Z_j},\Q))$. Clearly, the map
$$\{u_j\}_{j=1}^t \mapsto u_j \mapsto \Tr_{E_j}(u_j) \in E_j$$
is the homomorphism of $\Q$-Lie algebras
$$\End_{\E}(\H_1({Z},\Q)) \to E_j,$$
which we continue to denote by $\Tr_{E_j}$. Since $E_j$ is the commutative Lie
algebra, $\Tr_{E_j}$ kills the semisimple part of $\hdg_Z$. On the other hand,
the restriction of $\Tr_{E_j}$ to $\E$ coincides with the composition of the
projection map
$$\E=\oplus_{j=1}^t E_j\to E_j$$ and multiplication by $d_j$.
 Let $d$ be the least common multiple of all $d_j$'s. Then the $\Q$-linear map
 $$\Tr_{\E}:\oplus_{j=1}^t \End_{E_j}(\H_1({Z_j},\Q)) \to \oplus_{j=1}^t E_j, \
\{u_j\}_{j=1}^t \mapsto \left\{\frac{d}{d_j}\Tr_{E_j}(u_j))\right\}_{j=1}^t$$
kills the semisimple part of $\hdg_Z$ and acts on $\E$ as multiplication by $d$.
It follows that
 $$\Tr_{\E}(\hdg_Z)=\Tr_{\E}(\cc^{0}_Z)\subset \E;$$
 in addition, if $\CC_Z \subset \E$ then $\cc^{0}_Z\subset \E$ and
$\Tr_{\E}(\cc^{0}_Z)=\cc^{0}_Z$, which implies that
$$\Tr_{\E}(\hdg_Z)=\cc^{0}_Z\subset \E.$$
As above, $\Tr_{\E}(\hdg_Z)$ coincides with the smallest $\Q$-vector subspace
$\q\subset \E$ such that $\q_{\C}$ contains $\Tr_{\E}(\f_{H,Z}^0)$  where
$$\f_{H,Z}^0=\f_{H,Z}+\frac{1}{2}\I_Z \in \End_{\Q}(\H_1(Z,\Q))\otimes_{\Q} \C.$$
On the other hand, one may easily check that $\Tr_{\E}(\f_{H,Z}^0)$ corresponds
(via $\kappa_{\E}$) to the function $h:\Sigma_E \to \Q\subset \C$ that
coincides with $$h_j:\Sigma_{E_j}\to \Q, \ \sigma\mapsto
\frac{d}{2}-\frac{d}{d_j}n_{\sigma}(Z_j)$$ on $\Sigma_{E_j}$.
\end{sect}

\begin{thm}
\label{TRACECENTER} We keep the notation and assumptions of the previous
subsection. Suppose that $E_t$ contains all $E_j$'s and for all $j$ and  each
$\sigma_j \in \Sigma_j$ we have $h_j(\sigma_j)=\sum_{\sigma} h_t(\sigma)$ where
the sum is taken across all $\sigma E_t\hookrightarrow \C$, whose restriction
to $E_j$ coincides with $\sigma_j$. Then
$$\Tr_{\E}(\hdg_Z)=\{ (e_j)_{j=1}^t \in \E\mid e_t\in \Tr_{E_t}(\hdg_{Z_t}),\  e_j=\Tr_{E_t/E_j}(e_t) \ \forall
j\}.$$ In particular, $\dim_{\Q}(\q)=\dim_{\Q}(\q_t)$.
\end{thm}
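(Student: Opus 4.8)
The plan is to deduce Theorem~\ref{TRACECENTER} from Theorem~\ref{TRACEHG} by matching up the hypotheses and the objects involved. First I would recall from the previous subsection that $\Tr_{\E}(\hdg_Z)$ equals the smallest $\Q$-vector subspace $\q\subset\E$ whose complexification $\q_{\C}$ contains $\Tr_{\E}(\f_{H,Z}^{0})$, and that under the identification $\kappa_{\E}$ this element corresponds to the $\Q$-valued function $h=\{h_j\}_{j=1}^t$ on $\Sigma_{\E}$ with $h_j(\sigma)=\tfrac{d}{2}-\tfrac{d}{d_j}n_{\sigma}(Z_j)$ on $\Sigma_{E_j}$. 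In particular $\q$ is exactly the ``smallest subspace'' attached to $h$ in the sense of Lemma~\ref{periodDIM} and Theorem~\ref{TRACEHG}. Likewise, applying the same description to the single abelian variety $Z_t$ with its field $E_t$ (as in Subsect.~\ref{hodge} and Lemma~\ref{traceminimal}, or rather its product analogue), $\Tr_{E_t}(\hdg_{Z_t})$ is the smallest $\Q$-vector subspace $\q_t\subset E_t$ with $(\q_t)_{\C}$ containing $h_t$ (here I would note that rescaling $n_\sigma$ by the constant $d/d_t$ does not change the span, so $h_t$ and the genuine ``$\tfrac{d_t}{2}-n_\sigma(Z_t)$'' function generate the same subspace).

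Next I would verify that the hypothesis of Theorem~\ref{TRACEHG} is satisfied. That theorem requires: each $E_j$ normal over $\Q$ (assumed in Subsect.~\ref{product}); $E_t\supseteq E_j$ for all $j$ (this is the standing assumption of Theorem~\ref{TRACECENTER}); and the compatibility $h_j(\sigma_j)=\sum_{\sigma}h_t(\sigma)$, the sum over all $\sigma:E_t\hookrightarrow\C$ restricting to $\sigma_j$ on $E_j$ --- which is precisely the hypothesis of Theorem~\ref{TRACECENTER}. So all hypotheses transfer verbatim, and Theorem~\ref{TRACEHG} applies to the function $h$ and yields
\[\q=\{(e_j)_{j=1}^t\in\E\mid e_t\in\q_t,\ e_j=\Tr_{E_t/E_j}(e_t)\ \forall j\},\qquad \dim_{\Q}(\q)=\dim_{\Q}(\q_t).\]

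Finally I would translate this back: since $\q=\Tr_{\E}(\hdg_Z)$ and $\q_t=\Tr_{E_t}(\hdg_{Z_t})$, substituting into the displayed formula gives exactly the claimed identity
\[\Tr_{\E}(\hdg_Z)=\{(e_j)_{j=1}^t\in\E\mid e_t\in\Tr_{E_t}(\hdg_{Z_t}),\ e_j=\Tr_{E_t/E_j}(e_t)\ \forall j\},\]
together with $\dim_{\Q}(\q)=\dim_{\Q}(\q_t)$. The only real points needing care --- and the ``main obstacle,'' though it is more bookkeeping than difficulty --- are (a) checking that the function $h$ arising from $Z=\prod Z_j$ via the K\"unneth decomposition is genuinely the disjoint-union function $\{h_j\}$ with $h_j$ as written, so that the setup of Subsect.~\ref{fieldproducts} applies, and (b) confirming that the constant rescaling factors $d/d_j$ built into the definition of $\Tr_{\E}$ are harmless, i.e.\ the ``smallest subspace containing $h_t$'' is insensitive to multiplying $h_t$ by a nonzero rational scalar, so that $\q_t$ really coincides with $\Tr_{E_t}(\hdg_{Z_t})$ as defined in the single-variety setting. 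Both are routine once spelled out, and then the theorem is immediate from Theorem~\ref{TRACEHG} and Lemma~\ref{tracehodge}.
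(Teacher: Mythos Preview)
Your proposal is correct and matches the paper's own proof, which is simply the one-liner ``notice that $\Tr_{\E}=(d/d_t)\Tr_{E_t}$ on $\hdg_{Z_t}$ and apply Theorem~\ref{TRACEHG}.'' Your point (b) about the harmless rescaling by $d/d_t$ is exactly the observation the paper singles out, and the rest of your argument just unpacks what ``apply Theorem~\ref{TRACEHG}'' means.
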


\begin{proof}
One has only to notice that $\Tr_{\E}= (d/d_t)\Tr_{E_t}$ on $\hdg_{Z_t}\subset
\End_{E_t}(\H_1(Z_t,\Q))$ and apply Theorem \ref{TRACEHG}.

\end{proof}

\section{Proof of Main Results}
\label{sectmain}
We keep all notation and assumptions of Section \ref{MT}.

\begin{sect}
Suppose that  $n\ge 2$ is an integer, $p$ is a prime that does not divide $n$.
Let $r$ be a positive integer and $q=p^r$. Suppose that $E=\Q(\zeta_q)$ and
 $$d(Z,E)=n-1.$$
 It is well known that $\Gal(E/\Q)=(\Z/q\Z)^{*}$ where $a+q\Z \in (\Z/q\Z)^{*}$ corresponds to
 the field
 automorphism
 $$s_a: \Q(\zeta_q) \to \Q(\zeta_q), \ \zeta_q \to \zeta_q^a.$$
 It is also well-known that the complex conjugation $c_0$ coincides with
 $s_{-1}$.

 Clearly,  $\Sigma_E$ coincides with the
 set of
 embeddings
 $$\sigma_a: \Q(\zeta_q) \to \Q(\zeta_q)\subset \C$$
 that send $\zeta_q$ to $\zeta_q^{-a}$ with $a+q\Z \in (\Z/q\Z)^{*}$.
 It is also clear that
 $$\bar{\sigma}_a=\iota\sigma_a=\sigma_{-a}=\sigma_{q-a}$$ and
 $$s_b(\sigma_a)=\sigma_{ab} \ \forall \ a,b.$$

\end{sect}

 \begin{thm}
 \label{inttrace}
Suppose that $n_{\sigma_a}=[na/q]$ for all $a$ with $1 \le a<q, \ (a,p)=1$.
Then $\Tr_{E}(\cc^0)=\Tr_{E}(\hdg)= E_{-}$.
 \end{thm}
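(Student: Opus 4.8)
The plan is to apply Theorem \ref{normal} to $E=\Q(\zeta_q)$, which is CM and normal over $\Q$. By that theorem it suffices to show that the $\Q[\Gal(E/\Q)]$-submodule $W$ of $X_E$ generated by the function $h(\sigma_a)=\tfrac{d}{2}-n_{\sigma_a}$ is all of $X_E$; here $d=d(Z,E)=n-1$. Using the hypothesis $n_{\sigma_a}=[na/q]$ for $1\le a<q$, $(a,p)=1$, together with the identification $\Sigma_E=\Gal(E/\Q)=(\Z/q\Z)^{*}$ from the preceding subsection, I would first write $h$ explicitly as the $\Q$-valued function on $(\Z/q\Z)^{*}$ sending $a\mapsto \tfrac{n-1}{2}-[na/q]$, and record that $h\in X_E$, i.e.\ $h(-a)=-h(a)$ (this follows from the general fact, noted in Remarks \ref{function}, that $n_{\sigma_a}+n_{\sigma_{-a}}=d$, or directly from $[na/q]+[n(q-a)/q]=n-1$ since $q\nmid na$).

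The heart of the argument is therefore a representation-theoretic / arithmetic statement: the translates $\{s_b(h)\}_{b\in(\Z/q\Z)^{*}}$, where $s_b(h)(a)=h(ab^{-1})$ (equivalently $h(ab)$ after reindexing), span the whole space $X_E$ of $\Q$-valued odd functions on $(\Z/q\Z)^{*}$, which has dimension $\varphi(q)/2$. The clean way to see this is to pass to characters of the abelian group $G=(\Z/q\Z)^{*}$: decompose $X_E\otimes\C$ into the $\chi$-isotypic lines for the odd characters $\chi$ (those with $\chi(-1)=-1$), and note that $W_\C=X_E\otimes\C$ if and only if the ``Fourier coefficient'' $\hat h(\chi)=\sum_{a\in G}\chi(a)^{-1}h(a)$ is nonzero for every odd character $\chi$ of $G$. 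So the problem reduces to showing $\hat h(\chi)\ne 0$ for all odd $\chi$ on $(\Z/q\Z)^{*}$.

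I would then compute $\hat h(\chi)$. Since $\sum_a \chi(a)^{-1}$ is $0$ (the constant $\tfrac{n-1}{2}$ contributes nothing for nontrivial $\chi$, and an odd character is nontrivial), one is left with $\hat h(\chi)=-\sum_{a}\chi(a)^{-1}[na/q]$. A standard manipulation — writing $[na/q]=\tfrac{na}{q}-\langle na/q\rangle$ where $\langle\cdot\rangle$ denotes the fractional part, and summing the sawtooth-type sum against $\chi$ — expresses this in terms of generalized Bernoulli numbers / Gauss sums attached to $\chi$; concretely one gets something proportional to $L(0,\chi^{-1})$ times a factor like $(1-\chi(n)^{-1})$ or a related nonvanishing elementary factor, up to a nonzero constant. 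Because $\chi$ is odd, $L(0,\chi^{-1})\ne 0$, and since $p\nmid n$ and $\chi$ has $p$-power conductor one checks the elementary factor is also nonzero. This is exactly the ``arithmetic properties of certain (non-vanishing) Fourier coefficients'' that the introduction promises will be verified in Sections \ref{arithm} and \ref{lastsection}.

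The main obstacle is precisely this last non-vanishing of Fourier coefficients over the group $(\Z/q\Z)^{*}$: one must control $\hat h(\chi)$ for every odd Dirichlet character $\chi$ of $p$-power conductor, and the behaviour depends delicately on $n \bmod q$ and on whether $\chi$ is primitive modulo $q$ or factors through a smaller $p$-power modulus. I expect to handle it by relating $\hat h(\chi)$ to $L$-values (or to Gauss sums / Bernoulli numbers $B_{1,\chi}$) and invoking $L(0,\chi)\ne 0$ for odd $\chi$, then separately ruling out cancellation from the ``Euler-type'' factor using the hypothesis $(n,p)=1$. Once $\hat h(\chi)\ne 0$ for all odd $\chi$ is established, $W=X_E$ follows, and Theorem \ref{normal} immediately gives $\Tr_E(\cc^0)=\Tr_E(\hdg)=E_-$, completing the proof.
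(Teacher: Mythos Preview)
Your approach is essentially the paper's: reduce via Theorem~\ref{normal} to showing the $(\Z/q\Z)^{*}$-translates of $h$ span $X_E$, then test this by nonvanishing of $\hat h(\chi)$ for every odd Dirichlet character $\chi$ of $p$-power modulus, and finally compute $\hat h(\chi)$ using the sawtooth/fractional-part identity and relate it to $S_q(\bar\chi)=\sum a\,\bar\chi(a)$ and hence to $L$-values (the paper phrases it via $L(1,\chi)$ rather than $L(0,\chi^{-1})$/$B_{1,\chi}$, but these are the same nonvanishing statement). The imprimitive case is handled, as you anticipate, by reducing to the conductor.

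One point to correct: the ``elementary factor'' you guess, $(1-\chi(n)^{-1})$, can genuinely vanish (whenever $\chi(n)=1$), so the coprimality $p\nmid n$ alone does not save it. The actual computation gives
\[
\hat h(\chi)=-\sum_{a}\bar\chi(a)\Big[\tfrac{na}{q}\Big]
=\frac{\chi(n)-n}{q}\,S_q(\bar\chi),
\]
obtained by writing $[na/q]=na/q-\{na/q\}$ and using the bijection $a\mapsto na$ on $(\Z/q\Z)^{*}$ (this is where $p\nmid n$ is used). The factor $\chi(n)-n$ is nonzero because $n\ge 2$ while $|\chi(n)|\le 1$; the nonvanishing of $S_q(\bar\chi)$ is then the $L$-value input. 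With this adjustment your sketch goes through and matches the paper.
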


\begin{proof}

The following statement will be proven in Section \ref{arithm} (See Theorem
\ref{thm:gen1}). \\
%\begin{lem}
%\label{integral}
{\itshape Let us consider the $\Q$-vector space   of functions
 \[V_{\Q}:=\{g:\left(\Z/q\Z\right)^{\times}\ra \Q \mid g(q-a)=-g(a),
 \forall \ a+q\Z\}\] provided with the natural structure of a
 $(\Z/q\Z)^{*}$-module. Let $n$ be a positive integer that is not
 divisible by $p$.  Let us consider the function $h$ on $(\Z/q\Z)^{*}$
 defined by $h(a+q\Z)=\frac{n-1}{2}-[\frac{na}{q}]$ for $1\leq a \leq
 q-1$ and $p\nmid a$. Then $h \in V_{\Q}$ and the $(\Z/q\Z)^{*}$-submodule generated
 by $h$ coincides with $V_{\Q}$.}
%\end{lem}

Clearly, $V_{\Q}=X_E$. Now Theorem \ref{inttrace} becomes an immediate corollary of
Theorem \ref{normal} combined with the above result.
\end{proof}

Now we are ready to prove our main theorems listed in Section \ref{intro}.

 {\sl Proof of Theorem} \ref{main0}. By \cite[p. 355 and Remark 4.13 on p.
356]{ZarhinM}, \cite[Remark 5.14 on p. 383]{ZarhinPisa}  there exists an
embedding $\Q(\zeta_q)\hookrightarrow \End^0(J^{(f,q)})$ such that
$d(J^{(f,q)},\Q(\zeta_q))=n-1$ and $n_{\sigma_a}=[na/q]$ for all $a$ with
$1\leq a \leq q-1$. Now the result follows from  Theorem \ref{inttrace}.
%Theorem \ref{normal}.

{\sl Proof of Theorem} \ref{mainD}. If $p$ is odd then $\Gal(\Q(\zeta_q)/Q)$ is
a cyclic group of order $(p-1)p^{r-1}$.  If $p=2$ and $q\ge 4$ then
 $\Gal(\Q(\zeta_q)/Q)=\langle c\rangle \times H$ where $c$ is the complex conjugation and $H$
 is a cyclic group of order $2^{r-1}$. Now the result follows from Theorem
 \ref{main0} combined with Theorem \ref{simplefactor} and Examples \ref{maxcycl}.

{\sl Proof of Theorem} \ref{main}. We know that
$\End^0(J^{(f,q)})=\Q(\zeta_q)=E$. By the last assertion of Subsect.
\ref{hodge},  $\cc^0 \subset E_{-} \subset E$ and therefore
$\Tr_E(\cc^0)=\cc^0$. By Theorem \ref{main0}, $\Tr_E(\cc^0)=E_{-}$. This
implies that
$$\cc^0=\Tr_E(\cc^0)=E_{-}=\Q(\zeta_q)_{-}.$$
Since $E_{-}$ coincides with the $\Q$-Lie algebra of {\sl connected}  $\TT_{\Q(\zeta_q)}$, we conclude that the center $\ZZ$ of $\Hdg(J^{(f,q)})$ contains $T_E$.
But we proved in Subsect. \ref{HdgCenter} that $\ZZ\subset \TT_E$. It follows that $\ZZ=\TT_E=\TT_{\Q(\zeta_q)}=\U_q$.

{\sl Proof of Theorem} \ref{mainfull}. We know that
$d_j=d(J^{(f,p^j)},\Q(\zeta_{p^j}))=n-1$ for all $j\leq r$. The least
common multiple $d$ of all $d_j$ is also $n-1$. It follows that the
function $h_j: \Sigma_{E_j}\to \Q$ is defined by
\[ h_j(a+p^j\Z) =
\frac{n-1}{2}-\left[\frac{na}{p^j}\right], \quad \forall 1\leq a\leq p^j-1,
p\nmid a. \] The following relations between the functions $h_j$ will be proved
in Section \ref{lastsection} (See Corollary \ref{dependR}).

{\itshape Let us identify (in the
usual way) $G=(\Z/p^r\Z)^{*}$ with the Galois group $\Gal(\Q(\zeta_{p^r}/\Q)$
and let us consider its subgroup
$$G_j=\Gal(\Q(\zeta_{p^r})/\Q(\zeta_{p^j}))\subset \Gal(\Q(\zeta_{p^r}/\Q)=G.$$
Then for each $a \in (\Z/p^r\Z)^{*}$,
$$h_j(a \bmod p^j)=\sum_{b\in G_j}h_r(ab).$$ }
 The Theorem now follows from Theorem \ref{TRACECENTER}.

%\end{sect}

%%%%%%%%%%%%%%%%%%%%%%%%%%%%%%%%%%%%%%%%%%

\section{Fourier coefficients}
\label{arithm}

\begin{sect}

  Throughout this section, $p$ is a prime, $q=p^r$ is a power of $p$,
  and $n$ is a positive integer that is {\sl not} divisible by $p$.

  As usual,
\[\ST^1:=\{z\in\C\mid z\bar{z}=1\} \subset \C^{*}\subset \C.\]
Given a finite  group $G$, its group of characters $\widehat{G}$ is the group $\Hom(G,\ST^1)$. (If $G$ is commutative then
$\widehat{G}$ is called the dual of $G$.) Let $\K$ be a field that is either
$\Q$ or $\C$.  Recall that the regular representation $R_{\K}$ of $G$ over $\K$
is the space of $\K$-valued function on $G$, where an element $a\in G$ acts on
a function $f$ by $(af)(b)=f(ba), \forall b \in G$. Clearly
$R_{\C}=R_{\Q}\otimes_{\Q}\C$.

Suppose $G=\gp$. We write $V_{\K}$ for the subrepresentation of $R_{\K}$
consisting all ``odd'' functions on $\gp$. Namely,
\[V_{\K}:=\{f:\left(\Z/q\Z\right)^{\times}\ra \K \mid f(q-a)=-f(a),
\forall \ a+q\Z\}.\] By definition, $V_{\K}=\{0\}$ if $q=p=2$, and
$\dim_{\K}V_{\K}=\varphi(q)/2$ otherwise.

Given a real number $x$, we write $[x]$ for the largest integer less
  or equal to $x$. Now consider the function $h_r$ defined by
$h_r(a)=\frac{n-1}{2}-[\frac{na}{q}]$, where $1\leq a \leq q-1$ and $ p\nmid
a$. Since $p\nmid n$, we have
\[ \left[\frac{na}{q}\right]+\left[\frac{n(q-a)}{q}\right]= n-1 \qquad
\forall\: 0<a<q \text{ with }p\nmid a.\] Hence $h_r\in V_{\Q}\subset
V_{\C}$.  We also note that $h_r=0$ if and only if either $q=2$ or $n=1$.

\end{sect}

The following assertion was used in the proof of Theorem \ref{inttrace}.

\begin{thm}\label{thm:gen1}
  Let $p$ be a prime, $q=p^r$, $n\geq 2$ and $p\nmid n$. Let $\K$ be
  either $\Q$ or $\C$. The function $h_r$ generates the $\K[\gp]$-module
  $V_{\K}$.
\end{thm}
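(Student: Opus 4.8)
The assertion is about the $\K[G]$-module $V_{\K}$ with $G=\gp$, so the natural tool is character theory / the decomposition of the regular representation. Since $V_{\C}=V_{\Q}\otimes_{\Q}\C$ and generation is detected after base change, it suffices to prove the statement over $\K=\C$, which I will do. Over $\C$, the odd part $V_{\C}$ of the regular representation decomposes as $\bigoplus_{\chi}\C\cdot \chi$, the direct sum taken over all odd characters $\chi\in\widehat{G}$ (those with $\chi(-1)=-1$); each such $\chi$ spans a one-dimensional subrepresentation of $R_{\C}$ lying inside $V_{\C}$. A cyclic submodule $\C[G]\cdot h_r$ equals all of $V_{\C}$ if and only if the ``Fourier coefficient'' of $h_r$ at every odd character is nonzero, i.e.
\[
\widehat{h_r}(\chi):=\sum_{a\in G}\chi(a)\,h_r(a)\neq 0\qquad\text{for every odd }\chi\in\widehat{G}.
\]
So the whole problem reduces to a nonvanishing statement for these $\varphi(q)/2$ explicit character sums.

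\textbf{Reduction and the key computation.} First I would dispose of the degenerate case: if $q=2$ then $V_{\K}=0$ and there is nothing to prove, and if $n=1$ then $h_r=0$ but also $n\ge 2$ is assumed, so I may take $q>2$ and $n\ge2$. Now fix an odd character $\chi$ of conductor $p^s$ (so $s\le r$), and compute $\widehat{h_r}(\chi)$. Because $h_r(a)=\tfrac{n-1}2-[\tfrac{na}{q}]$ and $\sum_{a}\chi(a)=0$ (as $\chi$ is nontrivial), the constant $\tfrac{n-1}{2}$ contributes nothing, so
\[
\widehat{h_r}(\chi)=-\sum_{\substack{1\le a\le q-1\\ p\nmid a}}\chi(a)\left[\frac{na}{q}\right].
\]
I would then use the standard device for sums involving $[na/q]$: write $[na/q]=\frac{na}{q}-\{na/q\}$ where $\{x\}$ is the fractional part, note $\sum\chi(a)\cdot\frac{na}{q}$ is (up to the nonzero factor $n/q$) a sum of the form $\sum\chi(a)a$ which can be evaluated, and relate $\sum_a\chi(a)\{na/q\}$ to a generalized Bernoulli number / the value $L(0,\bar\chi)$ or equivalently a Gauss-sum expression. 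Concretely, reindexing $a\mapsto n^{-1}a$ (legitimate since $p\nmid n$) turns $\sum\chi(a)\{na/q\}$ into $\chi(n)^{-1}\sum\chi(a)\{a/q\}$, and the latter sum is the classical expression
\[
\sum_{\substack{1\le a\le q-1\\ p\nmid a}}\chi(a)\,\frac{a}{q}\ \sim\ -\frac{1}{2}\,B_{1,\chi}\ =\ -\frac12 L(0,\bar\chi)\cdot(\text{nonzero factor}),
\]
which is nonzero precisely because $\chi$ is odd (for an odd Dirichlet character the generalized Bernoulli number $B_{1,\chi}$, equivalently $L(0,\chi)$, does not vanish). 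Putting the pieces together, $\widehat{h_r}(\chi)$ is a nonzero algebraic multiple of $L(0,\bar\chi)$ (times $\chi(n)^{-1}$, a root of unity), hence nonzero.

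\textbf{Main obstacle.} The conceptual core — ``cyclic module $=$ whole module $\iff$ all Fourier coefficients nonzero'' — is routine semisimple representation theory. The real work is the explicit evaluation and, above all, making sure the nonvanishing is clean for \emph{every} odd $\chi$, including characters of conductor strictly smaller than $q$ and including the subtlety that the sum runs over $p\nmid a$ rather than over all residues mod $q$ (so one is really dealing with a sum twisted by the ``$(1-\chi(p)p^{-s})$''-type Euler factor when $\chi$ has conductor $<q$). I expect the main obstacle is this bookkeeping: one must check that the Euler-factor correction that appears when lifting a character of conductor $p^s$ up to modulus $q$ does not accidentally vanish — but $\chi(p)=0$ whenever $\chi$ is nontrivial on the relevant part, or more carefully the correction factor is a nonzero rational, so no cancellation occurs. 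An alternative, perhaps cleaner, route that avoids $L$-functions entirely: show directly that for each odd $\chi$ the sum $\sum_{p\nmid a}\chi(a)[na/q]$ equals, after the substitution $a\mapsto n^{-1}a$, a fixed nonzero quantity independent of $n$ up to the unit $\chi(n)$ — namely $-\chi(n)^{-1}\sum_{p\nmid a}\chi(a)[a/q]$ — and then evaluate this single ``$n=1$-type'' sum combinatorially by pairing $a$ with $q-a$ and using $\chi(q-a)=-\chi(a)$ together with $[a/q]=0$ for $1\le a<q$; this forces a careful but elementary count. I would pursue whichever of these is shorter, but the essential content is the nonvanishing of the odd-character Fourier coefficient of $h_r$.
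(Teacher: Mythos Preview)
Your plan is correct and matches the paper's proof: reduce to $\C$, decompose $V_\C=\bigoplus_{\chi(-1)=-1}\C\chi$, and reduce generation to the nonvanishing of every odd-character Fourier coefficient of $h_r$. The paper obtains the same closed formula you are heading toward (it uses the Fourier expansion of the sawtooth where you use fractional parts, but both yield $\widehat{h_r}(\chi)=\tfrac{1}{q}(\bar\chi(n)-n)\sum_a a\chi(a)$), and then deduces $\sum_a a\chi(a)\ne 0$ from $L(1,\bar\chi)\ne 0$, handling the imprimitive case by the elementary identity $S_q(\chi)=(q/d)\,S_d(\chi^*)$ --- precisely the bookkeeping you flag as the main obstacle.

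One caution: your proposed ``alternative route'' does not work as written. The map $a\mapsto [na/q]$ is \emph{not} periodic in $a$ modulo $q$, so the substitution $a\mapsto n^{-1}a$ cannot be applied directly to $\sum_{p\nmid a}\chi(a)[na/q]$; and the target expression $\sum_{p\nmid a}\chi(a)[a/q]$ vanishes identically since $[a/q]=0$ for $1\le a\le q-1$. Stick with your first (fractional-part) computation, which is sound and is exactly what the paper does.
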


\begin{proof}

  If $q=2$, the vector space $V_{\K}=\{0\}$; if $q=4$, then $\dim_{\K}
  V_{\K}=\phi(4)/2=1$ and $h \neq 0$, hence the theorem is trivial in
  these cases.  Thus we further assume that either $p$ is odd or $p=2$
  and $q=2^r\geq 8$.

Let $W_{\K}$ be the submodule of $V_{\K}$ generated by $h_r$. Clearly
$W_{\C}=W_{\Q}\otimes \C$. Then $h_r$ generates $\Q[\gp]$-module
$V_{\Q}$ if and only if the same holds true if we replace $\Q$ with
$\C$. From now on we work exclusively over the
  field of complex numbers.

  Let $G$ be a finite commutative group. The regular representation
  $R_{\C}$ of $G$ decomposes into a direct sum of 1-dimensional
  irreducible subrepresentations generated by the characters (see
  \cite[Corollary 2.18]{Rep}):
  \[ R_{\C}=\oplus_{\chi\in \widehat{G}} \C\cdot \chi.\] Recall that
  $V_{\C}$ is a subrepresentation of the regular representation for
  $G=\gp$. Hence \[V_{\C}=\oplus \C\cdot \chi,\] where we sum over all
  characters $\chi \in \widehat{G}\cap V_{\C}$.

  A character $\chi$ lies in $V_{\C}$ if and only if
  $\chi(-1)=-1$. These characters are mutually orthogonal under the
  inner product
\[\langle g_1, g_2\rangle:=\frac{1}{\varphi(q)}\sum_{\substack{1\leq a\leq
    q-1 \\ (a,p)=1}}g_1(a)\overline{g_2(a)}, \qquad \forall g_1,
g_2\in V_{\C}. \] Clearly, $\langle\chi, \chi \rangle=1$. It follows that the
set
\[B:=\widehat{G}\cap V_{\C} =\{\; \chi \in \widehat{G} \mid
\chi(-1)=-1\;\}\] forms an orthonormal basis of $V_{\C}$. In particular, there
are exactly $\varphi(q)/2$ characters that are in $B$ (which could also be seen
from the fact that $\sum_{\chi\in \widehat{G}} \chi(-1)=0$ ). We label the
characters in $B$ as $\chi_j$ for $1 \leq j \leq \varphi(q)/2$.

Every function $f\in V_{\C}$ may be uniquely written as a linear combination of
characters $\sum c_{j}\chi_j$, where
\[c_{j}=\langle f,
\chi_j \rangle =\frac{1}{\varphi(q)}\sum_{\substack{1\leq a\leq q-1 \\
    (a,p)=1}}f(a)\overline{\chi_j(a)}.\]
If $c_i=0$ for some $i$, then the $\gp$-submodule generated by $f$ is contained
in the proper submodule $\oplus_{j\neq i}\C\cdot \chi_j$. Thus if $f$ generates
the $\gp$-module $V_{\C}$, it is necessary that $c_j\neq 0$ for all $j$. We show that this is also a sufficient condition. It
suffices to show that each $\chi_i$ lies in the $\gp$-submodule generated by
$f$. For each $j\neq i$, we choose an element $a_{ij}\in \gp$ such that
$\chi_i(a_{ij}) \neq \chi_j(a_{ij})$. Let $T_i$ be the element in the group
$\C$-algebra $\C[\gp]$ defined by
\[T_i=\frac{1}{c_i}\prod_{\substack{j=1\\j\neq i}}^{\varphi(q)/2} \frac{a_{ij}-\chi_j(a_{ij})}{\chi_i(a_{ij})-\chi_j(a_{ij})}.\]
Clearly $T_i\chi_j=0$ for all $j\neq i$, and $T_i\chi_i=\chi_i/c_i$. Then $T_i
f =\chi_i$. Since $i$ is arbitrary, we conclude that all $\chi_i\in B$ lie in
the $\gp$-submodule generated by the function $f$. Therefore, the theorem
follows from the following lemma.
\end{proof}

\begin{lem}
\label{lastlemma}
  Let $p$ be a prime, $q=p^r$, $n\geq 2$ and $p\nmid n$.  Let $\chi:
  \gp\ra \ST^1\subset \C$ be a character of $\gp$ such that
  $\chi(-1)=-1$. Then the sum $\sum h_r(a)\overline{\chi(a)} \neq 0$,
  where we sum over all integers $a$ such that $1\leq a \leq q-1$ and $p\nmid
  a$.
\end{lem}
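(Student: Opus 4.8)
The plan is to compute the character sum $S_\chi := \sum_{1\le a\le q-1,\; p\nmid a} h_r(a)\overline{\chi(a)}$ explicitly and show it never vanishes. First I would rewrite $h_r(a) = \frac{n-1}{2} - [na/q]$; since $\chi$ is a nontrivial character (it satisfies $\chi(-1)=-1$), the constant term $\frac{n-1}{2}$ contributes nothing, so $S_\chi = -\sum_{p\nmid a} [na/q]\,\overline{\chi(a)}$. The key elementary identity is that for $p\nmid a$ one has $na/q - [na/q] = \{na/q\}$, the fractional part, so modulo a multiple of $\sum \overline\chi(a) = 0$ we may replace $[na/q]$ by $-\{na/q\}$ (after scaling by $q$): concretely, $S_\chi = \frac{1}{q}\sum_{p\nmid a} (na \bmod q)\,\overline{\chi(a)}$ up to sign, where $na\bmod q$ denotes the representative in $\{1,\dots,q-1\}$. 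Since $n$ is invertible mod $q$, substituting $a \mapsto n^{-1}a$ turns this into $\frac{\chi(n)}{q}\sum_{p\nmid b} b\,\overline{\chi(b)}$, so up to the nonzero factor $\chi(n)$ it suffices to show $\sum_{1\le b\le q-1,\,p\nmid b} b\,\overline{\chi(b)} \ne 0$.

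The second step is to recognize this last sum in terms of a (generalized) Bernoulli number or, equivalently, the value $L(0,\overline\chi)$ attached to the odd Dirichlet character $\overline\chi$ mod $q$. Extending $\overline\chi$ by zero to a function on $\Z/q\Z$, the sum $\sum_{b=1}^{q-1} b\,\overline\chi(b)$ equals $-q\,B_{1,\overline\chi}$ where $B_{1,\overline\chi} = \frac1q\sum_{b=1}^{q}\overline\chi(b)\,b$ is the first generalized Bernoulli number. The classical fact (see e.g. Washington, \emph{Introduction to Cyclotomic Fields}) is that for an odd primitive character $\psi$ one has $L(0,\psi) = -B_{1,\psi} \ne 0$; indeed $B_{1,\psi}$ is, up to an explicit nonzero algebraic factor, the value $L(1,\psi)$ times a Gauss sum, and $L(1,\psi)\ne 0$. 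However, here $\overline\chi$ need not be primitive: it is a character mod $q = p^r$ that may be induced from a character $\psi$ mod $p^s$ with $s\le r$. So I would pass to the primitive character $\psi$ underlying $\overline\chi$ and track the Euler-factor correction: $\sum_{b=1,\,p\nmid b}^{q-1} b\,\overline\chi(b) = -q\,B_{1,\overline\chi}$, and $B_{1,\overline\chi}$ relates to $B_{1,\psi}$ by $B_{1,\overline\chi} = B_{1,\psi}\prod_{\ell \mid q}(1-\psi(\ell))$; since the only prime dividing $q$ is $p$ and $\psi$ already has conductor a power of $p$ (so $\psi(p)=0$), the product is just $1$. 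Hence $\sum_{p\nmid b} b\,\overline\chi(b) = -q\,B_{1,\psi} \ne 0$ because $\psi$ is odd and primitive.

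The main obstacle is the bookkeeping around primitivity and the Euler factor: one must be careful that restricting the sum to $p\nmid b$ (rather than summing over all $b$ mod $q$) is exactly the operation that removes the predicted Euler factor at $p$, and that for a character of $p$-power conductor this operation is vacuous. An alternative, more self-contained route that avoids invoking $L$-function non-vanishing would be to argue $p$-adically: the sum $\sum_{p\nmid b} b\,\overline\chi(b)$ lies in $\Z[\zeta_{\varphi(q)}]$, and one shows it is a $p$-adic unit (or at least nonzero) by reducing modulo a prime above $p$ and using that $\overline\chi$ is odd, which forces the reduced sum to be nonzero. Either way the crux is the same classical input — an odd Dirichlet character has nonzero first Bernoulli number — and the rest is the reduction carried out in the first paragraph, which is routine once the fractional-part manipulation is set up correctly.
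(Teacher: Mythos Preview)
Your strategy is essentially the paper's: reduce $S_\chi$ to a nonzero multiple of $S_q(\overline\chi):=\sum_{p\nmid b} b\,\overline\chi(b)$, then show the latter is nonzero by invoking the nonvanishing of the first generalized Bernoulli number (equivalently, $L(1,\chi)\ne 0$) for the underlying odd primitive character. The paper reaches the key identity $S_\chi=\frac{\chi(n)-n}{q}\,S_q(\overline\chi)$ via the Fourier expansion of the sawtooth function followed by specializing to $n=1$; your direct manipulation with fractional parts is simpler once corrected, and the passage from the imprimitive to the primitive case (your Euler-factor remark versus the paper's explicit identity $S_q(\overline\chi)=(q/d)\,S_d(\overline{\psi})$ for the inducing primitive $\psi$ mod $d$) is the same content on both sides.

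There is, however, a slip in your reduction. You assert that ``modulo a multiple of $\sum\overline\chi(a)=0$'' one may replace $[na/q]$ by $-\{na/q\}$, obtaining $S_\chi=\pm\frac{\chi(n)}{q}S_q(\overline\chi)$. But $[na/q]=na/q-\{na/q\}$, and the term $na/q$ is not constant in $a$: it contributes $-\frac{n}{q}S_q(\overline\chi)$, precisely the quantity you want to be nonzero. Keeping it, the substitution $b=na\bmod q$ in the fractional-part piece gives the correct formula $S_\chi=\frac{\chi(n)-n}{q}S_q(\overline\chi)$. (Your version already fails the sanity check $n=1$: then $h_r\equiv 0$ so $S_\chi=0$, whereas your formula would give $\frac{1}{q}S_q(\overline\chi)\ne 0$.) Fortunately $|\chi(n)|\le 1<n$ for $n\ge 2$, so $\chi(n)-n\ne 0$ as well and the conclusion is unaffected once the formula is repaired.
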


We will prove Lemma \ref{lastlemma} in Section \ref{lastsection}.

\section{Explicit formulas}
\label{lastsection}

\begin{proof}[Proof of Lemma \ref{lastlemma}]
  Using Fourier expansion, one sees that for $x\not\in \Z$,
  \[ x-[x]-\frac{1}{2}=\sum_{\substack{m\in \Z\\ m \neq 0}}
  -\frac{e(mx)}{2\pi i m}\; ,\]
where $e(x):=e^{2\pi ix}$. If we let $s(h,\chi)$ to be the sum $\sum
h(a)\overline{\chi(a)}$, and regard $\chi$ as Dirichlet character
%%%%%%%%%%%%%%%%%%
(i.e., we put $\chi(a)=0$ if $p\mid a$)
%%%%%%%%%%%%%%%%
, we then get
\[
\begin{split}
  s(h_r,\chi) &=\sum_{a=0}^{q-1}
  \left(\frac{n-1}{2}-\left[\frac{na}{q}\right]\right)\overline{\chi(a)} \\
   &=\sum_{a=0}^{q-1}\left(\frac{n-1}{2}-\frac{na}{q}+\frac{1}{2}-\sum_{\substack{m\in
         \Z\\ m \neq 0}}\frac{e(mna/q)}{2\pi i
       m}\right)\overline {\chi(a)}\\
   &=\frac{n}{2}\sum_{a=0}^{q-1}\overline {\chi(a)}
   -\frac{n}{q}\sum_{a=0}^{q-1} a\;\overline
   {\chi(a)}-\sum_{\substack{m\in \Z \\m \neq 0}}\frac{1}{2\pi i m } \sum_{a=0}^{q-1}e(mna/q)\overline{\chi(a)}\\
   &=-\frac{n}{q}\sum_{a=0}^{q-1} a\;\overline
   {\chi(a)}-\sum_{\substack{m \in \Z\\ m \neq 0}}\frac{1}{2\pi i m } \sum_{a=0}^{q-1}e(mna/q)\overline{\chi(a)}
\end{split}
\]
where we used the fact that $\sum_{a=0}^{q-1}\overline{\chi(a)}=0$. Since $n$
and $q$ are coprime, $na\bmod q$ runs through the list of all residue classes
modulo $q$ when $a$ does so. Then
\begin{equation}  \label{eq:head}
\begin{split}
s(h_r,\chi)&=-\frac{n}{q}\sum_{a=0}^{q-1} a\;\overline
   {\chi(a)}-\sum_{\substack{m \in \Z\\ m \neq 0}}\frac{\chi(n)}{2\pi
   i m } \sum_{a=0}^{q-1}e(mna/q)\overline{\chi(na)}\\
&=-\frac{n}{q}\sum_{a=0}^{q-1} a\;\overline
   {\chi(a)}-\sum_{\substack{m \in \Z\\ m \neq 0}}\frac{\chi(n)}{2\pi i m } \sum_{a=0}^{q-1}e(ma/q)\overline{\chi(a)}
\end{split}
\end{equation}
Let $\tau_q(\chi)$ denote the Gauss sum $\sum e(a/q)\chi(a)$, also set
$S_q(\chi)=\sum a\,\chi(a)$ and $c_{\chi}(m)=\sum e(ma/q)\chi(a)$,   where all
sums are taken from $a=0$ to $q-1$. We then have

\begin{equation}
  \label{eq:neck}
  s(h_r,\chi)=-\frac{n}{q}S_q(\bar{\chi})-\sum_{\substack{m\in \Z \\ m
      \neq 0 }}\frac{\chi(n)c_{\bar{\chi}}(m)}{2\pi
  i m}
\end{equation}
Clearly, (\ref{eq:neck}) works for all $n$ such that $p\nmid n$. In particular,
if we set $n=1$, then $h_r=0$, hence $s(h_r,\chi)=0$. That is
\[-\frac{1}{q}S_q(\bar{\chi})-\sum_{\substack{m\in \Z \\ m
      \neq 0 }}\frac{c_{\bar{\chi}}(m)}{2\pi
  i m}=0
 \]
Combining with (\ref{eq:neck}) we get
\begin{equation}
  \label{eq:shoulder}
s(h_r,\chi)=-\frac{n}{q}S_q(\bar{\chi})+\frac{\chi(n)}{q}S_q(\bar{\chi})=\frac{1}{q}(\chi(n)-n)S_q(\bar{\chi})
\end{equation}
When $n\geq 2$,  $\chi(n)-n\neq 0$ since $\abs{\chi(n)}=1$. Thus the lemma
follows if we show that $S_q(\bar{\chi})\neq 0$ for any character $\chi$ with
$\chi(-1)=-1$. We prove this by cases.

\bigskip

\noindent Case 1. Assume that $\chi$ is a primitive Dirichlet character modulo
$q=p^r$.  By \cite[Theorem 9.7]{Vau},
\[c_{\bar{\chi}}(m)=\chi(m)\tau_q(\bar{\chi}).\]
It follows that
\begin{equation}
  \label{eq:main}
  \begin{split}
S_q(\bar{\chi})&=-q\sum_{\substack{m \in \Z \\
    m \neq 0}}\frac{c_{\bar{\chi}}(m)}{2\pi i m }
  =-\frac{q\tau_q(\bar{\chi})}{2\pi
    i } \sum_{\substack{ m \in \Z\\ m \neq 0}} \frac{ \chi(m)}{m} \\
  &=-\frac{q\tau_q(\bar{\chi})}{\pi
    i } \sum_{m=1}^{\infty} \frac{ \chi(m)}{m}
  =-\frac{q\tau_q(\bar{\chi})}{\pi i }L(1,\chi)
  \end{split}
\end{equation}
where we used the fact that $\chi(-1)=-1$.

It is well known \cite[Theorem 9.7]{Vau} that the absolute value of the Gauss
sum $\abs{\tau_q(\chi)}=\sqrt{q}$ for all primitive characters $\chi$. In
particular $\tau_q(\bar{\chi}) \neq 0$. We get
\begin{equation}
  \label{eq:l1}
    L(1,\chi)=-\frac{\pi i S_q(\bar{\chi})}{q \tau_q(\bar{\chi})}=\frac{\pi
    i }{q^2}\,\tau_q(\chi)S_q(\bar{\chi})
\end{equation}
since
$\tau_q(\bar{\chi})=\chi(-1)\overline{\tau_q(\chi)}=-\overline{\tau_q(\chi)}$.

It is a classical result that $L(1,\chi)\neq 0$ for all nontrivial Dirichlet
characters modulo $q$ (see \cite[ Theorem 2, Chapter 16]{Ire}). Hence
$S_q(\bar{\chi})\neq 0$.

The above closed form (\ref{eq:l1}) of $L(1,\chi)$ for $\chi(-1)=-1$ is
actually also classical. See \cite[ Theorem 9.9]{Vau}.

\bigskip

\noindent Case 2. Assume that $\chi$ is induced by a primitive character $\xs$
modulo $d$ where $d=p^{r_d}$ with $0<r_d<r$. Since both $d$ and $q$ are powers
of $p$, $\chi(a)=\xs(a)$ for all $0\leq a\leq q-1$.  If we write $a=xd+y$ with
$0\leq x\leq q/d-1$ and $0\leq y \leq d-1$, we then have
\begin{equation}
  \label{eq:sx}
\begin{split}
S_q(\bar{\chi})=&\sum_{a=0}^{q-1}
a\,\overline{\chi(a)}=\sum_{x=0}^{q/d-1}\sum_{y=0}^{d-1}
(xd+y)\,\bxs(xd+y)\\
&=\sum_{x=0}^{q/d-1}\sum_{y=0}^{d-1}xd\,\bxs(y)
+\sum_{x=0}^{q/d-1}\sum_{y=0}^{d-1}
y\,\bxs(y)\\
&=\frac{q}{d}\sum_{y=0}^{d-1}y\bxs(y)=\frac{q}{d}\,S_d(\bxs)
\end{split}
\end{equation}
By Case 1, $S_d(\bxs)\neq 0$. Thus $S_q(\bar{\chi})\neq 0$.
\end{proof}

\begin{cor}
\label{rdFourier} Let $p$ be a prime, $q$ power of $p$, $n$ an integer coprime
to $p$. Suppose that $\chi$ is a Dirichlet character mod $q$ that is induced by
a character $\xs$ mod $d$ for some $d\mid q$. Then
$\sum_{a=0}^{q-1}\left[na/q\right]\bar{\chi}(a)=\sum_{b=0}^{d-1}\left[nb/d\right]\bxs(b).$
In particular, if  $c_{\chi}^{(r)}$ is the coefficient of $h_r$ with respect to
$\chi$ and $c_{\xs}^{(d)}$ is the coefficient of $h_d$ with respect to $\xs$
then
$$\varphi(q)c_{\chi}^{r}=\varphi(d)c_{\xs}^{(d)}.$$
\end{cor}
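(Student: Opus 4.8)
The plan is to reduce the coefficient identity $\varphi(q)c^{(r)}_{\chi}=\varphi(d)c^{(d)}_{\xs}$ to the floor-function identity $\sum_{a=0}^{q-1}[na/q]\bar{\chi}(a)=\sum_{b=0}^{d-1}[nb/d]\bxs(b)$, and then to prove the latter by a short direct computation. Throughout I assume $\chi$ (equivalently $\xs$) is nontrivial, which is the only case used in the applications (where $\chi(-1)=-1$). The first thing to record is the bookkeeping: since $q$ and $d$ are powers of $p$, for every integer $a$ one has $\bar{\chi}(a)=\bxs(a\bmod d)$ and $\bar{\chi}(a)=0$ exactly when $p\mid a$, so passing between the range $\{1\le a\le q-1,\ p\nmid a\}$ and $\{0\le a\le q-1\}$ is harmless. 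Using $\sum_{a}\bar{\chi}(a)=0$ and the definition $h_r(a)=\tfrac{n-1}{2}-[na/q]$ I obtain
\[\varphi(q)c^{(r)}_{\chi}=\sum_{a=0}^{q-1}h_r(a)\bar{\chi}(a)=\frac{n-1}{2}\sum_{a=0}^{q-1}\bar{\chi}(a)-\sum_{a=0}^{q-1}\left[\frac{na}{q}\right]\bar{\chi}(a)=-\sum_{a=0}^{q-1}\left[\frac{na}{q}\right]\bar{\chi}(a),\]
and the same computation for $(d,\xs)$ gives $\varphi(d)c^{(d)}_{\xs}=-\sum_{b=0}^{d-1}[nb/d]\bxs(b)$. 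Hence the ``in particular'' statement is exactly equivalent to the floor-function identity.

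To prove that identity, put $Q=q/d$ and write each $a$ with $0\le a\le q-1$ uniquely as $a=xd+b$ with $0\le x<Q$, $0\le b<d$. Then $na/q=nx/Q+nb/q$ and $\bar{\chi}(a)=\bxs(b)$, so the left-hand side becomes $\sum_{b=0}^{d-1}\bxs(b)\sum_{x=0}^{Q-1}[nx/Q+nb/q]$. The crux is that the inner sum equals $C+[nb/d]$ with $C=\sum_{x=0}^{Q-1}\lfloor nx/Q\rfloor$ independent of $b$: because $Q$ is a power of $p$ and $p\nmid n$, the map $x\mapsto nx\bmod Q$ permutes $\{0,\dots,Q-1\}$, so writing $nx=Q\lfloor nx/Q\rfloor+(nx\bmod Q)$ and reindexing turns the inner sum into $\sum_{x}\lfloor nx/Q\rfloor+\sum_{j=0}^{Q-1}[\,j/Q+nb/q\,]$, and Hermite's identity $\sum_{j=0}^{Q-1}[\theta+j/Q]=[Q\theta]$ evaluates the second piece as $[Q\cdot nb/q]=[nb/d]$. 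Summing against $\bxs(b)$ and invoking $\sum_{b=0}^{d-1}\bxs(b)=0$ kills the constant term, leaving $\sum_{a}[na/q]\bar{\chi}(a)=\sum_{b}[nb/d]\bxs(b)$, which together with the previous paragraph finishes the argument.

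It is worth noting a shortcut that bypasses the floor manipulation altogether: formula \eqref{eq:shoulder} established in the proof of Lemma~\ref{lastlemma} already gives $s(h_r,\chi)=\tfrac{1}{q}(\chi(n)-n)S_q(\bar{\chi})$ for every nontrivial $\chi$ mod $q$, while the computation in \eqref{eq:sx} shows $S_q(\bar{\chi})=\tfrac{q}{d}S_d(\bxs)$ when $\chi$ is induced by $\xs$ mod $d$; combined with $\chi(n)=\xs(n)$ (valid since $p\nmid n$) this yields $s(h_r,\chi)=\tfrac{1}{d}(\xs(n)-n)S_d(\bxs)=s(h_d,\xs)$, i.e.\ $\varphi(q)c^{(r)}_{\chi}=\varphi(d)c^{(d)}_{\xs}$ at once, and the floor identity then drops out from $s(h_r,\chi)=-\sum_a[na/q]\bar{\chi}(a)$.

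I do not anticipate a genuine obstacle. The only substantive point is the reindexing $x\mapsto nx\bmod Q$ — legitimate precisely because $q$, and hence $Q$, is a power of $p$ coprime to $n$ — together with Hermite's identity and the standard vanishing of character sums. The remaining care is purely bookkeeping: matching the summation ranges $\{0\le a\le q-1\}$ and $\{p\nmid a\}$, and setting aside the degenerate case $d=1$, which plays no role in the application.
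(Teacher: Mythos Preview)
Your argument is correct. The direct computation you give---writing $a=xd+b$, permuting via $x\mapsto nx\bmod Q$, and applying Hermite's identity $\sum_{j=0}^{Q-1}[\theta+j/Q]=[Q\theta]$ to collapse the inner sum to $[nb/d]$ plus a $b$-independent constant---is a genuinely different and more elementary route than the paper's. The paper instead takes exactly what you call the ``shortcut'': it invokes the closed formula \eqref{eq:shoulder} for $s(h_r,\chi)$ together with the relation $S_q(\bar{\chi})=(q/d)S_d(\bxs)$ from \eqref{eq:sx}, first reducing to the case where $\xs$ is primitive and then handling the general case by passing to the primitive inducing character. Your direct approach has the virtue of being self-contained and avoiding the Fourier-analytic machinery of Lemma~\ref{lastlemma}; the paper's approach has the virtue of reusing computations already in hand. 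Both rely on the same harmless bookkeeping (that $\bar{\chi}(a)=\bxs(a\bmod d)$ since $q,d$ are powers of $p$, and that $\sum\bar{\chi}=0$ for nontrivial $\chi$), and both tacitly exclude the trivial character, which as you note is irrelevant to the application.
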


\begin{proof}
  First assume that $\xs$ is primitive mod $d$. By previous
  calculations, if we substitute (\ref{eq:sx}) into
  (\ref{eq:shoulder}), then
\[s(h_r,\chi)=\frac{1}{q}(\chi(n)-n)\frac{q}{d}\,S_d(\bxs)=\frac{1}{d}(\chi(n)-n)S_d(\bxs).\]
Applying (\ref{eq:shoulder}) again,
\[\frac{1}{d}(\chi(n)-n)S_d(\bxs)= \sum_{b=0 }^{d-1}\left(\frac{n-1}{2}-\left[\frac{nb}{d}\right]\right)\bxs(b)\]
It follows that
\[\sum_{a=0}^{q-1}\left(\frac{n-1}{2}-\left[\frac{na}{q}\right]\right)\bar{\chi}(a)=s(h_r,\chi)=\sum_{b=0}^{d-1}\left(\frac{n-1}{2}-\left[\frac{nb}{d}\right]\right)\bxs(b).\]
Equivalently,
\[\sum_{a=0}^{q-1}\left[\frac{na}{q}\right]\bar{\chi}(a)=\sum_{b=0}^{d-1}\left[\frac{nb}{d}\right]\bxs(b).\]
If $\xs$ is not primitive, then we reduce further
  to obtain a primitive
  character $\xs_0$ mod $d'$  for some $d'\mid d$ such that $\xs_0$
  induces both $\xs$ and $\chi$. Both sides of the
  desired equality now equal to $\sum_{c=0}^{d'-1}\left[nc/d'\right]\bxs_0(c)$.
\end{proof}

The following assertion was used in the proof of Theorem \ref{mainfull}.
\begin{cor}
\label{dependR} Let $j \le r$ be a positive integer. Let us identify (in the
usual way) $G=(\Z/p^r\Z)^{*}$ with the Galois group $\Gal(\Q(\zeta_{p^r}/\Q)$
and let us consider its subgroup
$$G_j=\Gal(\Q(\zeta_{p^r})/\Q(\zeta_{p^j}))\subset \Gal(\Q(\zeta_{p^r}/\Q)=G.$$
Then for each $a \in (\Z/p^r\Z)^{*}$,
$$h_j(a \bmod p^j)=\sum_{b\in G_j}h_r(ab).$$
\end{cor}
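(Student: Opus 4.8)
The plan is to deduce Corollary~\ref{dependR} directly from Corollary~\ref{rdFourier} by decomposing the function $h_r$ into its character components and matching up the two sides. First I would fix the identification of $G=(\Z/p^r\Z)^{*}$ with $\Gal(\Q(\zeta_{p^r})/\Q)$ and recall that under it the subgroup $G_j$ corresponds precisely to the kernel of the reduction map $(\Z/p^r\Z)^{*}\twoheadrightarrow(\Z/p^j\Z)^{*}$; thus $a\bmod p^j$ depends only on the coset $aG_j$, and the map $a\mapsto\sum_{b\in G_j}h_r(ab)$ is the averaging (over $G_j$) that lands in the space of functions on $(\Z/p^r\Z)^{*}$ pulled back from $(\Z/p^j\Z)^{*}$. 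Both sides of the claimed identity are therefore naturally functions on $(\Z/p^j\Z)^{*}$, and it suffices to check equality of their Fourier coefficients against every character $\psi\in\widehat{(\Z/p^j\Z)^{*}}$.

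Next I would carry out the Fourier comparison. Write $h_r=\sum_{\chi}c_\chi^{(r)}\chi$, the sum over Dirichlet characters $\chi$ mod $p^r$ with $\chi(-1)=-1$. Applying the group-algebra element $e_{G_j}:=\frac{1}{|G_j|}\sum_{b\in G_j}b$ kills every $\chi$ nontrivial on $G_j$ and fixes those trivial on $G_j$, i.e. those induced from characters mod $p^j$. Hence $\frac{1}{|G_j|}\sum_{b\in G_j}h_r(ab)$ equals $\sum_{\xs}c_{\tilde\xs}^{(r)}\,\xs(a)$, where the sum runs over odd characters $\xs$ mod $p^j$ and $\tilde\xs$ denotes the character mod $p^r$ induced by $\xs$. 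On the other hand $h_j=\sum_{\xs}c_{\xs}^{(j)}\xs$. Since $|G_j|=\varphi(p^r)/\varphi(p^j)$, Corollary~\ref{rdFourier} (with the roles $q=p^r$, $d=p^j$) gives exactly $\varphi(p^r)c_{\tilde\xs}^{(r)}=\varphi(p^j)c_{\xs}^{(j)}$, i.e. $|G_j|\,c_{\tilde\xs}^{(r)}=c_{\xs}^{(j)}$ for every odd $\xs$ mod $p^j$. Comparing the two expansions termwise yields
\[
\frac{1}{|G_j|}\sum_{b\in G_j}h_r(ab)=\sum_{\xs}c_{\tilde\xs}^{(r)}\xs(a)=\frac{1}{|G_j|}\sum_{\xs}c_\xs^{(j)}\xs(a)=\frac{1}{|G_j|}h_j(a\bmod p^j),
\]
which is the asserted formula after clearing the factor $1/|G_j|$.

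The one subtlety — and the step I would be most careful about — is bookkeeping with the normalization of Fourier coefficients and the precise form of Corollary~\ref{rdFourier}: that corollary is phrased with the coefficients $c^{(r)}_\chi$, $c^{(d)}_{\xs}$ of $h_r$, $h_d$ themselves (and the identity $\varphi(q)c_\chi^{(r)}=\varphi(d)c_{\xs}^{(d)}$), so I must make sure the factor $\varphi(p^r)/\varphi(p^j)=|G_j|$ produced by the projector $\sum_{b\in G_j}b$ (as opposed to the averaging projector $e_{G_j}$) cancels exactly against the ratio of Euler $\varphi$-values, leaving a clean identity with the \emph{un}-averaged sum $\sum_{b\in G_j}h_r(ab)$ on the right — which is what the statement asks for. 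Once that constant is tracked correctly there is nothing further to do; one should also note in passing that even characters $\xs$ mod $p^j$ (those with $\xs(-1)=1$) contribute $0$ to both sides since $h_r$ and $h_j$ are odd, so restricting the character sums to odd characters is harmless.
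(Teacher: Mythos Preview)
Your argument is correct and is essentially the same as the paper's: both proofs expand $h_r$ in characters of $(\Z/p^r\Z)^{*}$, use that $\sum_{b\in G_j}\chi(b)$ equals $|G_j|=\varphi(p^r)/\varphi(p^j)$ or $0$ according as $\chi$ is or is not induced from $(\Z/p^j\Z)^{*}$, and then invoke the coefficient identity $\varphi(p^r)c_{\tilde\xs}^{(r)}=\varphi(p^j)c_{\xs}^{(j)}$ from Corollary~\ref{rdFourier}. Your write-up simply spells out the bookkeeping (the projector $e_{G_j}$ and the cancellation of the $|G_j|$ factor) that the paper leaves implicit.
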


\begin{proof}
If $\chi$ is a character that does not kill $G_j$ (i.e., is not induced from
$(\Z/p^j\Z)^{*}$ then $\sum_{b\in G_j}\chi(b)=0$. If $\chi$ is a character that
 kills $G_j$ (i.e., is induced from $(\Z/p^j\Z)^{*}$) then $\sum_{b\in
G_j}\chi(b)=\#(G_j)=\varphi(p^r)/\varphi(p^j)$. Now the result follows from the
last assertion of Corollary \ref{rdFourier}.

\begin{exs}
If $q=p\equiv 3 \pmod 4$, one sees that the Legendre symbol $\chi(a)=(a/p)$
satisfies $(-1/p)=-1$. It is also known (\cite[Theorem 1, page 75]{Ire},
\cite[formula (19), page 51]{dav})  that
\begin{gather}
  \label{eq:g1}
  \tau_p(\bar{\chi})=\tau_p(\chi)=\sqrt{-p}\:,\\
  \label{eq:g2}
  S(\bar{\chi})=S(\chi)=-p\, \h_p\:,
\end{gather}
where $\h_p$ denotes the class number of the imaginary quadratic field
$\Q(\sqrt{-p})$. Combining (\ref{eq:l1}) and (\ref{eq:g2}), we get
\begin{equation}
    \label{eq:g3}
  L(1,\chi)=\pi \h_p/\sqrt{p}\:.
\end{equation}
Alternatively, one could also deduce (\ref{eq:g3}) by combining the formulas
\cite[(17), page 50]{dav}, \cite[(3), page 45]{dav}, and the formula at the
bottom of page 52 of \cite{dav}.

Therefore, in the case $q=p\equiv 3 \pmod 4$, one has a closed formula
\begin{equation}
  \label{eq:p3}
 \sum_{1 \leq a \leq p-1}\:
  h_1(a)\left(\frac{a}{p}\right)=
  \left(n-\left(\frac{n}{p}\right)\right)\h_p \,.
\end{equation}

\end{exs}

\end{proof}

%%%%%%%%%%%%%%%%%%%%%%%%%%%%%%%%%%%%%%%%%%%%%%%
\section{Semilinear algebra}
\label{semilinear}

Let $Q$ be a field of characteristic zero and $C$ an algebraically closed field
that contains $Q$. We write $\Aut(C/Q)$ for the group of all automorphisms of
$C$ that act identically on $Q$. It is well known that $Q$ coincides with the
subfield of $\Aut(C/Q)$-invariants in $C$. Let $V$ be a $Q$-vector space of
finite positive dimension $n$ and $V_C=V\otimes_Q C$ the corresponding
$n$-dimensional $C$-vector space. Further we will identify $V$ with the
$Q$-vector subspace $V\otimes 1$ of $V_C$. The group $\Aut(C/Q)$ acts on $V_C$
by $C$-semilinear automorphisms:
$$\sigma(v\otimes c)=v\otimes \sigma(c) \ \forall \ \sigma \in \Aut(C/Q), v \in
V, c\in C.$$ Clearly, $V$ coincides with the $Q$-vector subspace of
$\Aut(C/Q)$-invariants in $V_C$.

The following assertion seems to be  well-known but we were unable to find a
reference. (However, see \cite{KolchinLang}.)

\begin{prop}
Let $\tilde{W}$ be an $\Aut(C/Q)$-invariant $C$-vector subspace of $V_C$. Then
there exists a $Q$-vector subspace $W$ of $V$ such that $\tilde{W}$ coincides
with $W_C=W\otimes_Q C \subseteq V\otimes_Q C=V_C$. In addition, $W$
coincides with the $Q$-subspace of $\Aut(C/Q)$-invariants in $\tilde{W}$.
\end{prop}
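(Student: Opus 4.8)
The plan is to exhibit $W$ explicitly as the $Q$-subspace of $\Aut(C/Q)$-invariants of $\tilde W$ and then verify that the complexification of this $W$ recovers all of $\tilde W$. So first I would set $W:=\tilde W\cap V=\tilde W^{\Aut(C/Q)}$ (the second equality being a consequence of the fact that $V=V_C^{\Aut(C/Q)}$). It is immediate that $W_C=W\otimes_Q C\subseteq\tilde W$, since $\tilde W$ is a $C$-vector subspace containing $W$. The entire content is the reverse inclusion $\tilde W\subseteq W_C$, equivalently $\dim_C\tilde W\le\dim_Q W$.

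To prove this, I would argue by descending induction on $\dim_C(V_C/\tilde W)$, or, more cleanly, directly as follows. Choose a $Q$-basis $v_1,\dots,v_n$ of $V$, so it is also a $C$-basis of $V_C$. Suppose $\tilde W\ne W_C$; pick $w\in\tilde W$ whose expansion $w=\sum_{i=1}^n c_i v_i$ involves the fewest nonzero coefficients among all elements of $\tilde W\setminus W_C$ — say the support is $\{i_1,\dots,i_k\}$ with $c_{i_1}=1$ after scaling (scaling is legitimate since $\tilde W$ is a $C$-subspace). For any $\sigma\in\Aut(C/Q)$, the element $\sigma(w)-w$ lies in $\tilde W$ (by $\Aut(C/Q)$-invariance of $\tilde W$), has support contained in $\{i_2,\dots,i_k\}$ (the $i_1$-coefficient cancels because $\sigma(1)=1$), hence strictly smaller support; by minimality it must lie in $W_C$. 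But an element of $W_C$ with support in a proper subset of a basis, combined with a short further descent argument, forces $\sigma(c_j)=c_j$ for all $j$ and all $\sigma$ — here one uses that $W_C\cap(\text{span of }v_{i_2},\dots,v_{i_k})$ is itself $\Aut(C/Q)$-invariant with a spanning set of invariant vectors, so $\sigma(w)-w\in W_C$ actually has all coefficients fixed by every $\sigma$. Since $Q=C^{\Aut(C/Q)}$, this means all $c_j\in Q$, so $w\in V\cap\tilde W=W$, contradicting $w\notin W_C$. Hence $\tilde W=W_C$.

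The cleanest way to run the last step without circularity is to induct on $k=\dim_C\tilde W$: the statement is trivial for $k=0$, and for the inductive step, after producing $w$ with coefficient $1$ in position $i_1$ and applying $\sigma$, the differences $\sigma(w)-w$ span (over $C$) an $\Aut(C/Q)$-invariant subspace $\tilde W'\subsetneq\tilde W$ of dimension $<k$ contained in the span of $v_{i_2},\dots,v_{i_k}$; by the inductive hypothesis $\tilde W'=W'_C$ for a $Q$-subspace $W'$, and in particular $\tilde W'\subseteq V\otimes_Q C$ has a $Q$-rational basis, which forces every $\sigma(w)-w$ to already lie in $W'\subseteq V$, i.e. $\sigma(c_j)-c_j\in Q$ for all $j$; combined with $\sigma(c_{i_1})=c_{i_1}=1$ and a second application of the same reasoning (or simply the observation that $\sigma(c_j)\in Q+Qc_j$ for all $\sigma$ forces $c_j\in Q$, using that any $\sigma$-orbit inside a $2$-dimensional $Q$-space must be finite hence $c_j$ algebraic over $Q$ with all conjugates of the form $c_j+q$, which is impossible for $c_j\notin Q$ since $Q$ has characteristic zero), one concludes $c_j\in Q$.

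The main obstacle is precisely this final descent: ensuring that "$\sigma(w)-w\in$ a $Q$-rational invariant subspace for all $\sigma$" genuinely forces the coefficients of $w$ into $Q$, rather than merely into some algebraic extension. The characteristic-zero hypothesis (ensuring $C^{\Aut(C/Q)}=Q$ and that there are "enough" automorphisms — e.g. $\Aut(C/Q)$ acts transitively on the conjugates of any element algebraic over $Q$, and acts with trivial fixed field) is exactly what makes this work; I would be careful to invoke only that $Q=C^{\Aut(C/Q)}$ together with the standard fact that for $c\in C$ transcendental or of degree $>1$ over $Q$ there is $\sigma\in\Aut(C/Q)$ with $\sigma(c)\ne c$, which is the stated property of $\Aut(C/Q)$ recalled at the beginning of this section. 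The "In addition" clause is then free: $W=\tilde W\cap V$ was how $W$ was defined, and $\tilde W\cap V$ is visibly the $Q$-subspace of $\Aut(C/Q)$-invariants of $\tilde W$.
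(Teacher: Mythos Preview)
Your strategy---setting $W=\tilde W\cap V$ and running a minimal-support argument to derive a contradiction from $\tilde W\not\subseteq W_C$---is sound and differs from the paper's proof, which instead inducts on $n=\dim_Q V$ by intersecting $\tilde W$ with a $Q$-rational coordinate hyperplane and invoking the inductive hypothesis twice. Your route would be the shorter one if carried out correctly.

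The gap is at the key step. From ``$\sigma(w)-w$ lies in $W_C$ (or in $W'_C$), a space with a $Q$-rational basis'' you conclude that $\sigma(w)-w$ already lies in $W'$, equivalently that $\sigma(c_j)-c_j\in Q$ for each $j$. This implication is simply false: any nonzero $U_C$ contains a wealth of elements outside $U$. Your fallback (``the $\sigma$-orbit of $c_j$ sits in the two-dimensional $Q$-space $Q+Qc_j$, hence is finite, hence $c_j$ is algebraic'') also fails: for transcendental $c$ there is $\sigma\in\Aut(C/Q)$ with $\sigma(c)=c+1$, giving an infinite orbit inside $Q+Qc$. What your minimality hypothesis actually buys is stronger and makes all of this unnecessary: if $w\in\tilde W\setminus W_C$ has minimal support $\{i_1,\dots,i_k\}$ among such elements (with $c_{i_1}=1$), then in fact
\[
W\cap\mathrm{span}_Q(v_{i_2},\dots,v_{i_k})=0.
\]
Indeed, given a nonzero $u\in W$ supported in $\{i_2,\dots,i_k\}$, subtract a suitable $C$-multiple of $u$ from $w$ to kill one coordinate; the result still lies in $\tilde W\setminus W_C$ (since $u\in W_C$) with strictly smaller support, contradicting minimality. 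Because $(A\cap B)\otimes_Q C=A_C\cap B_C$ for $Q$-subspaces $A,B\subseteq V$, it follows that $W_C\cap\mathrm{span}_C(v_{i_2},\dots,v_{i_k})=0$. But $\sigma(w)-w$ lies in this intersection, so $\sigma(w)=w$ for every $\sigma$, whence $w\in V\cap\tilde W=W\subseteq W_C$, the desired contradiction. No induction on $\dim_C\tilde W$, no auxiliary $\tilde W'$, and no orbit analysis is needed.
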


\begin{proof}
Let us pick a basis $\{e_1, \dots , e_n\}$ of $V$.
 Let us put $m=\dim_{C}(\tilde{W})$. Clearly, $m \le n$ and we may assume that $m \ge 1$.
 %If $m=n$ then $\tilde{W}=V_C$ and one may put $W=V$.
If $m=1$ then $\tilde{W}$ contains a vector $w'=\sum_{i=1}^n c_i e_i$ such
that, at least, one of its coordinates say, $c_j$ is $1$. Since
$\tilde{W}=C\cdot w'$ is $\Aut(C/Q)$-invariant, we conclude that all
coordinates $c_i$'s  are $\Aut(C/Q)$-invariant and therefore lie in $Q$. This
means that $w' \in V$ and one may put $W=Q\cdot w'$. On the other hand, if
$n=1$ then $m=1$ and we are also done.

We use induction by $n$. Assume that $1<m$ and consider the $C$-subspace
$\tilde{W}_0$ that is the intersection of $\tilde{W}$ and the hyperplane
$\sum_{i=1}^{n-1} C e_i$.
 Clearly, $\tilde{W}_0$ is the $\Aut(C/Q)$-invariant
$C$-vector subspace in  the $(n-1)$-dimensional $\{\sum_{i=1}^{n-1}Q
e_i\}\otimes_Q C$. By induction assumption, there exists a $Q$-vector subspace
$W_0$ of $\{\sum_{i=1}^{n-1}Q e_i\}\otimes_Q C$ such that
$\tilde{W}_0=W_0\otimes_Q C$. If $\tilde{W}=\tilde{W}_0$ then we are done.
 So, assume $\tilde{W}\ne \tilde{W}_0$.
 Then
$\dim_C(\tilde{W}_0)=\dim_C(\tilde{W})-1=m-1>0$. Since
$\dim_Q({W}_0)=\dim_C(\tilde{W}_0)$, we conclude that $\dim_Q({W}_0)=m-1<n-1$.
 Let us choose a $(n-m)$-dimensional $Q$-vector subspace $W_1$ of
$\{\sum_{i=1}^{n-1}Q e_i\}$ such that $\{\sum_{i=1}^{n-1}Q e_i\}=W_0\oplus
W_1$. We have $V=(W_1 \oplus Q e_n)\oplus W_0, \ W_0\otimes _Q C \subset
\tilde{W}$. Notice that $\dim_{Q}(W_1 \oplus Q e_n)=n-(m-1)<n$. Let us consider
the $C$-vector subspace
 $\tilde{W}_2=\tilde{W} \cap \{(W_1 \oplus
Q e_n) \otimes _Q C\}$. Clearly, $\tilde{W}=\tilde{W}_0\oplus \tilde{W}_2$. By
induction assumption applied to the $\Aut(C/Q)$-invariant $C$-subspace
$\tilde{W}_2$ of $(W_1 \oplus Q e_n) \otimes _Q C$, we conclude that there
exists a $Q$-vector subspace $W_2 \subset W_2 \oplus Q e_n$ such that
$\tilde{W}_2=W_2\otimes_Q C$. Now we may put $W=W_0\oplus W_2$.

\end{proof}

%%%%%%%%%%%%%%%%%%%%%%%%%%%%%%%%%%%%%%%%

\end{document}